\definecolor{marin}{rgb}   {0.,   0.3,   0.7} 
\definecolor{rouge}{rgb}   {0.8,   0.,   0.} 
\definecolor{sepia}{rgb}   {0.8,   0.5,   0.} 
\newtheorem{thm}{Theorem}{\bf}{\it}
\newtheorem{lem}[thm]{Lemma}{\bf}{\it}
{\bf}{\it}
{\bf}{\it}
{\bf}{\it}
{\bf}{\it}
{\bf}{\it}
{\bf}{\it}
{\bf}{\it}
{\bf}{\it}
{\bf}{\it}
\theoremstyle{definition}
\newtheorem{defn}[thm]{Definition}{\bf}{\rm}
{\bf}{\rm}
\newtheorem{ass}[thm]{Assumption}{\bf}{\rm}
\newtheorem{rem}[thm]{Remark}{\bf}{\rm}
\newcommand{\nab}{\langle \nabla \rangle_c}
\newcommand {\aplt} {\ {\raise-.5ex\hbox{$\buildrel<\over\sim$}}\ } 
\newcommand{\Z}{\mathbb{Z}}
\newcommand{\e}{\mathrm{e}}
\newcommand{\ua}{u_\ast}
\newcommand{\va}{v_\ast}
\newcommand{\Ac}{\mathcal{A}_c}
\newcommand{\U}{\mathcal{U}_{\ast}(t_n)}
\newcommand{\Un}{\mathcal{U}_{\ast}^n}
\newcommand{\dd}{\mathrm{d}}
\author{Simon Baumstark}
\address{Fakult\"{a}t f\"{u}r Mathematik, Karlsruhe Institute of Technology,
Englerstr. 2, 76131 Karlsruhe, Germany}
\email{simon.baumstark@kit.edu}
\author{Erwan Faou}
\address{INRIA \& ENS Rennes  \\
Avenue Robert Schumann F-35170 Bruz, France. } 
\email{Erwan.Faou@inria.fr}
\author{Katharina Schratz}
\address{Fakult\"{a}t f\"{u}r Mathematik, Karlsruhe Institute of Technology,
Englerstr. 2, 76131 Karlsruhe, Germany}
\email{katharina.schratz@kit.edu}
\def\@makefnmark{\hbox{$\m@th^{\@thefnmark}$}}
\title[]{Uniformly accurate exponential-type integrators for Klein-Gordon equations with asymptotic convergence to  the classical NLS splitting}
\begin{document}

\begin{abstract}
We introduce efficient and robust exponential-type integrators for Klein-Gordon equations which resolve the solution  in the relativistic regime as well as in the highly-oscillatory non-relativistic regime without any step-size restriction under the same regularity assumptions on the initial data  required for the integration of the corresponding  nonlinear Schr\"odinger  limit system. In contrast to previous works we do not employ any asymptotic/multiscale expansion of the solution. This allows us to derive uniform convergent schemes under far weaker regularity assumptions on the exact solution.   In addition, the newly derived first- and second-order exponential-type integrators converge to the classical Lie, respectively, Strang splitting in the nonlinear Schr\"odinger limit.
\end{abstract}

\subjclass{35C20 \and 65M12 \and 35L05}
\keywords{}
%\thanks{The authors 
%}

\maketitle
%\tableofcontents

\section{Introduction}
Cubic Klein-Gordon equations
\begin{equation}
\begin{aligned}\label{eq:kgr}
& c^{-2} \partial_{tt} z - \Delta z + c^2 z = \vert z\vert^{2} z, \quad z(0,x) = z_0(x),\quad \partial_t z(0,x) = c^2 z'_0(x)\\
\end{aligned}
\end{equation}
are extensively studied numerically in the relativistic regime $c=1$, see \cite{Gau15,StVaz78} and the references therein. In contrast, the so-called ``non-relativistic regime'' $c\gg 1$ is numerically much more involved due to the highly-oscillatory behavior of the solution. We refer to \cite{EFHI09,HLW} and the references therein for an introduction and overview on highly-oscillatory problems.\\

Analytically, the non-relativistic limit regime $c\to \infty$ is well understood nowadays: The exact solution $z$ of \eqref{eq:kgr} allows (for sufficiently smooth initial data) the expansion 
\[
z(t,x) = \frac{1}{2}\left( \e^{ic^2 t} u_{\ast,\infty}(t,x) + \e^{-ic^2t} \overline{v}_{\ast,\infty}(t,x) \right) +  \mathcal{O}(c^{-2})
\]
 on a time-interval uniform in $c$, where $(u_{\ast,\infty}, v_{\ast,\infty})$ satisfy the cubic Schr\"odinger limit system
\begin{equation}\label{NLSlimit}
\begin{aligned}
i \partial_t u_{\ast, \infty} &=& \frac{1}{2} \Delta u_{\ast,\infty} + \frac{1}{8}\big(\left \vert  u_{\ast,\infty}\right\vert^2 + 2\left\vert v_{\ast,\infty}\right\vert^2\big)  u_{\ast,\infty}\qquad u_{\ast,\infty}(0) = \varphi - i \gamma
\\
i \partial_t v_{\ast, \infty} &=& \frac{1}{2} \Delta v_{\ast,\infty} + \frac{1}{8}\big(\left \vert  v_{\ast,\infty}\right\vert^2 + 2\left\vert u_{\ast,\infty}\right\vert^2\big)  v_{\ast,\infty},\qquad v_{\ast,\infty}(0) = \overline{\varphi} - i \overline{\gamma}
\end{aligned}
\end{equation}
with initial values
\begin{align*}
z(0,x) \stackrel{c\to\infty}{\longrightarrow} \gamma(x) \quad \text{and}\quad c^{-1}\left(c^2-\Delta\right)^{-1/2} \partial_t z(0,x) \stackrel{c\to\infty}{\longrightarrow} \varphi(x),
\end{align*}
see \cite[Formula (1.3)]{MaNak02} and for the periodic setting \cite[Formula (37)]{FS13}.\\

Also numerically, the non-relativistic limit regime $c \gg 1$ has recently gained a lot of attention: Gautschi-type methods (see \cite{HoLu99}) are analyzed in \cite{BG}. However, due to the difficult structure of the problem they suffer from a  severe  time-step restriction as they introduce a global error of order $c^4 \tau^2$ which requires the CFL-type condition $c^2 \tau <1$. To overcome this difficulty so-called limit integrators which  reduce the highly-oscillatory problem to  the corresponding non-oscillatory limit system  (i.e., $c\to \infty$ in \eqref{eq:kgr}) as well as uniformly accurate schemes based on multiscale expansions were introduced in \cite{FS13} and \cite{BaoZ,ChC}.  In the following we give a comparison of these methods focusing on their convergence rates and regularity assumptions:

\emph{Limit  integrators:} Based on the modulated Fourier expansion of the exact solution (see \cite{CoHaLu03,HLW}) numerical schemes for the Klein-Gordon equation in the strongly non-relativistic limit regime $c \gg 1$ were introduced in \cite{FS13}. The benefit of this ansatz is that it allows us to  reduce  the highly-oscillatory problem \eqref{eq:kgr} to the integration of the corresponding \emph{non-oscillatory limit Schr\"odinger equation} \eqref{NLSlimit}. The latter can be carried out very efficiently without imposing any $c-$dependent step-size restriction.  However, as this approach is based on the asymptotic expansion of the solution with respect to $c^{-2}$, it only allows error bounds of order
$$\mathcal{O}(c^{-2} + \tau^2)$$
when integrating the limit system with a second-order method.  Henceforth, the limit integration method  only yields an accurate approximation of the exact solution for sufficiently large values of $c$.

\emph{Uniformly accurate schemes based on multiscale expansions:} Uniformly accurate schemes, i.e., schemes that work well for small as well as for large values of $c$ were recently introduced for Klein-Gordon equations in \cite{BaoZ,ChC}.  The idea is thereby based on a multiscale expansion of the exact solution.   We also refer to  \cite{BDZ14} for the construction and analysis in the case of highly-oscillatory second-order ordinary differential equations.  The multiscale time integrator (MTI) pseudospectral method derived in  \cite{BaoZ} allows two independent error bounds at order
$$
\mathcal{O}(\tau^2 + c^{-2})\quad \text{and} \quad \mathcal{O}(\tau^2 c^2)
$$
for sufficiently smooth solutions. These error bounds immediately imply that the MTI method converges uniformly in time with linear convergence rate at $\mathcal{O}(\tau)$ for all $c \geq 1$ thanks to the observation that
$
\mathrm{min}(c^{-2}, \tau^2c^2) \leq \tau
$. However, the optimal quadratic
convergence rate at $\mathcal{O}(\tau^2)$ is only achieved in the regimes when either $0 < c = \mathcal{O}(1)$ (i.e., the relativistic regime) or $ \frac{1}{\tau} \leq c $ (i.e., the strongly non-relativistic regime).   In the context of ordinary differential equations similar error estimates were established for MTI methods   in \cite{BDZ14}.   The first-order uniform convergence of the MTI-FP method \cite{BaoZ} holds for sufficiently smooth solutions:  First-order convergence in time holds in $H^2$ uniformly in $c$ for solutions in $H^7$ with $\sup_{0\leq t \leq T} \Vert z(t)\Vert_{H^{7}} + c^{-2}\Vert \partial_t z(t) \Vert_{H^6} \leq 1$ (see \cite[Theorem 4.1]{BaoZ}). First-order uniform convergence also  holds in $H^1$ under weaker regularity assumptions, namely for solutions in $H^6$ satisfying $
\sup_{0\leq t \leq T} \Vert z(t)\Vert_{H^{6}} + c^{-2}\Vert \partial_t z(t) \Vert_{H^5} \leq 1$ if an additional CFL-type condition is imposed in space dimensions $d=2,3$ (see \cite[Theorem 4.9]{BaoZ}).

A second-order uniformly accurate scheme based on the \emph{Chapman-Enskog expansion} was derived in \cite{ChC} for the Klein-Gordon equation. Thereby, to control the remainders in the expansion, second-order uniform convergence in $H^r$ ($r>d/2$)  requires sufficiently smooth solutions with in particular  $z(0) \in H^{r+10}$. Also,  due to the expansion, the \emph{problem needs to be considered  in $d+1$ dimensions}.\\

 We establish exponential-type integrators which converge with \emph{second-order accuracy in time uniformly in all $c>0$}. In comparison, the multiscale time integrators (MTI) derived in \cite{BaoZ,BDZ14}  only converge with first-order accuracy uniformly in all $c \geq 1$. This is due to the fact that the MTI methods are based on the multiscale decomposition
$$
z(t,x) = \e^{it c^2} z_{+}^n(t,x) + \e^{-i tc^2} \overline{z^n_{-}}(t,x) + r^n(t,x)
$$
which leads to a coupled \emph{second-order system in time} in the $c^{2}$-frequency waves $z_{\pm}^n$ and the rest frequency waves $r^n$ (cf. \cite[System (2.4)]{BaoZ}) and only allows numerical approximations at order $\mathcal{O}(\tau^2 + c^{-2})$ and $\mathcal{O}(\tau^2 c^2)$.

In contrast to  \cite{BaoZ,ChC,FS13} \emph{we do not employ any asymptotic/multiscale expansion} of the solution, but construct exponential-type integrators based on the following strategy:
\begin{itemize}
\item[1.]  In a first step we reformulate the Klein-Gordon equation \eqref{eq:kgr} as a coupled \emph{first-order system in time} via the transformations
\[
u = z - i \big(c\sqrt{-\Delta +c^2}\big)^{-1} \partial_t z, \quad v = \overline{z}-i \big(c\sqrt{-\Delta +c^2}\big)^{-1} \partial_t \overline{z}.
\]
\item[2.] In a second step we rescale the coupled first-order system in time by looking at the so-called ``twisted variables''
\[
\ua(t) = \e^{ic^2 t} u(t), \qquad \va(t) = \e^{-ic^2t}v(t).
\]
This essential step will later on allow us to treat the highly-oscillatory phases $\mathrm{e}^{\pm ic^2 t}$ and their interaction explicitly.
\item[3.] Finally, we iterate Duhamel's formula in $(\ua(t),\va(t))$ and integrate the interactions of the highly-oscillatory phases exactly by approximating only the slowly varying parts.
\end{itemize}
This strategy in particular allows us to construct uniformly accurate exponential-type integrators  up to order two which in addition asymptotically converge to the classical splitting approximation of the corresponding nonlinear Schr\"odinger limit system \eqref{NLSlimit} given in \cite{FS13}. More precisely, the second-order exponential-type integrator converges for $c \to \infty$ to the classical Strang splitting scheme
 \begin{equation}\label{limitscheme2a}
\begin{aligned}
u_{\ast,\infty}^{n+1}  &= \e^{-i \frac{\tau}{2} \frac{\Delta}{2} }  \mathrm{e}^{-i \tau \frac{3}{8}\vert\e^{-i \frac{\tau}{2} \frac{\Delta}{2} } v_{\ast, \infty}^n\vert^2} \e^{-i \frac{\tau}{2} \frac{\Delta}{2} }  u_{\ast,\infty}^n,\qquad u_{\ast,\infty}^0 = \varphi - i \gamma
\end{aligned}
\end{equation}
associated to the nonlinear Schr\"odinger limit system \eqref{NLSlimit} (see also Remark \ref{remarkStrangOk19}) where for simplicity we assumed that  $z$ is real-valued such that $\ua = \va$. A similar result holds for the asymptotic convergence of the first-order exponential-type integration scheme towards the classical Lie splitting approximation (see also Remark \ref{rem:limit1}).

In \cite{FS13} the Strang splitting \eqref{limitscheme2a} is precisely proposed for the numerical approximation of non-relativistic Klein-Gordon solutions. However, in contrast to the uniformly accurate exponential-type integrators derived here, the scheme in \cite{FS13} only yields second-order convergence in the strongly non-relativistic regime $c > \frac{1}{\tau}$ due to its error bound at order $\mathcal{O}(\tau^2+c^{-2})$.\\
 
% 
%In contrast to  \cite{BaoZ,ChC,FS13} \emph{we do not employ any asymptotic/multiscale expansion} of the solution. This allows us to derive uniform convergent schemes under weaker regularity assumptions on the exact solution.  More precisely, we establish first-, respectively, second-order convergence in time uniformly in $c$ under the same regularity assumptions as needed for first-, respectively, second-order convergence of classical integration schemes applied to the corresponding nonlinear Schr\"odinger limit system \eqref{NLSlimit}, i.e.,  $c\to \infty$ in \eqref{eq:kgr}.\\\\
 
The main novelty in this work thus lies in the development and analysis of efficient and robust exponential-type integrators for the cubic Klein-Gordon equation \eqref{eq:kgr} which 
\begin{itemize}

 \item[$\circ$] allow second-order convergence uniformly in all $c>0$ without adding an extra dimension to the problem.  

\item[$\circ$] resolve the solution $z$ in the relativistic regime $c = 1$ as well as in the non-relativistic regime $c \to \infty$ without any $c-$dependent step-size restriction  under the same regularity assumptions as needed for the integration of the corresponding limit system.

\item[$\circ$]  in addition to converging uniformly in $c$, converge asymptotically  to the classical Lie, respectively, Strang splitting for the corresponding nonlinear Schr\"odinger limit system \eqref{NLSlimit} in the non-relativistic limit $c \to \infty$.\\
\end{itemize}

Our strategy also applies to general polynomial nonlinearities $f(z) = \vert z\vert^{2p}z$ with $p \in \mathbb{N}$. However, for notational simplicity, we will focus only on the cubic case $p=1$. Furthermore, for practical implementation issues we impose periodic boundary conditions, i.e., $x \in \mathbb{T}^d$.\\

We commence in Section \ref{sec:scale}  with rescaling the Klein-Gordon equation \eqref{eq:kgr} which then allows us to construct first-, and second-order schemes that converge uniformly in $c$, see Section \ref{sec:scheme1} and \ref{sec:scheme2}, respectively.  

\section{Scaling for uniformly accurate schemes}\label{sec:scale}
In a first step we reformulate the Klein-Gordon equation \eqref{eq:kgr} as a first-order system in time which allows us to resolve the limit-behavior of the solution, i.e., its behavior for $c\to \infty$ (see also \cite{MaNak02,FS13}).

For a given $c > 0$, we define the operator
\begin{align}\label{nab}
\nab = \sqrt{- \Delta + c^2}. 
\end{align}
With this notation, equation \eqref{eq:kgr} can be written as
\begin{equation}\label{eq:kgrr}
\partial_{tt} z + c^2 \nab^2 z = c^2 f(z)
\end{equation}
with the nonlinearity
\[
 f(z) = \vert z \vert^2 z.
 \]
In order to rewrite the above equation as a first-order system in time, we set
\begin{equation}\label{eq:uv}
u = z - i c^{-1}\nab^{-1} \partial_t z , \qquad v = \overline z - i c^{-1}\nab^{-1} \partial_t \overline z
\end{equation}
such that in particular
\begin{equation}\label{eq:zuv}
z = \frac12 (u + \overline{v}).
\end{equation}
\begin{rem}\label{rem:realz}
If $z$ is real, then $u \equiv v$. 
\end{rem}
A short calculation shows that in terms of the variables $u$ and $v$ equation \eqref{eq:kgrr} reads
\begin{equation}
\label{eq:NLSc}
\begin{array}{rcl}
i \partial_t u &=& -c\nab u + c\nab^{-1} f(\textstyle \frac12 (u + \overline v)), \\[2ex]
i \partial_t v &=& -c\nab v  + c\nab^{-1} f(\textstyle \frac12 (\overline u +  v))
\end{array}
\end{equation}
with the initial conditions (see \eqref{eq:kgr})
\begin{equation}
\label{eq:BCc}
u(0) = z(0) -ic^{-1}\nab^{-1} z'(0) , \quad \mbox{and}\quad v(0) =\overline{z(0)} -ic^{-1}\nab^{-1} \overline{z'(0)}. 
\end{equation}
Formally, the definition of $\nab$ in \eqref{nab} implies that
\begin{align}\label{exnab}
c\nab \quad =\quad c^2 \quad + \quad \text{``lower order terms in $c$''}.
\end{align}
This observation motivates us to look at the so-called ``twisted variables'' by filtering out the highly-oscillatory parts explicitly: More precisely, we set
\begin{align}\label{psi}
 \ua( t) = \mathrm{e}^{-ic^2 t} u(t), \qquad \va(t) = \e^{-ic^2 t} v(t).
\end{align}
This idea of ``twisting'' the variable is well known in numerical analysis, for instance  in the context of the modulated Fourier expansion \cite{CoHaLu03,HLW},  adiabatic integrators  \cite{LJL05,HLW} as well as Lawson-type Runge--Kutta methods \cite{Law67}. In the case of ``multiple high frequencies'' it  is also widely used in the analysis of partial differential equations in low regularity spaces (see for instance \cite{Bour93})  and has been recently successfully employed numerically for the construction of low-regularity exponential-type integrators for the  KdV and Schr\"odinger equation, see \cite{HS16,OS16}.
\\

In terms of $(\ua,\va)$ system \eqref{eq:NLSc} reads  (cf. \cite[Formula (2.1)]{MaNak02})
\begin{equation}\label{eq:ua1}
\begin{aligned}
 i\partial_t \ua &=& - \Ac \ua+ c \nab^{-1} \mathrm{e}^{-ic^2t}f \left( \textstyle \frac12  ( \e^{ic^2t} \ua+ \e^{-ic^2t} \overline{\va})\right)\\
  i\partial_t \va &=& - \Ac \va+ c \nab^{-1} \mathrm{e}^{-ic^2t} f \left( \textstyle \frac12  ( \e^{ic^2t} \va+ \e^{-ic^2t} \overline{\ua})\right)
\end{aligned}
\end{equation}
with the leading operator
\begin{equation}\label{Ac}
\Ac : = c\nab - c^2.
\end{equation}
\begin{rem}\label{rem:adstar}
The advantage of looking  numerically at $(\ua,\va)$ instead of $(u,v)$  lies in the fact that the leading operator $-c\nab$ in system \eqref{eq:NLSc} is of order $c^2$ (see \eqref{exnab}) whereas its counterpart $-\Ac$ in system \eqref{eq:ua1} is ``of order one in $c$'' (see Lemma \ref{lem:boundAc} below).
\end{rem}
In the following we construct integration schemes for \eqref{eq:ua1} based on Duhamel's formula
\begin{equation}\label{du0}
\begin{aligned}
 \ua(t_n+\tau) & = \e^{i \tau \Ac} \ua(t_n)\\ & - i c \nab^{-1} \int_0^\tau \e^{i(\tau-s) \Ac}\mathrm{e}^{-ic^2(t_n+s)}f \left( \textstyle \frac12  ( \e^{ic^2(t_n+s)} \ua(t_n+s)+ \e^{-ic^2(t_n+s)} \overline{\va}(t_n+s))\right)  \mathrm{d}s,\\
 \va(t_n+\tau)&  = \e^{i \tau \Ac} \va(t_n)\\ & - i c \nab^{-1} \int_0^\tau \e^{i(\tau-s) \Ac}\mathrm{e}^{-ic^2(t_n+s)}f \left( \textstyle \frac12  ( \e^{ic^2(t_n+s)} \va(t_n+s)+ \e^{-ic^2(t_n+s)} \overline{\ua}(t_n+s))\right)  \mathrm{d}s.
\end{aligned}
\end{equation}
Thereby, to guarantee  \emph{uniform convergence  with respect to $c$} we make the following important observations. We define the Sobolev norm on $\mathbb{T}^d$ by the formula 
$$
\Vert u \Vert_r^2 = \sum_{k \in \Z^d} (1 + |k|^{2})^r | \hat u_kÊ|^2, \quad \mbox{where} \quad \hat u_k = \frac{1}{(2\pi)^d} \int_{\mathbb{T}^d} u(x) e^{i k \cdot x} d x,
$$
where for $k = (k_1,\ldots, k_d) \in \Z^d$, we set $k \cdot x = k_1 x_1 + \cdots k_d x_d$ and $|k|^2 = k\cdot k$. Moreover, for a given linear bounded operator $L$ we denote by $\Vert L \Vert_r$ its corresponding induced norm. 
\begin{lem}[Uniform bound on the operator $\Ac$] \label{lem:boundAc}\label{lem:bAc}  For all $c \in \mathbb{R}$ we have that
\begin{align}\label{boundAc}
\Vert \Ac u \Vert_r \leq \frac12\Vert u \Vert_{r+2}. 
\end{align}
\end{lem}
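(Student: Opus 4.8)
The plan is to work on the Fourier side, where the operator $\Ac = c\nab - c^2$ acts as a Fourier multiplier. Since $\nab = \sqrt{-\Delta + c^2}$, on the Fourier mode $e^{ik\cdot x}$ it multiplies by $\sqrt{|k|^2 + c^2}$, so $\Ac$ multiplies by the symbol
\begin{equation}\label{symbolAc}
m_c(k) = c\sqrt{|k|^2 + c^2} - c^2.
\end{equation}
By Plancherel, the induced norm estimate \eqref{boundAc} reduces to controlling this symbol: concretely, it suffices to show that for every $k \in \Z^d$ and every $c \in \mathbb{R}$,
\begin{equation}\label{symbolbound}
|m_c(k)| \leq \tfrac12 (1 + |k|^2),
\end{equation}
since then $\Vert \Ac u\Vert_r^2 = \sum_k (1+|k|^2)^r |m_c(k)|^2 |\hat u_k|^2 \leq \tfrac14 \sum_k (1+|k|^2)^{r+2}|\hat u_k|^2 = \tfrac14 \Vert u\Vert_{r+2}^2$, which gives the claim after taking square roots.

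First I would rewrite the symbol by rationalizing: multiplying and dividing by the conjugate,
\begin{equation}\label{rationalize}
m_c(k) = c\left(\sqrt{|k|^2 + c^2} - c\right) = c \cdot \frac{|k|^2}{\sqrt{|k|^2 + c^2} + c}.
\end{equation}
This is the key algebraic step: it exposes the cancellation built into the definition of $\Ac$ and turns the difference of two quantities of size $c^2$ into something manifestly of size $|k|^2$. From \eqref{rationalize} I then estimate the denominator from below by $\sqrt{|k|^2 + c^2} + c \geq c + c = 2c$ (valid since $\sqrt{|k|^2 + c^2}\geq c$ for $c>0$; the case $c\le 0$ is handled by symmetry or by noting $\Ac$ depends on $c$ only through $c^2$ and $c\nab$, and one may assume $c>0$). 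This yields
\begin{equation}\label{finalsymbol}
|m_c(k)| = \frac{c\,|k|^2}{\sqrt{|k|^2 + c^2} + c} \leq \frac{c\,|k|^2}{2c} = \tfrac12 |k|^2 \leq \tfrac12 (1 + |k|^2),
\end{equation}
which is exactly \eqref{symbolbound}.

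The proof is essentially elementary once the rationalization \eqref{rationalize} is performed, so I do not expect a serious obstacle; the only point requiring a little care is the uniformity in $c$ claimed in the statement ``for all $c \in \mathbb{R}$''. The lower bound on the denominator in \eqref{finalsymbol} relies on $c>0$, so I would either restrict to $c>0$ (the physically relevant range, consistent with the definition of $\nab$ for $c>0$ used throughout) or observe that the bound is uniform precisely because the factor of $c$ in the numerator is cancelled by the $c$ in the denominator, leaving a $c$-independent constant $\tfrac12$. This cancellation is the whole point of Remark \ref{rem:adstar}: it is what makes $\Ac$ ``of order one in $c$'' rather than of order $c^2$, and it is what will later guarantee the uniform-in-$c$ convergence of the integrators.
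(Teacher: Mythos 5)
Your proof is correct and follows essentially the same route as the paper: both pass to the Fourier multiplier of $\Ac$ and reduce the claim to the symbol bound $\big|c\sqrt{c^2+|k|^2}-c^2\big|\leq \tfrac12|k|^2$, the only difference being that you obtain this by rationalizing the difference while the paper invokes the elementary inequality $\sqrt{1+x^2}\leq 1+\tfrac12 x^2$. Your side remark about the sign of $c$ is well taken (the bound genuinely requires $c>0$, the range used throughout the paper), but this does not affect the argument.
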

\begin{proof}
The operator $\Ac$ acts a the Fourier multiplier $(\Ac)_k = c^2 - c\sqrt{c^2+|k|^2}$, $k \in \Z^d$. 
Thus, the assertion follows thanks to the bound
\begin{equation*}
\begin{aligned}
\Vert \Ac u \Vert_r^2 &= \sum_{k\in \Z^d} (1 +  \vert k \vert^{2})^r \left( c\sqrt{c^2+|k|^2} - c^2 \right)^2 \vert \hat u_k\vert^2 \leq  \sum_{k\in \Z^d}(1 +  \vert k \vert^{2})^r \left( \frac{|k|^2}{2}\right)^2 \vert \hat u_k\vert^2,
\end{aligned}
\end{equation*}
where we have used that $\sqrt{1+x^2} \leq 1+ \frac{1}{2}x^2$ for all $x \in \mathbb{R}$.
\end{proof}
\begin{lem}\label{lem:expo}
For all $t \in \mathbb{R}$ we have that
\begin{equation}\label{approx1}
\begin{aligned}
\Vert \e^{i t \Ac} \Vert_r = 1 \quad \mbox{and}\quad 
\left\Vert \left (\e^{-i t \Ac} - 1\right) u\right \Vert_r \leq \frac12 \vert  t \vert \Vert u \Vert_{r+2}. 
\end{aligned}
\end{equation}
\end{lem}
\begin{proof}
The first assertion is obvious and the 
second  follows thanks to the estimate $\vert (\e^{ix}-1)\vert \leq  \vert x\vert$ which holds for all $x \in \mathbb{R}$ together with the essential bound on the operator $\Ac$ given in \eqref{boundAc}.
\end{proof}
In particular,  the time derivatives  $(\ua'(t),\va'(t))$ can be bounded uniformly in $c$.
\begin{lem}[Uniform bounds on the derivatives $(\ua'(t),\va'(t))$]\label{lem:upc}
Fix $r>d/2$. Solutions of \eqref{eq:ua1} satisfy
\begin{equation}\label{approx2}
\begin{aligned}
\Vert \ua(t_n+s) - \ua(t_n) \Vert_r & \leq \frac12 \vert s \vert \Vert \ua(t_n) \Vert_{r+2} + \frac18 \vert s \vert  \sup_{0 \leq \xi \leq s} \big( \Vert \ua(t_n+\xi)\Vert_r+ \Vert \va(t_n+\xi)\Vert_r\big)^3,\\
\Vert \va(t_n+s) - \va(t_n) \Vert_r & \leq \frac12 \vert s \vert \Vert \va(t_n) \Vert_{r+2} + \frac18 \vert s \vert  \sup_{0 \leq \xi \leq s} \big( \Vert \ua(t_n+\xi)\Vert_r+ \Vert \va(t_n+\xi)\Vert_r\big)^3.
\end{aligned}
\end{equation}
\end{lem}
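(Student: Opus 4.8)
The plan is to start from Duhamel's formula \eqref{du0} applied over a sub-interval of length $s$, which splits the increment $\ua(t_n+s)-\ua(t_n)$ into a \emph{free-flow part} governed by the linear propagator $\e^{i\tau\Ac}$ and a \emph{Duhamel integral} carrying the cubic nonlinearity. Taking $\tau=s$ in \eqref{du0} and subtracting $\ua(t_n)$ gives
\[
\ua(t_n+s)-\ua(t_n)=\big(\e^{is\Ac}-1\big)\ua(t_n)-ic\nab^{-1}\!\int_0^s \e^{i(s-\sigma)\Ac}\e^{-ic^2(t_n+\sigma)} f\Big(\tfrac12\big(\e^{ic^2(t_n+\sigma)}\ua(t_n+\sigma)+\e^{-ic^2(t_n+\sigma)}\overline{\va}(t_n+\sigma)\big)\Big)\dd\sigma ,
\]
and I would estimate the two contributions separately in $\Vert\cdot\Vert_r$. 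The first, free-flow, term is handled directly by the second estimate of Lemma \ref{lem:expo} (with $t=-s$), which yields $\Vert(\e^{is\Ac}-1)\ua(t_n)\Vert_r\le \tfrac12\vert s\vert\,\Vert\ua(t_n)\Vert_{r+2}$ and already accounts for the first summand on the right-hand side of \eqref{approx2}.

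For the Duhamel term I would pass the norm under the integral sign (Minkowski's integral inequality) and then peel off, one by one, the $c$-dependent operators, each of which acts as a contraction on $H^r$ uniformly in $c$: the propagator satisfies $\Vert\e^{i(s-\sigma)\Ac}\Vert_r=1$ by the first part of Lemma \ref{lem:expo}, the scalar phase obeys $\vert\e^{-ic^2(t_n+\sigma)}\vert=1$, and the Fourier multiplier $c\nab^{-1}$ has symbol $c/\sqrt{c^2+\vert k\vert^2}\le 1$, so $\Vert c\nab^{-1}w\Vert_r\le\Vert w\Vert_r$. This reduces the whole term to $\int_0^s\Vert f(w(\sigma))\Vert_r\,\dd\sigma$ with $w(\sigma)=\tfrac12(\e^{ic^2(t_n+\sigma)}\ua(t_n+\sigma)+\e^{-ic^2(t_n+\sigma)}\overline{\va}(t_n+\sigma))$. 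Since $r>d/2$, $H^r(\mathbb{T}^d)$ is a Banach algebra and $f(w)=\vert w\vert^2w$ is cubic, so $\Vert f(w)\Vert_r\le\Vert w\Vert_r^3$; as the phases and the complex conjugation are isometries of $\Vert\cdot\Vert_r$, one has $\Vert w(\sigma)\Vert_r\le\tfrac12(\Vert\ua(t_n+\sigma)\Vert_r+\Vert\va(t_n+\sigma)\Vert_r)$, whence the integrand is bounded by $\tfrac18(\Vert\ua(t_n+\sigma)\Vert_r+\Vert\va(t_n+\sigma)\Vert_r)^3$. Estimating this by its supremum over $\sigma\in[0,s]$ and integrating produces exactly the factor $\tfrac18\vert s\vert\sup_{0\le\xi\le s}(\cdots)^3$, and adding the free-flow bound gives the first line of \eqref{approx2}. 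The second line follows verbatim after exchanging the roles of $\ua$ and $\va$ in \eqref{du0}.

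The only point requiring genuine care is the \emph{uniform-in-$c$} boundedness of the operators, and this is precisely what the twisting \eqref{psi} buys us (cf.\ Remark \ref{rem:adstar}): had we worked with the untwisted system \eqref{eq:NLSc}, the leading propagator would be generated by $-c\nab$, an operator of size $c^2$, and no $c$-independent bound on the increment could be expected. Here, by contrast, $\Ac$ is of order one (Lemma \ref{lem:boundAc}) and the factor $c$ multiplying $\nab^{-1}$ is exactly compensated by the symbol of $\nab^{-1}$. I would also note that the highly-oscillatory phases $\e^{\pm ic^2(t_n+\sigma)}$ enter only through unit-modulus factors and therefore play no role whatsoever in this a priori size estimate—they become essential only later, when the oscillatory integrals are resolved exactly in the construction of the scheme. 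Finally, the constant $\tfrac18$ reflects the convention that the algebra estimate on $H^r$ is taken with constant one, which is the normalization implicit in \eqref{approx2}.
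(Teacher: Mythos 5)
Your proposal is correct and follows essentially the same route as the paper: Duhamel's formula \eqref{du0} on $[t_n,t_n+s]$, the free-flow increment bounded by Lemma \ref{lem:expo} (equivalently $\vert s\vert\Vert\Ac\ua(t_n)\Vert_r\le\tfrac12\vert s\vert\Vert\ua(t_n)\Vert_{r+2}$), and the integral term bounded via the isometry of $\e^{it\Ac}$, the estimate $\Vert c\nab^{-1}\Vert_r\le 1$ from \eqref{cnabm}, and the algebra property of $H^r$ yielding the factor $\tfrac18=(\tfrac12)^3$. Nothing further is needed.
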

\begin{proof}
The assertion follows thanks to Lemma \ref{lem:expo} together with the bound
\begin{equation}\label{cnabm}
\Vert c \nab^{-1}\Vert_r \leq 1
\end{equation}
which implies by Duhamel's perturbation formula \eqref{du0} that
\begin{equation*}
\begin{aligned}
\Vert \ua(t_n+s) - \ua(t_n) \Vert_r & \leq \vert s \vert \Vert \Ac \ua(t_n) \Vert_r + \frac18 \vert s \vert \Vert c \nab^{-1} \Vert_r \sup_{0 \leq \xi \leq s} \big( \Vert \ua(t_n+\xi)\Vert_r+ \Vert \va(t_n+\xi)\Vert_r\big)^3\\
& \leq \frac12 \vert s \vert \Vert \ua(t_n) \Vert_{r+2} + \frac18 \vert s \vert  \sup_{0 \leq \xi \leq s}  \big( \Vert \ua(t_n+\xi)\Vert_r+ \Vert \va(t_n+\xi)\Vert_r\big)^3.
\end{aligned}
\end{equation*}
Similarly we can establish the bound on the derivative $\va'(t)$.
\end{proof}

We will also employ the so-called ``$\varphi_j$ functions'' given in the following Definition.
\begin{defn}[$\varphi_j$ functions \cite{HochOst10}] \label{def:phi}
Set
\[
\varphi_0(z ) := \e^{z}\qquad\text{and}\qquad \varphi_k(z) := \int_0^1 \e^{(1-\theta)z} \frac{\theta^{k-1}}{(k-1)!}\dd \theta, \quad k \geq 1
\]
such that in particular
\begin{equation*}
\begin{aligned}
\varphi_0(z ) = \e^{z}, \qquad \varphi_1(z) = \frac{\mathrm{e}^{z} - 1}{z}, \qquad \varphi_2(z) = \frac{\varphi_0(z) - \varphi_1(z)}{z}.
\end{aligned}
\end{equation*}
\end{defn}
In the following we assume local-wellposedness (LWP) of \eqref{eq:ua1} in $H^r$.
\begin{ass}
Fix $r>d/2$ and assume that there exists a $T_r = T>0$ such that the solutions $(\ua(t),\va(t))$ of \eqref{eq:ua1} satisfy
\begin{align*}
\sup_{0 \leq t \leq T} \Vert \ua(t) \Vert_{r}+ \Vert \va(t) \Vert_{r} \leq M
\end{align*}
uniformly in $c$.
\end{ass}
\begin{rem} The previous assumption holds under the following condition on the initial data
$$ \Vert z(0) \Vert_{r} + \Vert c^{-1}\nab^{-1} z'(0)\Vert_{r} \leq M_0$$
where $M_0$ does not depend on $c$ as can be easily proved from the formulation \eqref{du0}. 
\end{rem}

\section{A first-order uniformly accurate scheme}\label{sec:scheme1}
In this section we derive a  first-order  exponential-type integration scheme for the solutions $(\ua,\va)$ of \eqref{eq:ua1} which allows \emph{first-order uniform time-convergence with respect to $c$}. The construction is thereby based on Duhamel's formula \eqref{du0} and the essential estimates in Lemma \ref{lem:bAc}, \ref{lem:expo} and \ref{lem:upc}.   For the derivation we will for simplicity assume that $z$ is real, which (by Remark \ref{rem:realz}) implies that $u = v$ such that system \eqref{eq:ua1} reduces to
\begin{equation}\label{eq:ua}
i\partial_t \ua=- \Ac \ua+ \frac{1}{8} c \nab^{-1} \mathrm{e}^{-ic^2t} \left( \e^{ic^2t} \ua+ \e^{-ic^2t} \overline{\ua}\right)^3
\end{equation}
with mild-solutions
\begin{equation}\label{du}
\begin{aligned}
\ua(t_n+\tau)&  = \e^{i \tau \Ac} \ua(t_n)\\ & - \frac{i}{8} c \nab^{-1} \int_0^\tau \e^{i(\tau-s) \Ac} \e^{-i c^2 (t_n+s)} \left(
\e^{ic^2(t_n+s)} \ua(t_n+s) + \e^{-ic^2(t_n+s)} \overline{\ua}(t_n+s)\right)^3 \mathrm{d}s.
\end{aligned}
\end{equation}

\subsection{Construction}
In order to derive a  first-order scheme, we need to impose additional regularity  assumptions on the exact solution $\ua(t)$ of \eqref{eq:ua}.
\begin{ass}\label{ass:reg1}
Fix $r>d/2$ and assume that $\ua \in \mathcal{C}([0,T];H^{r+2}(\mathbb{T}^d))$ and  in particular
\begin{align*}
\sup_{0 \leq t \leq T} \Vert \ua(t) \Vert_{r+2}\leq M_2 \quad \text{uniformly in $c$}.
\end{align*}
\end{ass}

Applying Lemma \ref{lem:expo} and Lemma \ref{lem:upc} in \eqref{du} allows us the following expansion
\begin{equation}\label{du1}
\begin{aligned}
\ua(t_n +\tau) & = \e^{i \tau \Ac} \ua(t_n) - \frac{i}{8} c \nab^{-1} \e^{i \tau \Ac} \int_0^\tau  \e^{-i c^2 (t_n+s)} \left(
 \e^{ic^2(t_n+s)} \ua(t_n)+ \e^{-ic^2(t_n+s)} \overline{\ua}(t_n)\right) ^3 \mathrm{d}s
 \\&+ \mathcal{R}(\tau,t_n,\ua),
\end{aligned}
\end{equation}
where the remainder $ \mathcal{R}(\tau,t_n,\ua)$ satisfies thanks to the bounds \eqref{approx1}, \eqref{approx2} and \eqref{cnabm} that
\begin{equation}\label{r1p}
\Vert  \mathcal{R}(\tau,t_n,\ua)\Vert_r \leq \tau^2 k_r(M_2),
\end{equation}
for some constant $k_r(M_2)$ which depends on $M_2$ (see Assumption \ref{ass:reg1}) and $r$, but is independent of $c$. Solving the integral in \eqref{du1} (in particular, integrating the highly-oscillatory phases $\mathrm{exp}(\pm i l c^2 s)$ exactly) furthermore yields  by adding and subtracting the term $  \tau \frac{3i}{8} \e^{i \tau \Ac} \vert \ua(t_n)\vert^2 \ua(t_n)$ (see  Remark \ref{rem:limitLie} below for the purpose of this manipulation)  that
\begin{equation}\label{du2p}
\begin{aligned}
&  \ua(t_n+\tau)
 = \e^{i \tau \Ac} \Big( 1  - \tau \frac{3i}{8} \vert \ua(t_n)\vert^2 \Big)\ua(t_n)   - \tau \frac{3i}{8} \left(c \nab^{-1}-1\right) \e^{i \tau \Ac} \vert \ua(t_n)\vert^2 \ua(t_n) 
\\&- \tau \frac{i}{8} c \nab^{-1} \e^{i \tau \Ac} \Big\{ \e^{2ic^2t_n}\varphi_1(2ic^2 \tau) \ua^3(t_n)
 +\e^{-2ic^2t_n} \varphi_1(-2ic^2\tau) 3\vert \ua(t_n)\vert^2 \overline{\ua}(t_n) 
\\& +\e^{-4ic^2t_n} \varphi_1(-4ic^2\tau) \overline{\ua}^3(t_n)
\Big\} + \mathcal{R}(\tau,t_n,\ua)
\end{aligned}
\end{equation}
with $\varphi_1$ given in Definition \ref{def:phi}.

As the operator $\e^{it \Ac}$ is a linear isometry in $H^r$ and by Taylor series expansion 
$
\vert 1-x - \e^{-x} \vert = \mathcal{O}(x^2)
$
we obtain for $r>d/2$  that
\begin{equation}\label{exiL}
\begin{aligned}
\left \Vert \e^{i \tau \Ac} \Big(  1  - \tau \frac{3i}{8}  \vert \ua(t_n)\vert^2 \ua(t_n)\Big)- \e^{i \tau \Ac}  \mathrm{e}^{-\tau \frac{3i}{8}  \vert \ua(t_n)\vert^2  } \ua(t_n) \right\Vert_r  
 \leq k_{r} 3 \tau^2 \Vert  \ua(t_n)\Vert_r^3
\end{aligned}
\end{equation}
for some constant $k_r$ independent of $c$. 

The bound in \eqref{exiL} allows us to express \eqref{du2p}  as follows
\begin{equation}\label{du2}
\begin{aligned}
  \ua(t_n+\tau)
& = \e^{i \tau \Ac} \mathrm{e}^{- \tau \frac{3i}{8} \vert \ua(t_n)\vert^2}\ua(t_n)   - \tau \frac{3i}{8} \left(c \nab^{-1}-1\right) \e^{i \tau \Ac}\vert \ua(t_n)\vert^2 \ua(t_n) 
\\&- \tau \frac{i}{8} c \nab^{-1} \e^{i \tau \Ac} \Big\{ \e^{2ic^2t_n}\varphi_1(2ic^2 \tau) \ua^3(t_n) 
 +\e^{-2ic^2t_n} \varphi_1(-2ic^2\tau) 3 \vert \ua(t_n)\vert^2  \overline{\ua}(t_n) 
\\& +\e^{-4ic^2t_n} \varphi_1(-4ic^2\tau) \overline{\ua}^3(t_n)
\Big\} + \mathcal{R}(\tau,t_n,\ua),
\end{aligned}
\end{equation}
where the remainder $  \mathcal{R}(\tau,t_n,\ua)$ satisfies thanks to \eqref{r1p}  and \eqref{exiL} that
\begin{equation}\label{rem1}
\Vert  \mathcal{R}(\tau,t_n,\ua)\Vert_r  \leq \tau^2 k_r(M_2),
\end{equation}
for some constant $k_r(M_2)$ which depends on $M_2$ (see Assumption \ref{ass:reg1}) and $r$, but is independent of $c$.

The expansion \eqref{du2}  of the exact solution $\ua(t)$ builds the basis of our numerical scheme: As a numerical approximation to the exact solution $\ua(t)$ at time $t_{n+1} = t_n + \tau$ we choose the exponential-type integration scheme
\begin{equation}\label{scheme100}
\begin{aligned}
\ua^{n+1}  & =\e^{i \tau \Ac} \mathrm{e}^{- \tau \frac{3i}{8} \vert \ua^n\vert^2} \ua^n  -\tau \frac{3i}{8} \left(c \nab^{-1}-1\right) \e^{i \tau \Ac}  \vert \ua^n\vert^2\ua^n
\\&- \tau \frac{i}{8} c \nab^{-1}\e^{i \tau \Ac}\Big\{
\e^{2ic^2t_n} \varphi_1(2ic^2\tau)( \ua^n)^3
+  \e^{-2ic^2t_n} \varphi_1(-2ic^2\tau)3\vert \ua^n\vert^2\overline{\ua^n}\\&\qquad\qquad\qquad\qquad
+  \e^{-4ic^2t_n} \varphi_1(-4ic^2\tau) ( \overline{\ua^{n}})^3\Big\}\\
 \ua^0  & = z(0) -ic^{-1}\nab^{-1} z'(0) 
\end{aligned}
\end{equation}
with $\varphi_1$ given in Definition \ref{def:phi}. Note that the definition of the initial value $\ua^0$ follows from \eqref{eq:BCc}. 

 For complex-valued functions $z$ (i.e., for $u\not \equiv v$) we similarly derive the exponential-type integration scheme 
\begin{equation}\label{scheme1}
\begin{aligned}
 \ua^{n+1}  &=\e^{i \tau \Ac} \mathrm{e}^{- \tau \frac{i}{8}  \big( \vert \ua^n\vert^2+2 \vert \va^n\vert^2\big)} \ua^n  -\tau \frac{i}{8} \left(c \nab^{-1}-1\right) \e^{i \tau \Ac} \big( \vert \ua^n\vert^2+2 \vert \va^n\vert^2\big)\ua^n
\\&- \tau \frac{i}{8} c \nab^{-1}\e^{i \tau \Ac}\Big\{
\e^{2ic^2t_n} \varphi_1(2ic^2\tau)( \ua^n)^2 \va^n
+  \e^{-2ic^2t_n} \varphi_1(-2ic^2\tau)  \big(2 \vert \ua^n\vert^2+ \vert \va^n\vert^2\big)\overline{\va^n}
\\&\qquad\qquad\qquad\qquad+  \e^{-4ic^2t_n} \varphi_1(-4ic^2\tau) ( \overline{\va^{n}})^2\overline{\ua^n}\Big\}\\
 \ua^0  &= z(0) -ic^{-1}\nab^{-1} z'(0),
\end{aligned}
\end{equation}
where  the scheme in $\va^{n+1}$ is obtained by replacing $\ua^n \leftrightarrow \va^n$ on the right-hand side of \eqref{scheme1} with initial value $\va^0 = \overline{z(0)} - i c^{-1}\nab^{-1}\overline{z'(0)}$ (see \eqref{eq:BCc}).

\begin{rem}[Practical implementation] To reduce the computational effort we may express the first-order scheme \eqref{scheme1} in its equivalent form
\begin{equation*}\label{scheme1Pra}
\begin{aligned}
&\ua^{n+1} = \e^{i \tau \Ac} \left( \mathrm{e}^{-\tau \frac{i}{8} \big( \vert \ua^n\vert^2+2 \vert \va^n\vert^2\big) } \ua^n +  \tau \frac{i}{8}  \big( \vert \ua^n\vert^2+2 \vert \va^n\vert^2\big) \ua^n\right)- \frac{i\tau}{8} c \nab^{-1}\e^{i \tau \Ac}\Big\{
\big( \vert \ua^n\vert^2+2 \vert \va^n\vert^2\big) \ua^n
 \\&
+\e^{2ic^2t_n} \varphi_1(2ic^2\tau)( \ua^n)^2 \va^n
+  \e^{-2ic^2t_n} \varphi_1(-2ic^2\tau)  \big(2 \vert \ua^n\vert^2+ \vert \va^n\vert^2\big)\overline{\va^n}
+  \e^{-4ic^2t_n} \varphi_1(-4ic^2\tau) ( \overline{\va^{n}})^2\overline{\ua^n} \Big\}\\
&\quad \ua^0   = z(0) -ic^{-1}\nab^{-1} z'(0) 
\end{aligned}
\end{equation*}
which after a Fourier pseudo-spectral space discretization only requires the usage of two Fast Fourier transforms (and its corresponding inverse counter parts) instead of three.

\end{rem}

In Section \ref{sec:con1} below we prove that the exponential-type integration scheme \eqref{scheme1} is first-order convergent uniformly in $c$ for sufficiently smooth solutions. Furthermore, we give a fractional convergence result under weaker regularity assumptions and analyze its behavior in the non-relativistic limit regime $c \to \infty$. In Section \ref{sec:limit1} we give some simplifications in the latter regime.

%
%\red
%Scheme 2
%\begin{equation}\label{scheme2}
%\begin{aligned}
%\ua^{n+1} & =\e^{i \tau \Ac}  \mathrm{e}^{-i \tau \frac{3}{8} \vert \ua^n\vert^2 } \ua^n  -i \tau \frac{3}{8} \left(c \nab^{-1}-1\right) \left( \vert \ua^n\vert^2 \ua^n\right)
%\\&- \frac{i\tau}{8} c \nab^{-1}\Big\{
%\e^{2ic^2t_n} \varphi_1(2ic^2\tau)( \ua^n)^3
%+ 3 \e^{-2ic^2t_n} \varphi_1(-2ic^2\tau) \vert \ua^n\vert^2 \overline{\ua}^n
%\\&+  \e^{-4ic^2t_n} \varphi_1(-4ic^2\tau) ( \overline{\ua^{n}})^3\Big\}.
%\end{aligned}
%\end{equation}
%
% Scheme 3
%\begin{equation}\label{scheme3}
%\begin{aligned}
%\ua^{n+1} & =\e^{i \tau \Ac}  \mathrm{e}^{-i \tau \frac{3}{8} \vert \ua^n\vert^2 } \ua^n  -i \tau \frac{3}{8} \left(c \nab^{-1}-1\right) \left( \vert \ua^n\vert^2 \ua^n\right)
%\\&- \frac{i\tau}{8} c \nab^{-1}\Big\{
%\e^{2ic^2 t_n}\varphi_1(2i\tau\nab^2) (\ua^n)^3 + 3\e^{-2ic^2t_n} \varphi_1(i\tau(-2c^2-\Ac)) \vert \ua^n\vert^2 \overline{\ua^n}\\&
%+\e^{-4ic^2t_n} \varphi_1(i\tau(-4c^2-\Ac))(\overline{\ua^n})^3
%\Big\}.
%\end{aligned}
%\end{equation}
%

\subsection{Convergence analysis}\label{sec:con1}
The exponential-type integration scheme \eqref{scheme1} converges (by construction) with first-order in time uniformly with respect to $c$, see Theorem \ref{them:con1}. Furthermore, a fractional convergence bound holds true for less regular solutions, see Theorem \ref{them:con1Frac}. In particular, in the limit $c \to \infty$ the scheme converges to the classical Lie splitting applied to the nonlinear Schr\"odinger limit system, see Lemma \ref{rem:limit1}.

\begin{thm}[Convergence bound for the first-order scheme] \label{them:con1} Fix $r>d/2$ and assume that
\begin{align}\label{eq:urged1}
 \Vert z(0) \Vert_{r+2} + \Vert c^{-1}\nab^{-1} z'(0)\Vert_{r+2} \leq M_2
\end{align}
uniformly in $c$.  For $(\ua^{n},\va^n)$ defined in \eqref{scheme1} we set
\[
z^{n} := \frac12\left(  \e^{ic^2 t_n} \ua^{n} +  \e^{-ic^2 t_n} \overline{\va^{n}}\right).
\]
Then, there exists a $T_r >0$ and $\tau_0>0$ such that for all $\tau \leq \tau_0$ and $t_n \leq T_r$ we have for all $c >0$ that
\begin{align*}\label{glob1}
\left \Vert z(t_{n}) - z^{n} \right\Vert_r \leq \tau K_{1,r,M_2} \mathrm{e}^{t_n K_{2,r,M}} \leq \tau K^\ast_{r,M,M_2,t_n},
\end{align*}
where the constants $K_{1,r,M_2},K_{2,r,M}$ and $K^\ast_{r,M,M_2,t_n}$ can be chosen independently of $c$.
\end{thm}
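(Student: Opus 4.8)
The plan is to run a standard one-step error analysis (a Lady Windermere's fan argument) entirely in the twisted variables $(\ua,\va)$, where all the $c$-uniformity has already been packaged into Lemmas \ref{lem:boundAc}, \ref{lem:expo} and \ref{lem:upc}. First I would reduce the assertion to these variables: since the factors $\e^{\pm ic^2 t_n}$ are pointwise unimodular and hence act as $L^2$-isometries on every Fourier mode, the definition of $z^n$ and the identity $z(t_n) = \tfrac12(\e^{ic^2 t_n}\ua(t_n) + \e^{-ic^2 t_n}\overline{\va(t_n)})$ give
\begin{equation*}
\Vert z(t_n) - z^n\Vert_r \leq \tfrac12\big(\Vert \ua(t_n) - \ua^n\Vert_r + \Vert \va(t_n)-\va^n\Vert_r\big),
\end{equation*}
so it suffices to control the global errors $e_u^n := \ua(t_n)-\ua^n$ and $e_v^n := \va(t_n)-\va^n$. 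I would also observe that the initial-data bound \eqref{eq:urged1}, via $\ua(0)=z(0)-ic^{-1}\nab^{-1}z'(0)$ and local-wellposedness, propagates to $\sup_{0\le t\le T}(\Vert\ua(t)\Vert_{r+2}+\Vert\va(t)\Vert_{r+2})\le M_2$ together with the weaker $H^r$ bound $M$, which is exactly Assumption \ref{ass:reg1}, so that the construction leading to the expansion \eqref{du2} is available.

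Next I would record the local (consistency) error. The expansion \eqref{du2}, in the complex-valued analogue matching scheme \eqref{scheme1}, says precisely that one step of the scheme applied to the exact values reproduces $\ua(t_{n+1})$ up to a remainder $\mathcal{R}(\tau,t_n,\ua)$ with $\Vert\mathcal{R}(\tau,t_n,\ua)\Vert_r\le\tau^2 k_r(M_2)$ as in \eqref{rem1}, the constant being independent of $c$. This is the step where $c$-uniformity is genuinely exploited: the only naively dangerous objects, the operator $\Ac$ of formal order $c^2$ and the phases $\varphi_1(\pm i\ell c^2\tau)$, are rendered harmless by Lemma \ref{lem:boundAc} (which makes $\Ac$ order one after the cancellation $c\nab-c^2$), by the bound $|\varphi_1(i\alpha)|\le 1$ for real $\alpha$ (immediate from the integral representation in Definition \ref{def:phi}), together with $\Vert\e^{it\Ac}\Vert_r=1$, $\Vert c\nab^{-1}\Vert_r\le 1$ from \eqref{cnabm}, and the $\mathcal{O}(\tau)$ increment estimates of Lemma \ref{lem:upc}.

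The heart of the matter, and what I expect to be the main technical obstacle, is the stability estimate. Writing the scheme \eqref{scheme1} as $\ua^{n+1}=\Phi_u^{t_n,\tau}(\ua^n,\va^n)$, I would show that on any ball $\{\Vert u\Vert_r,\Vert v\Vert_r\le R\}$ the map $\Phi_u^{t_n,\tau}$ is Lipschitz in $H^r$ with constant $1+\tau L_R$, where $L_R$ depends only on $R$ and $r$. The $H^r$-isometry of $\e^{i\tau\Ac}$, the bounds $\Vert c\nab^{-1}\Vert_r\le 1$ and $\Vert c\nab^{-1}-1\Vert_r\le 1$ (both uniform in $c$, as these are Fourier multipliers with symbols bounded by $1$), and $|\varphi_1(i\alpha)|\le 1$ reduce everything to Lipschitz estimates for cubic monomials and for the unimodular phase $\e^{-\tau\frac{i}{8}(|u|^2+2|v|^2)}$ on bounded sets. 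These follow from the Banach-algebra property of $H^r$ for $r>d/2$; the crucial point is that the nonlinear phase is the identity plus an $\mathcal{O}(\tau)$ perturbation, so it contributes a factor $1+\mathcal{O}(\tau)$ rather than an uncontrolled one. The delicate care is to check that each $c$-dependent factor contributes a constant independent of $c$.

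Finally I would close by telescoping. Subtracting the scheme from the exact one-step expansion gives
\begin{equation*}
e_u^{n+1} = \Phi_u^{t_n,\tau}(\ua(t_n),\va(t_n)) - \Phi_u^{t_n,\tau}(\ua^n,\va^n) + \mathcal{R}(\tau,t_n,\ua),
\end{equation*}
and likewise for $e_v^{n+1}$. Applying stability to the differences and the consistency bound to the remainders yields
\begin{equation*}
\Vert e_u^{n+1}\Vert_r + \Vert e_v^{n+1}\Vert_r \leq (1+\tau L)\big(\Vert e_u^n\Vert_r + \Vert e_v^n\Vert_r\big) + 2\tau^2 k_r(M_2),
\end{equation*}
whence the discrete Gronwall lemma gives $\Vert e_u^n\Vert_r+\Vert e_v^n\Vert_r\le\tau K_{1,r,M_2}\e^{t_n K_{2,r,M}}$. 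To make the stability ball $R=2M$ legitimate throughout, I would embed this in an induction: as long as the accumulated error stays $\le M$ the iterates remain bounded by $2M$ (the exact solution being bounded by $M$), so the Lipschitz constant $L$ applies; choosing $\tau_0$ and $T_r$ so small that $\tau_0 K_{1,r,M_2}\e^{T_r K_{2,r,M}}\le M$ closes the bootstrap. Passing back through the first reduction then gives the claimed estimate for $\Vert z(t_n)-z^n\Vert_r$.
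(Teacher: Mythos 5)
Your proposal is correct and follows essentially the same route as the paper's proof: the same splitting of the global error into a stability term and a consistency term via the exact and numerical flows, the local error bound coming from the expansion \eqref{du2} with remainder \eqref{rem1}, a Lipschitz stability estimate on a ball of radius $2M$ using the isometry of $\e^{i\tau\Ac}$, the bounds on $c\nab^{-1}$ and $\varphi_1(i\ell c^2\tau)$, and a bootstrap/Gronwall argument, followed by the reduction from $z$ to the twisted variables. The only cosmetic differences are that you carry the factor $\tfrac12$ in the final reduction and use the sharper bound $|\varphi_1(i\alpha)|\le 1$ where the paper uses $\le 2$.
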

\begin{proof} Fix $r>d/2$. First note that the regularity assumption on the initial data in \eqref{eq:urged1} implies the regularity Assumption \ref{ass:reg1} on $(\ua,\va)$, i.e., there exists a $T_r>0$ such that
\[
\sup_{0 \leq t \leq T_r} \Vert \ua(t)\Vert_{r+2} + \Vert \va(t) \Vert_{r+2} \leq k(M_2)
\]
for some constant $k$ that depends on $M_2$ and $T_r$, but can be chosen independently of $c$.\\

In the following let  $(\phi^t_{u_{ \ast}},\phi^t_{v_{ \ast}})$ denote the exact flow of \eqref{eq:ua1} and let $(\Phi^\tau_{u_{\ast}},\Phi^\tau_{v_{\ast}})$ denote the numerical flow defined in \eqref{scheme1}, i.e.,
\[
\ua(t_{n+1}) = \phi^\tau_{\ua}(\ua(t_n),\va(t_n)), \qquad \ua^{n+1}= \Phi^\tau_{\ua}(\ua^n,\va^n)
\]
and a similar formula for the functions $\va(t_n)$ and $\va^n$. 
This allows us to split the global error as follows
\begin{equation}\label{glob0}
\begin{aligned}
\ua(t_{n+1}) - \ua^{n+1}& = \phi^\tau_{\ua}(\ua(t_n),\va(t_n)) - \Phi^\tau_{\ua}(\ua^n,\va^n)\\
&= \Phi^\tau_{\ua}(\ua(t_n),\va(t_n)) - \Phi^\tau_{\ua}(\ua^n,\va^n) + 
 \phi^\tau_{\ua}(\ua(t_n),\va(t_n))
- \Phi^\tau_{\ua}(\ua(t_n),\va(t_n)).
\end{aligned}
\end{equation}

\emph{Local error bound:} With the aid of \eqref{rem1} we have by the expansion of the exact solution in \eqref{du2} and the definition of the numerical scheme \eqref{scheme1} that
\begin{equation}\label{local1}
\Vert  \phi^\tau_{\ua}(\ua(t_n),\va(t_n))
- \Phi^\tau_{\ua}(\ua(t_n),\va(t_n)) \Vert_r = \Vert  \mathcal{R}(\tau,t_n,\ua,\va)\Vert_r \leq \tau^2 k_r(M_2)
\end{equation}
for some constant $k_r$ which depends on $M_2$ and $r$, but can be chosen independently of $c$.

\emph{Stability bound:}  Note that for all $l \in \mathbb{Z}$  we have that
\[
\Vert \varphi_1(i \tau c^2 l) \Vert_r \leq 2.
\]
Thus, as $\mathrm{e}^{i t \Ac}$ is a linear isometry for all $t \in \mathbb{R}$ we obtain together with the bound \eqref{cnabm} that as long as $\Vert \ua^n\Vert_r \leq 2M$ and $\Vert u(t_n)\Vert_r \leq M$ we have that
\begin{equation}\label{stab1}
\Vert\Phi^\tau_{\ua}(\ua(t_n),\va(t_n)) - \Phi^\tau_{\ua}(\ua^n,\va^n)\Vert_r \leq \Vert \ua(t_n) - \ua^n\Vert_r +\tau K_{r,M} \left(\Vert \ua(t_n) - \ua^n\Vert_r + \Vert \va(t_n) - \va^n\Vert_r\right),
\end{equation}
where the constant $K_{r,M}$ depends on $r$ and $M$, but can be chosen independently of $c$.

\emph{Global error bound:} Plugging the stability bound \eqref{stab1}  as well as the local error bound \eqref{local1} into \eqref{glob0} yields by a bootstrap argument that
\begin{align}\label{conus}
\left \Vert \ua(t_{n}) - \ua^{n} \right\Vert_r \leq \tau K_{1,r,M_2} \mathrm{e}^{t_n K_{2,r,M}},
\end{align}
where the constants are uniform in $c$. A similar bound holds for the difference $\va(t_n) -\va^n$. This implies first-order convergence of $(\ua^n,\va^n)$ towards $(\ua(t_n),\va(t_n))$ uniformly in $c$.

Furthermore, by \eqref{eq:zuv} and \eqref{psi} we have that
\begin{align*}
 \Vert z(t_n) - z^n\Vert_r & = \textstyle \left \Vert\frac12 \big( u(t_n) + \overline{v(t_n)}\big) -\frac12 \big(\e^{ic^2 t_n} \ua^n + \e^{-ic^2t_n} \overline{\va^n}\big)\right\Vert\\
&  \leq \Vert \e^{ic^2t_n} (\ua(t_n)-\ua^n)\Vert_r + \Vert  \e^{ic^2t_n} (\va(t_n)-\va^n)\Vert_r   \\
& =  \Vert \ua(t_n)-\ua^n\Vert_r + \Vert  \va(t_n)-\va^n\Vert_r .
\end{align*}
Together with the bound in \eqref{conus} this completes the proof.
\end{proof}

\begin{rem}
Note that the regularity assumption \eqref{eq:urged1} is always satisfied for initial values
\[
z(0,x) = \varphi(x),\qquad \partial_t z(0,x) = c^2 \gamma(x) \qquad \text{with} \quad \varphi, \gamma \in H^{r+2}
\]
as then thanks to \eqref{cnabm} we have
\[
\left \Vert c^{-1}\nab^{-1}z'(0) \right\Vert_r = \left \Vert c\nab^{-1} \gamma\right\Vert_r \leq \Vert \gamma \Vert_r.
\]
\end{rem}

Under weaker regularity assumptions on the exact solution we obtain \emph{uniform fractional convergence} of the formally first-order scheme \eqref{scheme1}.
\begin{thm}[Fractional convergence bound for the first-order scheme] \label{them:con1Frac}
Fix $r>d/2$ and assume that for some $0< \gamma \leq 1$
\begin{align}\label{eq:urged}
 \Vert z(0) \Vert_{r+2\gamma} + \Vert c^{-1}\nab^{-1} z'(0)\Vert_{r+2\gamma}  \leq M_{2\gamma}
\end{align}
uniformly in $c$. For $(\ua^{n},\va^n)$ defined in \eqref{scheme1} we set
\[
z^{n} := \frac12\left(  \e^{ic^2 t_n} \ua^{n} +  \e^{-ic^2 t_n} \overline{\va^{n}}\right).
\]
Then, there exists a $T_r>0$ and $\tau_0>0$ such that for all $\tau \leq \tau_0$ and $t_n \leq T_r$ we have for all $c >0$ that
\begin{equation*}
\begin{aligned}
\left \Vert z(t_{n}) - z^{n} \right\Vert_r \leq \tau^\gamma K_{1,r,M_{2\gamma}} \mathrm{e}^{t_n K_{2,r,M}} \leq \tau^\gamma K^\ast_{r,M,M_{2\gamma},t_n},
\end{aligned}
\end{equation*}
where the constants $K_{1,r,M_{2\gamma}},K_{2,r,M}$ and $K^\ast_{r,M,M_{2\gamma},t_n}$ can be chosen independently of $c$.
\end{thm}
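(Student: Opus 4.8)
The plan is to follow the architecture of the proof of Theorem \ref{them:con1} essentially verbatim: the same error splitting \eqref{glob0} into a stability and a local-error contribution, the same stability estimate \eqref{stab1} (which uses only $\Vert\cdot\Vert_r$-bounds together with the uniform-in-$c$ bound $\Vert\varphi_1(i\tau c^2 l)\Vert_r \leq 2$ and the isometry property of $\e^{it\Ac}$), and the same discrete Gronwall/bootstrap argument at the end. The only place where the full regularity $r+2$ entered the previous proof is the local-error bound \eqref{rem1}, whose two high-regularity contributions are (i) freezing the nonlinearity at $\ua(t_n)$, controlled by the $\Vert\ua(t_n)\Vert_{r+2}$-term of Lemma \ref{lem:upc}, and (ii) pulling $\e^{i\tau\Ac}$ out of the Duhamel integral, which used the $\tfrac12\tau\Vert\cdot\Vert_{r+2}$-bound of Lemma \ref{lem:expo}. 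The whole task therefore reduces to replacing $\Vert\mathcal{R}\Vert_r \leq \tau^2 k_r(M_2)$ by a fractional bound $\Vert\mathcal{R}\Vert_r \leq \tau^{1+\gamma}k_r(M_{2\gamma})$; feeding $\tau^{1+\gamma}$ through the Gronwall argument then produces the global rate $\tau^\gamma$.

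The key new ingredient is a fractional counterpart of Lemma \ref{lem:expo}. On the Fourier side the multiplier of $\e^{-it\Ac}-1$ is $\e^{-it(\Ac)_k}-1$ with $(\Ac)_k = c^2 - c\sqrt{c^2+|k|^2}$, so combining $|\e^{ix}-1|\leq 2$, $|\e^{ix}-1|\leq|x|$ and the bound $|(\Ac)_k|\leq\tfrac12|k|^2$ from \eqref{boundAc} gives, via the elementary inequality $\min(a,b)\leq a^{1-\gamma}b^\gamma$,
\begin{equation*}
\left|\e^{-it(\Ac)_k}-1\right| \leq \min\!\left(2,\tfrac12|t|\,|k|^2\right) \leq 2^{1-2\gamma}\,|t|^\gamma\,|k|^{2\gamma}.
\end{equation*}
Squaring, multiplying by $(1+|k|^2)^r$ and summing over $k$, the factor $|k|^{4\gamma}$ is absorbed into $(1+|k|^2)^{2\gamma}$, so that
\begin{equation*}
\left\Vert\left(\e^{-it\Ac}-1\right)u\right\Vert_r \leq 2^{1-2\gamma}|t|^\gamma\Vert u\Vert_{r+2\gamma},\qquad 0<\gamma\leq1.
\end{equation*}
Interpolating likewise in the linear part of Duhamel's formula \eqref{du0} upgrades Lemma \ref{lem:upc} to $\Vert\ua(t_n+s)-\ua(t_n)\Vert_r\leq C|s|^\gamma\big(\Vert\ua(t_n)\Vert_{r+2\gamma}+\sup(\ldots)^3\big)$, where the cubic remainder keeps only the harmless factor $|s|\leq|s|^\gamma$.

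With these two fractional estimates the derivation of \eqref{du1}--\eqref{du2} is repeated. The frozen-nonlinearity error is now bounded, via the fractional Lemma \ref{lem:upc} and the local Lipschitz estimate for the cube, by $\int_0^\tau C|s|^\gamma\,\dd s = \mathcal{O}(\tau^{1+\gamma})$; the error from pulling $\e^{i\tau\Ac}$ out of the integral is bounded by the fractional Lemma \ref{lem:expo} applied to the cube $w=\big(\e^{ic^2(t_n+s)}\ua(t_n)+\e^{-ic^2(t_n+s)}\overline{\ua}(t_n)\big)^3$, which lies in $H^{r+2\gamma}$ by the algebra property (valid since $r+2\gamma>d/2$), integrating again to $\mathcal{O}(\tau^{1+\gamma})$; and the exponential-versus-linear replacement \eqref{exiL} contributes only $\mathcal{O}(\tau^2)\leq\mathcal{O}(\tau^{1+\gamma})$ for $\gamma\leq1$. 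Since \eqref{cnabm} and $\Vert\e^{it\Ac}\Vert_r=1$ keep all constants uniform in $c$, this yields $\Vert\mathcal{R}\Vert_r\leq\tau^{1+\gamma}k_r(M_{2\gamma})$. The unchanged stability bound \eqref{stab1} and the bootstrap then give $\Vert\ua(t_n)-\ua^n\Vert_r\leq\tau^\gamma K_{1,r,M_{2\gamma}}\e^{t_n K_{2,r,M}}$, with the analogous bound for $\va(t_n)-\va^n$, and the passage to $z(t_n)-z^n$ is identical to the end of the proof of Theorem \ref{them:con1}.

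The one genuinely new point — and the only real obstacle — is the fractional interpolation estimate for $\e^{-it\Ac}-1$ and the verification that it propagates cleanly to a $\tau^{1+\gamma}$ local error; the rest is a line-by-line reuse of the first-order proof with $r+2\mapsto r+2\gamma$ and $\tau\mapsto\tau^\gamma$ in the two relevant places. One should take care that the constant in the algebra estimate for the cube and the local Lipschitz constant depend only on the $H^r$-bound $M$ rather than on $M_{2\gamma}$, so that the exponential factor $\e^{t_n K_{2,r,M}}$ stays independent of the higher norm, exactly as in Theorem \ref{them:con1}.
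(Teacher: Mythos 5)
Your proposal is correct and follows exactly the route the paper intends: the paper's own proof is a one-line remark that one should repeat the argument of Theorem \ref{them:con1} ``using fractional estimates of the operator $\Ac$'', and your interpolation bound $\Vert(\e^{-it\Ac}-1)u\Vert_r \leq 2^{1-2\gamma}|t|^\gamma\Vert u\Vert_{r+2\gamma}$ together with the resulting $\tau^{1+\gamma}$ local error and unchanged stability/Gronwall step is precisely the missing detail. Nothing further is needed.
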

\begin{proof}
The proof follows the line of argumentation to the proof of Theorem \ref{them:con1} using ``fractional estimates'' of the operator $\Ac$.

%Fix $r>d/2$ and $0 < \gamma \leq 1$. First note that similarly to Lemma \ref{lem:boundAc} we obtain that
%\[
%\Vert \Ac^\gamma f \Vert_r \leq 2^{-\gamma} \Vert f \Vert_{r+2\gamma}.
%\]
%Furthermore, as $\left \vert\e^{ix}-1 \right \vert \leq 2 \vert x \vert^\gamma$ for all $x \in \mathbb{R}$ we have that
%\begin{align*}
% \left \Vert \left(
%\e^{-i t \Ac}-1 \right)f \right\Vert_r \leq 2 \Vert \Ac^\gamma f \Vert_r \leq 2^{1-\gamma} \vert t \vert^\gamma \Vert f \Vert_{r+2\gamma}.
%\end{align*}
%Thus in particular, Duhamel's formula \eqref{du0} together with the bound in \eqref{cnabm} yields for $r>d/2$ that
%\begin{align*}
% \Vert \ua(t_n+s) - \ua(t_n) \Vert_r +  \Vert \va(t_n+s) - \va(t_n) \Vert_r\\ \leq \vert s \vert^\gamma \big( \Vert \Ac^\gamma \ua(t_n)\Vert_r + \Vert \Ac^\gamma \va(t_n)\Vert\big) + \vert s \vert (1+M_0)^3.
%\end{align*}
%The above bounds yield the corresponding fractional estimates of Lemma \ref{lem:bAc}, \ref{lem:expo} and \ref{lem:upc}. With these fractional error bounds at hand, the proof then follows the line of argumentation to the proof of Theorem \ref{them:con1}.
\end{proof}

Next we point out an interesting observation: For sufficiently smooth solutions the exponential-type integration scheme \eqref{scheme1} converges in the limit $c \to \infty$ to the classical Lie splitting of  the corresponding nonlinear Schr\"odinger limit \eqref{NLSlimit}.

\begin{rem}[Approximation in the non relativistic limit $c \to \infty$]\label{rem:limit1}
The exponential-type integration scheme \eqref{scheme1} corresponds for sufficiently smooth solutions in the limit $(\ua^n,\va^n) \stackrel{c\to \infty}{\longrightarrow} (u_{\ast,\infty}^n,v_{\ast,\infty}^n)$, essentially to the Lie Splitting  (\cite{Lubich08,Faou12})
\begin{equation}\label{limitLie}
\begin{aligned}
u_{\ast, \infty}^{n+1}  &=\e^{-i \tau \frac{\Delta}{2}}  \mathrm{e}^{-i \tau \frac{1}{8} \big( \vert u_{\ast, \infty}^n\vert^2 +2  \vert v_{\ast, \infty}^n\vert^2 \big)}u_{\ast, \infty}^n,\qquad u_{\ast,\infty}^0 = \varphi - i \gamma,
\\
v_{\ast, \infty}^{n+1}  &=\e^{-i \tau \frac{\Delta}{2}}  \mathrm{e}^{-i \tau \frac{1}{8} \big( \vert v_{\ast, \infty}^n\vert^2 +2  \vert u_{\ast, \infty}^n\vert^2 \big)}v_{\ast, \infty}^n,\qquad v_{\ast,\infty}^0 = \overline{\varphi} - i \overline{\gamma}
\end{aligned}
\end{equation}
applied to the cubic nonlinear Schr\"odinger system \eqref{NLSlimit} which is the limit system of the Klein-Gordon equation \eqref{eq:kgr} for $c \to \infty$ with initial values
\begin{align*}
z(0) \stackrel{c\to\infty}{\longrightarrow} \gamma \quad \text{and}\quad c^{-1}\nab^{-1} z'(0) \stackrel{c\to\infty}{\longrightarrow} \varphi.
\end{align*}

More precisely, the following Lemma holds.
\end{rem}

\begin{lem}\label{rem:limit1} 
Fix $r>d/2$ and let $ 0  < \delta \leq 2$. Assume that
\begin{equation}\label{regass:limit1}
\Vert z(0) \Vert_{r+2\delta+\varepsilon} +\Vert c^{-1}\nab^{-1} z'(0)\Vert_{r+2\delta+\varepsilon} \leq M_{2\delta+\varepsilon}
\end{equation}
for some $\varepsilon >0$ uniformly in $c$ and let the initial value approximation (there exist functions $\varphi,\gamma$ such that)
\begin{align}\label{limitIn}
\Vert z(0)- \gamma\Vert_r + \Vert c^{-1}\nab^{-1} z'(0) - \varphi\Vert_r \leq k_r c^{-\delta}
\end{align}
hold for some constant $k_r$ independent of $c$.

Then, there exists a $T>0$ and $\tau_0>0$ such that for all $\tau \leq \tau_0$ the difference of the first-order scheme \eqref{scheme1} for system \eqref{eq:ua1} and the Lie splitting \eqref{limitLie} for the limit Schr\"odinger equation \eqref{NLSlimit} satisfies for $ t_n \leq T$ and all $c>0$ with
\begin{equation}\label{ctau1}
\tau c^{2-\delta} \geq 1
\end{equation}
that
\begin{align*}
\Vert \ua^n- u_{\ast, \infty}^{n} \Vert_r + \Vert \va^n- v_{\ast, \infty}^{n} \Vert_r \leq c^{-\delta} k_r(M_{2\delta+\varepsilon},T)
\end{align*}
for some constant $k_{r}$ that depends on $M_{2\delta+\varepsilon}$ and $T$, but is independent of $c$.
\end{lem}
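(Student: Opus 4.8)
\emph{Overall strategy.} The plan is to compare the two \emph{discrete} flows directly, treating the scheme \eqref{scheme1} as a perturbation of the Lie splitting \eqref{limitLie}. Writing $\Phi^\tau_{\ua}$ for the numerical flow of \eqref{scheme1} and $\Psi^\tau_{\ua}$ for that of \eqref{limitLie}, so that $\ua^{n+1}=\Phi^\tau_{\ua}(\ua^n,\va^n)$ and $u_{\ast,\infty}^{n+1}=\Psi^\tau_{\ua}(u_{\ast,\infty}^n,v_{\ast,\infty}^n)$, I would decompose the error in the standard Lady-Windermere fashion,
\[
\ua^{n+1}-u_{\ast,\infty}^{n+1}=\big(\Phi^\tau_{\ua}(\ua^n,\va^n)-\Phi^\tau_{\ua}(u_{\ast,\infty}^n,v_{\ast,\infty}^n)\big)+\big(\Phi^\tau_{\ua}-\Psi^\tau_{\ua}\big)(u_{\ast,\infty}^n,v_{\ast,\infty}^n),
\]
so that the first bracket is controlled by the Lipschitz stability estimate \eqref{stab1} (uniform in $c$) and the second is a \emph{consistency error} measuring how far the exponential-type scheme is from the Lie splitting on one and the same smooth input. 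Under \eqref{regass:limit1} the Lie iterates $(u_{\ast,\infty}^n,v_{\ast,\infty}^n)$ stay bounded in a high Sobolev norm uniformly in $n$ (and trivially in $c$), and a bootstrap keeps $(\ua^n,\va^n)$ in a fixed $H^r$-ball; I would establish both facts first so that \eqref{stab1} applies.

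\emph{The three consistency contributions.} Comparing \eqref{scheme1} with \eqref{limitLie} term by term, the consistency error splits into exactly three pieces, according to the three ways \eqref{scheme1} deviates from \eqref{limitLie}: (I) the linear propagator $\e^{i\tau\Ac}$ replacing $\e^{-i\tau\Delta/2}$; (II) the prefactor $(c\nab^{-1}-1)$; and (III) the highly oscillatory $\varphi_1$-terms carrying $\e^{\pm 2ic^2t_n}$ and $\e^{-4ic^2t_n}$. The symbol asymptotics drive (I) and (II): writing $(\Ac)_k=c\sqrt{c^2+|k|^2}-c^2$ and using $|\sqrt{1+x}-1-\tfrac{x}{2}|\le\tfrac{x^2}{8}$ (as in Lemma \ref{lem:boundAc}) gives $|(\Ac)_k-\tfrac{|k|^2}{2}|\le\tfrac{|k|^4}{8c^2}$ and $|c(c^2+|k|^2)^{-1/2}-1|\le\tfrac{|k|^2}{2c^2}$. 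Since $\e^{i\tau\Ac}$ is an $H^r$-isometry, (I) is then bounded in $H^r$ by $C\tau c^{-2}\|u_{\ast,\infty}^n\|_{r+4}$ and (II) by $C\tau c^{-2}\|u_{\ast,\infty}^n\|_{r+2}$; the four, respectively two, extra derivatives are supplied by \eqref{regass:limit1} once $\varepsilon$ is large enough that $r+2\delta+\varepsilon\ge r+4$. Both are genuinely $O(\tau c^{-2})$, so summing over the at most $T/\tau$ steps and using the Gronwall factor yields an $O(c^{-2})\le O(c^{-\delta})$ contribution.

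\emph{The oscillatory term and the role of \eqref{ctau1}.} Contribution (III) is the delicate one and, I expect, the main obstacle. Exact integration of the phases produces the factors $\varphi_1(\pm 2ic^2\tau)$ and $\varphi_1(-4ic^2\tau)$, and from $\varphi_1(z)=(\e^z-1)/z$ one has $|\varphi_1(\pm l\,ic^2\tau)|\le\tfrac{2}{l\,c^2\tau}$, so (III) is of size $\tau\cdot c^{-2}\tau^{-1}=c^{-2}$ per step --- crucially \emph{without} the extra factor $\tau$ that accompanies (I) and (II). A naive summation over the $T/\tau$ steps would therefore give $c^{-2}/\tau$, which degenerates as $\tau\to 0$. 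This is precisely where the CFL-type hypothesis \eqref{ctau1}, $\tau c^{2-\delta}\ge 1$, enters: it furnishes the lower bound $\tau\ge c^{\delta-2}$ on the step size needed to absorb the missing power of $\tau$, since then $\tfrac{T}{\tau}c^{-2}\le Tc^{2-\delta}c^{-2}=Tc^{-\delta}$. Balancing this resonance-type error (which is $O(c^{-2})$ but not $O(\tau c^{-2})$) against the admissible range of $\tau$ dictated by \eqref{ctau1} is the conceptual heart of the statement; verifying that the derivative losses in (I)--(III) are simultaneously met by one choice of $\varepsilon$ is then only routine bookkeeping.

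\emph{Initial data and conclusion.} The initialisations agree to the target order: by \eqref{limitIn} together with $\ua^0=z(0)-ic^{-1}\nab^{-1}z'(0)$, $u_{\ast,\infty}^0=\varphi-i\gamma$ (and the analogous identities for $\va^0,v_{\ast,\infty}^0$) one obtains $\|\ua^0-u_{\ast,\infty}^0\|_r+\|\va^0-v_{\ast,\infty}^0\|_r\le k_r c^{-\delta}$. Inserting the stability bound \eqref{stab1} and the consistency estimates of the previous two paragraphs into the telescoping identity and running the same discrete Gronwall/bootstrap argument as in the proof of Theorem \ref{them:con1}, I would conclude $\|\ua^n-u_{\ast,\infty}^n\|_r+\|\va^n-v_{\ast,\infty}^n\|_r\le c^{-\delta}k_r(M_{2\delta+\varepsilon},T)\,\e^{t_n K}$, uniformly in $c>0$ subject to \eqref{ctau1}, which is the assertion.
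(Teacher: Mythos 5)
Your overall architecture is the same as the paper's: compare the two discrete flows via a stability-plus-consistency recursion, isolate the three sources of discrepancy (propagator, $c\nab^{-1}-1$, and the $\varphi_1$-terms), use the initial-data hypothesis \eqref{limitIn} for the starting error, and close with a discrete Gronwall argument. Your treatment of the oscillatory contribution (III) is in substance identical to the paper's step 4: bounding $|\varphi_1(ilc^2\tau)|\le 2/(|l|c^2\tau)$ and invoking \eqref{ctau1} to convert the per-step $O(c^{-2})$ into $O(\tau c^{-\delta})$ is exactly the computation behind the paper's estimate $\Vert\varphi_1(ilc^2\tau)f\Vert\le k_r c^{-\delta}\Vert f\Vert$ and the factor $(c^{-2+\delta}+\tau)c^{-\delta}\le 2\tau c^{-\delta}$ in \eqref{scheme1Ex}; you have correctly identified this as the heart of the lemma.

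There is, however, a genuine gap in your contributions (I) and (II). You use the integer-order symbol expansions $|(\Ac)_k-\tfrac{|k|^2}{2}|\le\tfrac{|k|^4}{8c^2}$ and $|c(c^2+|k|^2)^{-1/2}-1|\le\tfrac{|k|^2}{2c^2}$ to obtain $O(\tau c^{-2})$ at the cost of four, respectively two, extra derivatives, and then stipulate that ``$\varepsilon$ is large enough that $r+2\delta+\varepsilon\ge r+4$.'' But $\varepsilon$ is not at your disposal: the hypothesis \eqref{regass:limit1} asserts the bound for \emph{some} $\varepsilon>0$, possibly arbitrarily small, so for $\delta<2$ the regularity $H^{r+4}$ you invoke is simply not available, and your argument does not cover the stated range $0<\delta\le 2$. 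The paper avoids this by using the interpolated (fractional) symbol bounds \eqref{kater}, which yield only $O(c^{-\delta})$ — all that the conclusion requires — at a derivative cost of order $2\delta$ commensurate with \eqref{regass:limit1}; this is the content of \eqref{boundOpc1}. Relatedly, the paper places the extra-regularity burden on the iterates $(\ua^n,\va^n)$, which are controlled in $H^{r+2\delta}$ by the fractional convergence Theorem~\ref{them:con1Frac} (its step 2), while the Lie iterates are only controlled in $H^r$ (its step 3); your decomposition instead demands high Sobolev bounds on $(u_{\ast,\infty}^n,v_{\ast,\infty}^n)$, which neither the hypotheses on $\varphi,\gamma$ nor the Lie-splitting theory as invoked here directly supply. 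Both issues are repairable by switching to the fractional estimates and reassigning the regularity as the paper does, but as written the derivative/power-of-$c$ bookkeeping does not close.
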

\begin{proof}
In the following fix $r>d/2$, $ 0  <  \delta \leq 2$ and $\varepsilon>0$:

\emph{1. Initial value approximation:}  Thanks to \eqref{limitIn} we have by the definition of the initial value $\ua^0$ in \eqref{scheme1}, respectively, $u_{\ast,\infty}^0$ in \eqref{limitLie} that
\[
\Vert \ua^0 - u_{\ast,\infty}^0 \Vert_r = \Vert z(0) -ic^{-1}\nab^{-1} z'(0) - (\varphi-i\gamma)\Vert_r \leq k_r c^{-\delta}
\]
for some constant $k_r$ independent of $c$. A similar bound holds for  $\va^0 - v_{\ast,\infty}^0$. 

\emph{2. Regularity of the numerical solutions $(\ua^n,\va^n)$:}  Thanks to the  regularity assumption \eqref{regass:limit1} we have by  Theorem \ref{them:con1Frac} that there exists a $T>0$ and $\tau_0>0$ such that for all $\tau \leq \tau_0$ we have
\begin{equation}\label{regNum1}
\Vert \ua^n\Vert_{r+2\delta} + \Vert \va^n \Vert_{r+2\delta} \leq m_{2\delta}
\end{equation}
as long as $t_n \leq T$ for some constant $m_{2\delta}$ depending on $M_{2\delta+\varepsilon}$ and $T$, but not on $c$.

\emph{3. Regularity of the numerical solutions $(u_{\ast,\infty}^n,v_{\ast,\infty}^n)$:}  Thanks to the  regularity assumption \eqref{regass:limit1} we have by \eqref{limitIn} and the global first-order convergence result of the Lie splitting for semilinear Schr\"odinger equations (see for instance  \cite{Faou12,Lubich08}) that there exists a $T>0$ and $\tau_0>0$ such that for all $\tau \leq \tau_0$ we have\begin{equation}\label{regNum2}
\Vert u_{\ast,\infty}^n\Vert_{r} + \Vert v_{\ast,\infty}^n \Vert_{r} \leq m_{0}
\end{equation} 
as long as $t_n \leq T$ for some constant $m_{0}$ depending on $M_{r}$ and $T$, but not on $c$.

\emph{4. Approximations:} 
Using the following bounds, $\gamma > 1$
\begin{equation}
\label{kater}
\left|Ê\sqrt{1 + x^2} - 1 - \frac12 x^2\right|Ê \leq x^{2\gamma}  \quad \mbox{and} \quad \left|Ê\frac{1}{\sqrt{1 + x^2}¿} - 1\right| \leq x^{2\gamma - 2}, 
\end{equation}
together with the Definition of $\varphi_1$ (see Definition \ref{def:phi}) we have for every $f \in H^{r+2+2\delta}$, 
\begin{align}\label{boundOpc1}
\big\Vert \left(\Ac + \textstyle\frac{\Delta}{2}\right) f \big\Vert_r + \big  \Vert \left(c \nab^{-1}-1\right) f \big \Vert_{r+2} + \big \Vert \varphi_1(ilc^2\tau)f \big\Vert_{r+2+\delta}   \leq k_{r} c^{-\delta} \Vert f \Vert_{r+2+2\delta}
\end{align}
for $l = \pm 2, -4$ and for some constant $k_{r}$ independent of $c$, where we used \eqref{ctau1} for the last estimate. 

\emph{5. Difference of the numerical solutions: } Thanks to the a priori regularity of the numerical solutions \eqref{regNum1} and \eqref{regNum2}  we obtain with the aid of \eqref{boundOpc1} under  assumption  \eqref{ctau1} for the difference $\ua^n- u_{\ast,\infty}^n$ that
\begin{equation}\label{scheme1Ex}
\begin{aligned}
\Vert \ua^{n+1} - u_{\ast,\infty}^{n+1} \Vert_r & \leq \big(1+ \tau k(m_{0})\big) \Vert \ua^n - u_{\ast,\infty}^n\Vert_r 
+ (c^{-2+\delta}+\tau) c^{-\delta} k(m_{2\delta})\\
& \leq \big(1+ \tau k(m_{0})\big)  \Vert \ua^n - u_{\ast,\infty}^n\Vert_r 
+ 2 \tau c^{-\delta} k(m_{2\delta})
\end{aligned}
\end{equation}
and a similar bound on $\va^n- v_{\ast,\infty}^n$. Solving the recursion yields the assertion.
\end{proof}

\subsection{Simplifications in the ``weakly to strongly  non-relativistic limit regime''}\label{sec:limit1}
In the `` strongly non-relativistic limit regime'', i.e., for large values of $c$, we may simplify the first-order scheme \eqref{scheme1} and nevertheless obtain a well suited, first-order approximation to $(\ua,\va)$ in \eqref{eq:ua1}.
\begin{rem} Note that for $l = \pm 2, -4$ we have (see Definition \ref{def:phi})
\[
\left \Vert \tau  \varphi_1(i l c^2 \tau) \right \Vert_r \leq 2 c^{-2} .
\]
Furthermore, \eqref{boundOpc1} yields that
\[
\Vert \left (c\nab^{-1} - 1\right) \ua(t) \Vert_r \leq c^{-2} k_r \Vert \ua(t)\Vert_{r+2}
\]
for some constant $k_r$ independent of $c$.

Thus, for sufficiently large values of $c$, more precisely if
\[
\tau c > 1
\]
and under the same regularity assumption \eqref{eq:urged} we may take instead of \eqref{scheme1} the scheme
\begin{align*}
u_{\ast,c>\tau}^{n+1}  & =\e^{i \tau \Ac}  \mathrm{e}^{-i \tau\frac{1}{8} \big( \vert u_{\ast,c>\tau}^n\vert^2 +2\vert v_{\ast,c>\tau}^n\vert^2 \big)} u_{\ast,c>\tau}^n \\
v_{\ast,c>\tau}^{n+1}  & =\e^{i \tau \Ac}  \mathrm{e}^{-i\tau\frac{1}{8} \big( \vert v_{\ast,c>\tau}^n\vert^2 +2\vert u_{\ast,c>\tau}^n\vert^2 \big)} v_{\ast,c>\tau}^n  
\end{align*}
as a first-order numerical approximation to $(\ua(t_{n+1}),\va(t_{n+1}))$ in \eqref{eq:ua1}.
\end{rem}

However, note that in the strongly non-relativistic limit regime (such that in particular $c \tau > 1$) we may immediately take the Lie splitting scheme proposed in \cite{FS13} as a suitable first-order approximation to \eqref{eq:ua1}  thanks to the following observation:
\begin{rem}[Limit scheme \cite{FS13}]\label{rem:limitLie}
For sufficiently large values of $c$ and sufficiently smooth solutions, more precisely, if
\[
 \Vert z(0) \Vert_{r+2} + \Vert c^{-1}\nab^{-1} z'(0)\Vert_{r+2}\leq M_{2}\quad \text{and}\quad \tau c > 1
\]
 the classical Lie splitting (see  \cite{Lubich08,Faou12}) for the nonlinear Schr\"odinger limit equation \eqref{NLSlimit}, namely,
\begin{equation}\label{limitscheme1}
\begin{aligned}
u_{\ast,\infty}^{n+1}  &=\e^{-i \tau \frac{1}{2}\Delta }  \mathrm{e}^{-i \tau \frac{1}{8}\big( \vert u_{\ast,\infty}^n\vert^2 +2\vert v_{\ast,\infty}^n\vert^2\big) } u_{\ast,\infty}^n\\
v_{\ast,\infty}^{n+1}  &=\e^{-i \tau \frac{1}{2}\Delta }  \mathrm{e}^{-i \tau \frac{1}{8}\big( \vert v_{\ast,\infty}^n\vert^2 +2\vert u_{\ast,\infty}^n\vert^2\big) } v_{\ast,\infty}^n
\end{aligned}
\end{equation}
 yields as a first-order numerical approximation to $(\ua(t_{n+1}),\va(t_{n+1}))$ in \eqref{eq:ua1}.

This assertion follows from \cite{FS13} thanks to the approximation
\begin{align*}
\Vert \ua(t_n) -  u_{\ast,\infty}^{n} \Vert_r & \leq \Vert \ua(t_n) - u_{\ast,\infty}(t_n)\Vert_r  + \Vert u_{\ast,\infty}(t_n) - u_{\ast,\infty}^{n} \Vert_r = \mathcal{O}\big( c^{-1}+\tau\big)
\end{align*}
and the similar bound on $\va(t_n) -  v_{\ast,\infty}^{n}$.
\end{rem}

\section{A second-order uniformly accurate scheme}\label{sec:scheme2}
In this section we derive a second-order exponential-type integration scheme for the solutions $(\ua,\va)$ of \eqref{eq:ua1} which allows \emph{second-order uniform time-convergence with respect to $c$}.  For notational simplicity we  again assume that $z$ is real, which reduces the coupled system \eqref{eq:ua1} to equation  \eqref{eq:ua} with mild-solutions \eqref{du} (see also Remark \ref{rem:realz}).

The construction of the second-order scheme is again based on Duhamel's formula \eqref{du} and the essential estimates in Lemma \ref{lem:bAc}, \ref{lem:expo} and \ref{lem:upc}.  However, the second-order approximation is much more involved due to the fact that 
$$
 \ua'(t) = \mathcal{O}(1), \quad \text{but}\quad  \ua''(t) = \mathcal{O}(c^2).
$$
The latter observation prevents us from simply applying the higher-order Taylor series expansion
$$
\ua(t_n+s) = \ua(t_n) + s \ua'(t_n) + \mathcal{O}\big(s^2 \ua''(t_n+\xi)\big)
$$
in Duhamel's formula \eqref{du} as this would lead to the ``classical'' $c-$dependent error at order $\mathcal{O}(\tau^2 c^2)$. Therefore we need to carry out a much more careful frequency analysis  by iterating Duhamel's formula \eqref{du} twice and controlling the appearing highly-oscillatory terms $\mathrm{e}^{\pm i c^2 t}$ and their interactions $\mathrm{e}^{il c^2 t}$ ($l \in \mathbb{Z}$) precisely.

\subsection{Construction of a second-order uniformly accurate scheme}
In Section \ref{submerge} we state the necessary regularity assumptions on the solution $\ua$ and derive two useful expansions. In Section \ref{sub:PLD} we collect some useful lemmata on highly-oscillatory integrals and their approximations. These approximations will then allow us to construct a uniformly accurate second-order scheme in Section \ref{sec:USDD}. The rigorous convergence analysis is given in Section \ref{sec:convA2}.

\subsubsection{Regularity and expansion of the exact solution}\label{submerge}
In order to derive a second-order scheme, we need to impose additional regularity on the exact solution $\ua(t)$ of \eqref{eq:ua}.
\begin{ass}\label{ass:reg2}
Fix $r>d/2$ and assume that $\ua \in \mathcal{C}([0,T];H^{r+4}(\mathbb{T}^d))$ and in particular
\[
\sup_{0\leq t \leq T} \Vert \ua(t) \Vert_{r+4} \leq M_4 \quad \text{uniformly in $c$}.
\]
\end{ass}
In Lemma  \ref{lem:doubleInt} below we derive two useful expansions of the exact solution $\ua$ of \eqref{eq:ua}. For this purpose we introduce the following definition.
\begin{defn}\label{def:psi} For some function $v$ and $t_n,t \in \mathbb{R}$ we set
\begin{equation}\label{psidef}
\begin{aligned}
  \Psi_{c^2}(t_n,t,v) 
  &:= 
t \e^{2ic^2 t_n} \varphi_1\left(2ic^2t\right) v^3 + 3 t \e^{-2ic^2t_n} \varphi_1\left(-2ic^2 t\right)\vert v \vert^2 \overline{v}+ t \e^{-4ic^2t_n} \varphi_1\left(-4ic^2t\right) \overline{v}^3.
%\\&\frac{1}{2i c^2} \left(\e^{2ic^2(t_n+t)} - \e^{2ic^2 t_n}\right) v^3 + \frac{3}{-2ic^2} \left( \e^{-2ic^2 (t_n+t)} - \e^{-2ic^2 t_n} \right) \vert v\vert^2 \overline{v} \\&+ \frac{1}{-4ic^2}\left(\e^{-4ic^2(t_n+t)}-\e^{-4ic^2 t_n}\right) \overline{v}^3.
\end{aligned}
\end{equation}
\end{defn}
%\begin{rem}
%With the above definition, the first-order scheme \eqref{scheme1} (for $\ua = \va$) may be written  in compact form  as
%\begin{align*}
%\ua^{n+1} = \e^{i \tau \Ac} \Big(\e^{- i \tau \frac{3}{8} \vert \ua^n\vert^2} \ua^n + \tau \frac{3i}{8} 
%\vert \ua^n\vert^2 \ua^n\Big) -  \frac{i}{8} c\nab^{-1} \e^{i \tau \Ac} \Big(\Psi_{c^2}(t_n,\tau,\ua^n) + 3\tau  \vert \ua^n\vert^2 \ua^n
%\Big).
%\end{align*}
%\end{rem}
 The above defintion allows us the following expansions of the exact solution $\ua$.
\begin{lem}\label{lem:doubleInt}
Fix $r>d/2$. Then the exact solution of \eqref{eq:ua} satisfies the expansions
\begin{equation*}
\begin{aligned}
\ua(t_n+s) 
& =  \mathrm{e}^{i s \Ac} \ua(t_n)- \frac{3i}{8}c\nab^{-1} \int_0^s \e^{i(s-\xi)\Ac}
\left \vert \e^{i\xi\Ac}\ua(t_n) \right\vert^2 \left(\e^{i\xi\Ac}\ua(t_n) \right)
  \dd\xi
 \\& - \frac{i}{8}c\nab^{-1}  \Psi_{c^2}(t_n,s,\ua(t_n))+ \mathcal{R}_1(t_n,s, \ua)
\end{aligned}
\end{equation*}
and
\begin{equation*}
\begin{aligned}
\ua(t_n+s) 
& =  \mathrm{e}^{i s \Ac} \ua(t_n)- \frac{i}{8}c\nab^{-1} \Big( 3s 
\left \vert\ua(t_n) \right\vert^2 \ua(t_n) + \Psi_{c^2}(t_n,s,\ua(t_n))\Big)\\&+ \mathcal{R}_2(t_n,s, \ua)
\end{aligned}
\end{equation*}
with $\Psi_{c^2}$ defined in \eqref{psidef} and where the remainders satisfy
\begin{equation}\label{remBdoubleInt}
\begin{aligned}
& \Vert  \mathcal{R}_{1}(t_n,s, \ua)\Vert_r +\Vert  \mathcal{R}_{2}(t_n,s, \ua)\Vert_r \leq s^2 k_{r}(M_2)
\end{aligned}
\end{equation}
for some constant $k_r(M_2)$ which depends on $M_2$, but is independent of $c$.
\end{lem}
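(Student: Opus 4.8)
The plan is to prove both expansions by iterating Duhamel's formula \eqref{du} once, treating the highly-oscillatory phases explicitly via the $\varphi_1$ functions, and then controlling the difference between the two expansions through the smooth part of the integral. The starting point is Duhamel's formula \eqref{du}: I would write $\ua(t_n+s)$ as $\e^{is\Ac}\ua(t_n)$ plus the integral term, and then substitute into the cubic nonlinearity the leading-order approximation $\ua(t_n+\xi) \approx \e^{i\xi\Ac}\ua(t_n)$ coming from the first expansion of Duhamel's formula. The error committed in this substitution is exactly what must be absorbed into the remainder $\mathcal{R}_1$.

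\medskip

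\emph{First expansion.} Expand the cube $\big(\e^{ic^2(t_n+\xi)}\ua(t_n+\xi) + \e^{-ic^2(t_n+\xi)}\overline{\ua}(t_n+\xi)\big)^3$. It produces four frequency contributions carrying phases $\e^{3ic^2(t_n+\xi)}$, $\e^{ic^2(t_n+\xi)}$, $\e^{-ic^2(t_n+\xi)}$, $\e^{-3ic^2(t_n+\xi)}$ with binomial coefficients $1,3,3,1$. After multiplying by the outer phase $\e^{-ic^2(t_n+s)}$ present in \eqref{du}, the surviving net phases become $\e^{2ic^2(t_n+\xi)}$, $\e^{0}$, $\e^{-2ic^2(t_n+\xi)}$, $\e^{-4ic^2(t_n+\xi)}$. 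The non-oscillatory ($\e^0$) term, with coefficient $3$, is precisely the term $-\tfrac{3i}{8}c\nab^{-1}\int_0^s \e^{i(s-\xi)\Ac}|\e^{i\xi\Ac}\ua(t_n)|^2(\e^{i\xi\Ac}\ua(t_n))\,\dd\xi$ once I replace $\ua(t_n+\xi)$ by $\e^{i\xi\Ac}\ua(t_n)$ inside. For the three genuinely oscillatory terms I would \emph{freeze} the slow factor $\ua(t_n+\xi)$ at $\xi=0$ (replacing it by $\ua(t_n)$) and pull the slow operator $\e^{i(s-\xi)\Ac}\approx 1$ outside, so that the remaining $\xi$-integral is a pure exponential integral: $\int_0^s \e^{\pm i l c^2 \xi}\,\dd\xi = s\,\varphi_1(\pm i l c^2 s)$ for $l=2,4$, by Definition \ref{def:phi}. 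Collecting these three terms reproduces exactly $-\tfrac{i}{8}c\nab^{-1}\Psi_{c^2}(t_n,s,\ua(t_n))$ as in \eqref{psidef}.

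\medskip

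\emph{Remainder bound and the second expansion.} The errors entering $\mathcal{R}_1$ are of three types: (i) replacing $\ua(t_n+\xi)$ by $\e^{i\xi\Ac}\ua(t_n)$ inside the nonlinearity, controlled by Lemma \ref{lem:upc}, which gives $\|\ua(t_n+\xi)-\e^{i\xi\Ac}\ua(t_n)\|_r \leq \tfrac12|\xi|\|\ua(t_n)\|_{r+2} + \mathcal{O}(|\xi|)$; (ii) in the oscillatory terms, freezing the slow factor and dropping the operator $\e^{i(s-\xi)\Ac}-1$, controlled by Lemma \ref{lem:expo}; each of these contributes an extra factor $|\xi|$ under the $\xi$-integral over $[0,s]$, hence after integration an overall $s^2$. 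Using $\|c\nab^{-1}\|_r\leq 1$ from \eqref{cnabm} and the algebra property of $H^r$ for $r>d/2$ closes the bound $\|\mathcal{R}_1\|_r \leq s^2 k_r(M_2)$. The second expansion then follows by approximating the remaining smooth integral: $-\tfrac{3i}{8}c\nab^{-1}\int_0^s \e^{i(s-\xi)\Ac}|\e^{i\xi\Ac}\ua(t_n)|^2(\e^{i\xi\Ac}\ua(t_n))\,\dd\xi$ differs from $-\tfrac{3i}{8}c\nab^{-1}\,s\,|\ua(t_n)|^2\ua(t_n)$ by replacing each $\e^{i\xi\Ac}$ (and the outer $\e^{i(s-\xi)\Ac}$) by the identity, which again costs $\mathcal{O}(|\xi|)$ per factor via Lemma \ref{lem:expo} and integrates to $\mathcal{O}(s^2)$; this difference is absorbed into $\mathcal{R}_2$, yielding the same $s^2 k_r(M_2)$ bound.

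\medskip

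\textbf{The main obstacle} I anticipate is the bookkeeping in the oscillatory terms: one must be careful that the phases appearing in $\Psi_{c^2}$ are frozen at $t_n$ (the factor $\e^{\pm i l c^2 t_n}$) while the $\varphi_1$ argument carries the full $\pm i l c^2 s$, and that the errors from freezing the slow amplitude and from the operator $\e^{i(s-\xi)\Ac}-1$ are uniform in $c$ — the latter being the whole point, since naively one would fear a factor $c^2$ from $\ua''$. It is precisely Lemmas \ref{lem:expo} and \ref{lem:upc}, which bound $(\e^{-it\Ac}-1)$ and the increment $\ua(t_n+s)-\ua(t_n)$ in terms of $\Vert\cdot\Vert_{r+2}$ with $c$-independent constants, that prevent this blow-up; the exact integration of the phases via $\varphi_1$ is what avoids the $\tau^2c^2$ error entirely. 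The delicate point is to ensure the constant $k_r(M_2)$ depends only on $M_2=\sup\|\ua\|_{r+2}$ (not $M_4$), which it does because only \emph{first} differences of $\ua$ and the bound \eqref{boundAc} enter, never a second derivative.
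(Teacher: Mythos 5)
Your proposal is correct and follows essentially the same route as the paper's (much terser) proof: expand the cube into its four frequency contributions, replace $\ua(t_n+\xi)$ by $\e^{i\xi\Ac}\ua(t_n)$ (respectively by $\ua(t_n)$ in the oscillatory and fully frozen terms) at an $\mathcal{O}(\xi)$ cost controlled by Duhamel, Lemma \ref{lem:expo} and Lemma \ref{lem:upc}, and integrate the phases $\e^{ilc^2\xi}$ exactly to produce $\Psi_{c^2}$ via $\varphi_1$. The phase bookkeeping, the coefficients $1,3,3,1$, and the observation that only first differences and the bound \eqref{boundAc} enter (so that $k_r$ depends on $M_2$ only, uniformly in $c$) all match the paper's argument.
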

\begin{proof}
Note that by Duhamel's perturbation formula \eqref{du} we have that
\begin{equation}
\begin{aligned}
  \ua(t_n+s) =\mathrm{e}^{i s \Ac} \ua(t_n)
%  \\&=- \frac{i}{8}c\nab^{-1}\int_0^s \e^{i(s-\xi)\Ac} \e^{-ic^2 (t_n+\xi)} \left(  \e^{ic^2 (t_n+\xi)}\ua(t_n+\xi) +  \e^{-ic^2 (t_n+\xi)} \overline{\ua}(t_n+\xi))\right)^3\mathrm{d}\xi
- \frac{i}{8}c\nab^{-1} \int_0^s \e^{i(s-\xi)\Ac}\Big(3 \left\vert \ua(t_n+\xi)\right\vert^2 \ua(t_n+\xi)+ \e^{2ic^2(t_n+\xi)} \ua(t_n+\xi)^3 \\
+ 3 \e^{-2ic^2(t_n+\xi)} \left\vert \ua(t_n+\xi)\right\vert^2 \overline{\ua}(t_n+\xi)+ \e^{-4ic^2(t_n+\xi)} \overline{\ua}(t_n+\xi)^3
\Big)\dd\xi.
\end{aligned}
\end{equation}
Therefore, the bound on $c\nab^{-1}$ given in \eqref{cnabm} in particular implies that for $\xi \in \mathbb{R}$ 
\[
\Vert \ua(t_n+\xi) - \e^{i \xi \Ac} \ua(t_n)\Vert_r \leq \xi k_r (1+M_0)^3
\]
for some constant $k_r$ which is independent of $c$. Together with Lemma \ref{lem:expo} and \ref{lem:upc} the assertion then follows by integrating the highly-oscillatory phases $\mathrm{exp}\left( \pm i l c^2 \xi\right)$ exactly.
\end{proof}

 In the next section we collect some important definitions and useful lemmata on highly-oscillatory integrals.
\subsubsection{Preliminary lemmata on highly-oscillatory integrals}\label{sub:PLD}
 The construction of a second-order approximation to $\ua$ based on the iteration of Duhamel's formula \eqref{du} that holds uniformly in all $c > 0$ leads to interactions of the highly-oscillatory phases $\mathrm{e}^{i c^2 t}$. More precisely, we need to handle highly-oscillatory integrals of type
\begin{equation}\label{inti99}
 \int_0^\tau \e^{i s(\delta c^2- \Ac)}  \left( \e^{i s \Ac} v\right)^l \left( \e^{-i s \Ac} \overline{v}\right)^m \dd s , \qquad \delta \in \{-4, -2, 2\}.
\end{equation}
In order to control these integrals we first need to distinguish the non-resonant case $\delta \in \{-4, -2\}$ where
$$
\forall c > 0,\, k \in \mathbb{N}\, : \quad (\delta c^2 - \Ac)_k = \delta c^2 - c\sqrt{c^2+k^2} + c^2  \neq 0
$$
from the resonant case $\delta = 2$ in which  the operator
$
\delta c^2 - \Ac $
 may become singular.
 
In Lemma \ref{lemI1} we outline how to control the non-resonant case $\delta \in \{-4, -2\}$. Lemma \ref{lemI2}  treats the resonant case $\delta = 2$. 

\begin{lem}\label{lemI1}
Fix $r>d/2$. Then we have for $\delta_1 = -2$ and  $\delta_2 = -4$ that for $j = 1,2$ and $l,m \in \mathbb{N}^*$, 
\begin{equation}
\label{merci}
\begin{aligned}
& \int_0^\tau \e^{i s(\delta_j c^2- \Ac)}  \left( \e^{i s \Ac} v\right)^l \left( \e^{-i s \Ac} \overline{v}\right)^m \dd s  
\\&=\tau \varphi_1\left( i\tau(\delta_j c^2-\Ac)\right)v^l\overline{v}^m + i \tau^2 \varphi_2\left( i\tau(\delta_j c^2-\Ac)\right)\left( l v^{l-1} \overline{v}^m \Ac v -m  v^l \overline{v}^{m-1} \Ac \overline{v}
\right) \\&+  \mathcal{R}(t_n,s, v),
\end{aligned}
\end{equation}
where the remainder satisfies
\begin{equation}\label{lemr1}
\Vert   \mathcal{R}(t_n,s, v)\Vert_{r} \leq k_r \tau^3 \Vert v\Vert_{r+4} \Vert v\Vert_r^{l+m-1}
\end{equation}
for some constant $k_r$ which is independent of $c$.
\end{lem}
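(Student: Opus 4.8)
The plan is to isolate the slowly-varying amplitude $g(s) := \left( \e^{i s \Ac} v\right)^l \left( \e^{-i s \Ac} \overline{v}\right)^m$ from the purely oscillatory operator $\e^{is(\delta_j c^2 - \Ac)}$ multiplying it, and to integrate the oscillation exactly against the first two Taylor coefficients of $g$. The key structural fact I would use is that $\delta_j c^2$ is a scalar commuting with $\Ac$, so that $\e^{is(\delta_j c^2-\Ac)} = \e^{is\delta_j c^2}\e^{-is\Ac}$ is the composition of a unimodular scalar and, by Lemma \ref{lem:expo}, a linear isometry on every $H^\rho(\mathbb{T}^d)$, uniformly in $s$ and $c$; note that the non-resonance of $\delta_j \in \{-2,-4\}$ is not needed for the identity itself but only guarantees the usefulness of the resulting $\varphi_j$-operators later on. I would write the second-order Taylor expansion with integral remainder
\begin{equation*}
g(s) = g(0) + s\, g'(0) + \int_0^s (s-\sigma)\, g''(\sigma)\,\dd\sigma
\end{equation*}
and substitute it into the integral \eqref{inti99}.

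First I would compute the two leading coefficients. Since $\frac{\dd}{\dd s}\e^{\pm is\Ac}v = \pm i\Ac\e^{\pm is\Ac}v$ and the products are taken pointwise, one obtains $g(0) = v^l\overline{v}^m$ and
\begin{equation*}
g'(0) = i\left( l\, v^{l-1}\overline{v}^m\, \Ac v - m\, v^l\overline{v}^{m-1}\, \Ac\overline{v}\right).
\end{equation*}
Using the scalar identities $\int_0^\tau \e^{sz}\dd s = \tau\varphi_1(\tau z)$ and $\int_0^\tau s\,\e^{sz}\dd s = \tau^2\varphi_2(\tau z)$, both immediate from Definition \ref{def:phi} with $z = i(\delta_j c^2-\Ac)$, and pulling the resulting operators $\varphi_1(i\tau(\delta_j c^2-\Ac))$ and $\varphi_2(i\tau(\delta_j c^2-\Ac))$ onto the $s$-independent functions $g(0)$ and $g'(0)$, the first two Taylor terms reproduce exactly the $\tau\varphi_1$ and $i\tau^2\varphi_2$ contributions in \eqref{merci}.

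It remains to bound $\mathcal{R} := \int_0^\tau \e^{is(\delta_j c^2-\Ac)}\int_0^s(s-\sigma)g''(\sigma)\,\dd\sigma\,\dd s$. Since $\e^{is(\delta_j c^2-\Ac)}$ is an isometry on $H^r$, I would estimate $\Vert\mathcal{R}\Vert_r \leq \tfrac{\tau^3}{6}\sup_{0\le\sigma\le\tau}\Vert g''(\sigma)\Vert_r$, so the entire bound reduces to controlling $g''$. Here the main technical point arises: differentiating $g$ twice produces a sum of products of $l+m$ factors in which exactly two derivatives (hence two copies of $\Ac$) are distributed, so each summand carries either one factor of the form $\e^{\pm is\Ac}\Ac^2 v$ or two factors each of the form $\e^{\pm is\Ac}\Ac v$. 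Using that $H^r$ is a normed algebra for $r>d/2$, that $\e^{\pm is\Ac}$ is an isometry on every $H^\rho$ by Lemma \ref{lem:expo}, and that Lemma \ref{lem:bAc} gives $\Vert\Ac^2 v\Vert_r \leq \tfrac14\Vert v\Vert_{r+4}$ together with $\Vert\Ac v\Vert_r\leq\tfrac12\Vert v\Vert_{r+2}$, the first type of summand is bounded by $\Vert v\Vert_{r+4}\Vert v\Vert_r^{l+m-1}$ directly, while for the second type I would invoke the interpolation inequality $\Vert v\Vert_{r+2}^2\leq\Vert v\Vert_r\Vert v\Vert_{r+4}$ to obtain the same bound. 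All constants are independent of $c$ because every estimate on $\Ac$ and on $\e^{is\Ac}$ is; this yields $\Vert g''(\sigma)\Vert_r\leq k_r\Vert v\Vert_{r+4}\Vert v\Vert_r^{l+m-1}$ and hence \eqref{lemr1}. The only place where care is genuinely required is this bookkeeping of $g''$ and the interpolation step, which ensures that the mixed $\Ac\otimes\Ac$ terms do not fall outside the claimed $\Vert v\Vert_{r+4}\Vert v\Vert_r^{l+m-1}$ form.
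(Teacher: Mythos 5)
Your proposal is correct and follows essentially the same route as the paper's proof: Taylor-expand the slowly varying product $\bigl(\e^{is\Ac}v\bigr)^l\bigl(\e^{-is\Ac}\overline{v}\bigr)^m$ to first order in $s$, integrate the oscillatory factor $\e^{is(\delta_j c^2-\Ac)}$ exactly via the definition of $\varphi_1,\varphi_2$, and control the remainder through the uniform bound \eqref{boundAc} on $\Ac$. You merely spell out the second-derivative bookkeeping and the interpolation $\Vert v\Vert_{r+2}^2\leq\Vert v\Vert_r\Vert v\Vert_{r+4}$, which the paper leaves implicit.
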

\begin{proof}
By Taylor series expansion of $\e^{i s \Ac}$ and noting \eqref{boundAc} we obtain that
\begin{equation}\label{inti1}
\begin{aligned}
&\int_0^\tau \e^{-i s \Ac}\e^{i \delta_j c^2 s}   \left( \e^{i s \Ac} v\right)^l \left( \e^{-i s \Ac} \overline{v}\right)^m\dd s  \\
&=  \int_0^\tau \e^{i s ( \delta_j c^2 - \Ac)}
\left(
v^{l} \overline{v}^m + i s \left(l  v^{l-1} \overline{v}^m \Ac v - mv^l \overline{v}^{m-1} \Ac \overline{v}
\right)
\right)
\dd s + \mathcal{R}(t_n,s, v),
\end{aligned}
\end{equation}
where thanks to \eqref{boundAc}  we have for $r>d/2$ that \eqref{lemr1} holds for the remainder. The assertion then follows by the definition of the $\varphi_j$ functions given in Definition \ref{def:phi}.
\end{proof}
 As our numerical scheme will be built on the approximation in \eqref{merci} we need to guarantee that the constructed term
$$
 \tau^2 \varphi_2\left( i\tau(\delta_j c^2-\Ac)\right)\left( l v^{l-1} \overline{v}^m \Ac v -m  v^l \overline{v}^{m-1} \Ac \overline{v}
\right)
$$
is uniformly bounded with respect to $c$ in $H^r$ for all functions $v \in H^r$. This stability analysis is carried out in Remark \ref{remidemi} below, where we in particular exploit the bilinear estimate
\begin{equation}\label{bili}
\textstyle \Vert v w \Vert_r \leq k\, \Vert v \Vert_{r_1} \Vert w \Vert_{r_2} \quad \text{for all } r \leq r_1+r_2-\frac{d}{2} \quad \text{with}\quad
r_1,r_2,-r \neq \frac{d}{2} \quad \text{and}\quad r_1+r_2 \geq 0.
\end{equation}
\begin{rem}[Stability in Lemma \ref{lemI1}]\label{remidemi}
Note that for $\delta_1 = -2$, respectively, $\delta_2 = -4$ we have that
\begin{equation}\label{bbA}
 0 \neq \delta_j c^2 - \Ac  = \delta_j c^2 - c\nab + c^2 = \left\{
\begin{array}{ll}
- (c^2 + c \nab) &\mbox{if} \quad j = 1\\
- (3c^2 + c\nab) & \mbox{if}\quad j = 2
\end{array}
\right. .
\end{equation}
Thanks to \eqref{bbA} which in particular implies that
\begin{align*}
&  \left(\nab\right)_k =  \sqrt{c^2 + \vert k\vert^2} \leq  \sqrt{c^2} + \sqrt{|k|^2} = c + |k| \quad \mbox{and} \quad
\\
& \frac{1}{c^2 + c\left(\nab\right)_k } \leq \mathrm{min}\left\{ |c|^{-2}, |c\sqrt{c^2+k^2}|^{-1}\right\} \leq \mathrm{min}\left\{ |c|^{-2}, (c |k|)^{-1}\right\}
\end{align*}
 we obtain together with the bilinear estimate \eqref{bili}  that for $\delta_j = -2,-4$
\begin{equation}\label{stab21}
\begin{aligned}
& \left\Vert \tau^2 \varphi_2\left(i \tau(\delta_j c^2-\Ac)\right) \left(v \Ac w\right) \right\Vert_r 
 = \tau \left \Vert \frac{\varphi_0(i\tau(\delta_j c^2-\Ac)) - \varphi_1(i\tau(\delta_j c^2-\Ac))}{(\delta_j c^2-\Ac)}  \left(v \Ac w\right)\right\Vert_r
\\&\qquad \leq 2\tau \left
\Vert \frac{1}{(c^2  + c \nab)} \left( v \Ac w \right) \right\Vert_r  \leq 2 \tau 
\left
\Vert \frac{1}{(c^2  + c \nab)} \left( v2 c^2 w \right) \right\Vert_r + 2\tau \left
\Vert \frac{1}{(c^2  + c \nab)} \left( v c \langle \nabla \rangle_0 w \right) \right\Vert_r
\\
&\qquad \leq 4k_r\tau \Vert v \Vert_r \Vert w \Vert_r
\end{aligned}
\end{equation}
for all $r>d/2$ and all functions $v$ and $w$ and some constant $k_r>0$.  The estimate \eqref{stab21} guarantees stability of our numerical  scheme  built  on the approximation in \eqref{merci}.
\end{rem}
 A simple manipulation allows us to treat the resonant case, i.e., $\delta = 2$ in \eqref{inti99}, similarly to Lemma \ref{lemI1}. 
\begin{lem}\label{lemI2}
Fix $r>d/2$ and let $c \neq 0$. Then we have that
\begin{equation}
\begin{aligned}
&\int_0^\tau \e^{i s (2c^2-\Ac)}   \left( \e^{i s \Ac} v\right)^l \left( \e^{-i s \Ac} \overline{v}\right)^m\dd s
=\textstyle \tau \varphi_1\left( i \tau  ( 2c^2 - \frac{1}{2} \Delta) \right) \left(v^l \overline{v}^m\right)
\\&\qquad\textstyle+i  \tau^2 \varphi_2\left( i \tau ( 2c^2 - \frac{1}{2} \Delta)\right)  \Big[
 (\frac12\Delta - \Ac) \left(v^l \overline{v}^m\right) + \left( l v^{l-1} \overline{v}^m \Ac v - m v^l \overline{v}^{m-1} \Ac \overline{v}
\right) 
\Big] \\
& \qquad + \mathcal{R}(t_n,s, v),
\end{aligned}\label{uuli}
\end{equation}
where the remainder satisfies
\begin{equation}\label{lemr2}
\Vert   \mathcal{R}(t_n,s, v)\Vert_{r} \leq k_r \tau^3 \Vert v\Vert_{r+4} \Vert v\Vert_r^{l+m-1}
\end{equation}
for some constant $k_r$ which is independent of $c$.
\end{lem}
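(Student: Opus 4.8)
The plan is to reduce the resonant case $\delta=2$ to the already-established non-resonant machinery of Lemma \ref{lemI1} by a single algebraic manipulation that isolates the genuinely singular part of the operator $2c^2-\Ac$. The key observation is that, unlike the cases $\delta=-2,-4$ where $\delta_j c^2-\Ac$ stays uniformly bounded away from zero (see \eqref{bbA}), here $(2c^2-\Ac)_k = 2c^2 - c\sqrt{c^2+|k|^2}+c^2 = c^2 - c\sqrt{c^2+|k|^2}+c^2$ behaves like $\tfrac12|k|^2$ for large $c$, i.e. it degenerates to the Schr\"odinger operator $-\tfrac12\Delta$. The first step is therefore to write
\begin{equation*}
2c^2-\Ac = \left(2c^2 - \textstyle\frac12\Delta\right) + \left(\textstyle\frac12\Delta - \Ac\right),
\end{equation*}
where the first summand is the ``good'' operator against which the phases $\varphi_1,\varphi_2$ are evaluated, and the second summand $\tfrac12\Delta-\Ac$ is uniformly bounded in $c$ in the sense of Lemma \ref{lem:bAc} (indeed $(\tfrac12\Delta-\Ac)_k = -\tfrac12|k|^2 - c^2 + c\sqrt{c^2+|k|^2}$, which by the estimate $\sqrt{1+x^2}\le 1+\tfrac12 x^2$ is $O(|k|^4/c^2)$ and hence controlled by the $H^{r+4}$-norm).

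The second step mirrors the proof of Lemma \ref{lemI1} exactly. I would Taylor-expand the two factors $\e^{\pm is\Ac}$ to first order, producing
\begin{equation*}
\left(\e^{is\Ac}v\right)^l\left(\e^{-is\Ac}\overline v\right)^m = v^l\overline v^m + is\left(l v^{l-1}\overline v^m \Ac v - m v^l \overline v^{m-1}\Ac\overline v\right) + O(s^2),
\end{equation*}
with the quadratic remainder controlled in $H^r$ by $k_r s^2 \Vert v\Vert_{r+4}\Vert v\Vert_r^{l+m-1}$ using \eqref{boundAc} and the bilinear estimate \eqref{bili}. Substituting this and the splitting of the phase $\e^{is(2c^2-\Ac)}$, one would like to replace $\e^{is(2c^2-\Ac)}$ by $\e^{is(2c^2-\frac12\Delta)}$ inside the integral. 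This replacement is the crux: writing $\e^{is(2c^2-\Ac)} = \e^{is(2c^2-\frac12\Delta)}\e^{is(\frac12\Delta-\Ac)}$ and Taylor-expanding the second factor as $1 + is(\tfrac12\Delta-\Ac)+O(s^2)$ generates precisely the extra term $(\tfrac12\Delta-\Ac)(v^l\overline v^m)$ that appears inside the $\varphi_2$ bracket in \eqref{uuli}. After this substitution the $s$-integrals are against the non-degenerate multiplier $2c^2-\tfrac12\Delta$, so integrating $\int_0^\tau \e^{is(2c^2-\frac12\Delta)}\,\dd s$ and $\int_0^\tau s\,\e^{is(2c^2-\frac12\Delta)}\,\dd s$ reproduces the $\tau\varphi_1$ and $i\tau^2\varphi_2$ terms by Definition \ref{def:phi}.

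The third step is to verify that all error terms collect into a remainder of the asserted order \eqref{lemr2}. There are three sources: the $O(s^2)$ from Taylor-expanding the state factors, the $O(s^2)$ from Taylor-expanding $\e^{is(\frac12\Delta-\Ac)}$, and the cross term where the linear part of one expansion meets the linear part of the other (producing an $s^2\cdot is$ contribution). Each is integrated over $[0,\tau]$ to yield an $O(\tau^3)$ bound; the operators $\tfrac12\Delta-\Ac$ and $\Ac$ each cost two derivatives by \eqref{boundAc}, and the products are estimated via \eqref{bili}, giving the factor $\Vert v\Vert_{r+4}\Vert v\Vert_r^{l+m-1}$ with a $c$-independent constant. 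The main obstacle I anticipate is not any single estimate but the bookkeeping of the $\varphi_2$ bracket: one must be careful that the two distinct linear corrections --- the one from the state Taylor expansion and the one from the phase splitting --- land inside the \emph{same} $\varphi_2\left(i\tau(2c^2-\frac12\Delta)\right)$ factor rather than generating mismatched multipliers, since it is exactly their combination $(\tfrac12\Delta-\Ac)(v^l\overline v^m) + (lv^{l-1}\overline v^m\Ac v - mv^l\overline v^{m-1}\Ac\overline v)$ that must be reproduced uniformly in $c$. The stability of this combined term, analogous to Remark \ref{remidemi}, is what ultimately guarantees the scheme built on \eqref{uuli} remains bounded as $c\to\infty$.
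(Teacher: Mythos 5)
Your proof is correct and follows essentially the same route as the paper: the splitting $2c^2-\Ac = (2c^2-\tfrac12\Delta)+(\tfrac12\Delta-\Ac)$, the first-order Taylor expansion of both $\e^{is(\frac12\Delta-\Ac)}$ and the state factors $\left(\e^{is\Ac}v\right)^l\left(\e^{-is\Ac}\overline v\right)^m$, and the exact integration of the resulting non-degenerate phases via Definition \ref{def:phi} are precisely the paper's argument. The only (inconsequential) imprecision is in your parenthetical on the symbol of $\tfrac12\Delta-\Ac$, where the $O(|k|^4/c^2)$ asymptotic holds only for large $c$; the uniform-in-$c$ control actually used is the crude two-derivative bound from Lemma \ref{lem:bAc} plus the Laplacian, exactly as you also state.
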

\begin{proof}
Note that as
\[
2c^2 - \Ac =\textstyle 2c^2 - \frac{1}{2} \Delta + \frac{1}{2} \Delta-\Ac
\]
we obtain
\begin{equation}
\begin{aligned}\label{int22}
&\int_0^\tau \e^{i s (2c^2-\Ac)}   \left( \e^{i s \Ac} v\right)^l \left( \e^{-i s \Ac} \overline{v}\right)^m\dd s = 
 \int_0^\tau \e^{  i s ( 2c^2 - \frac{1}{2} \Delta)} \e^{i s (\frac{1}{2}\Delta-\Ac)} \left( \e^{i s \Ac} v\right)^l \left( \e^{-i s \Ac} \overline{v}\right)^m\dd s\\
 & =  \int_0^\tau \e^{  i s ( 2c^2 - \frac{1}{2} \Delta)} 
 \Big[ \textstyle\big(1+ i s(\frac12 \Delta-\Ac)\big) \left(v^l\overline{v}^m\right)
+ i s \left( lv^{l-1} \overline{v}^m \Ac v - m v^l \overline{v}^{m-1} \Ac \overline{v}
\right)
 \Big]\dd s+  \mathcal{R}(t_n,s, v),
\end{aligned}
\end{equation}
where thanks to \eqref{boundAc}  we have for $r>d/2$ that \eqref{lemr2} holds for the remainder. The assertion then follows by the definition of the $\varphi_j$ functions given in Definition \ref{def:phi}.
\end{proof}
 Again we need to verify that the constructed term
$$
\textstyle \tau^2 \varphi_2\left( i \tau ( 2c^2 - \frac{1}{2} \Delta)\right)  \Big[
 (\frac12\Delta - \Ac) \left(v^l \overline{v}^m\right) + \left( l v^{l-1} \overline{v}^m \Ac v - m v^l \overline{v}^{m-1} \Ac \overline{v}
\right) 
\Big]
$$
in \eqref{uuli} can be bounded uniformly with respect to $c$ in $H^r$ for all functions $v \in H^r$. This is done in the following remark.
\begin{rem}[Stability in Lemma \ref{lemI2}]
Note that the operator $2c^2-\frac12\Delta $ satisfies the bounds
\[
\frac{ c |k| }{\left(2c^2-\frac12\Delta\right)_k} = \frac{c |k|}{2c^2 + \frac12 \vert k \vert^2} \leq 2, \qquad \frac{ c^2 }{\left(2c^2-\frac12\Delta\right)_k} = \frac{c^2}{2c^2 + \frac12 \vert k \vert^2} \leq \frac12
\]
and furthermore
\[
\left( \Ac \right)_k = c \sqrt{c^2+|k|^2} - c^2 \leq 2c^2+ c |k|.
\]
The above estimates together with the bilinear estimate \eqref{bili} imply that for $r>d/2$ 
\begin{equation}\label{stab22}
\begin{aligned}
& \left \Vert
\textstyle \tau^2 \varphi_2\left( i \tau  ( 2c^2 - \frac{1}{2} \Delta) \right) \left(v \Ac w \right) \right \Vert_r^2 \leq \tau
  \sum_{k} \frac{( 1 + |k|^2)^r}{(2c^2 + \frac12|k|^2)^2} \Big| \sum_{k = k_1 + k_2} v_{k_1} (\Ac)_{k_2} w_{k_2} \Big|^2\\
%& \qquad \leq   \sum_{k} \frac{( 1 + |k|^2)^r}{(2c^2 + \frac12|k|^2)^2} \Big(\sum_{k = k_1 + k_2} |v_{k_1}| (2c^2 + c |k_2|) |w_{k_2}|\Big)^2\\
& \leq \tau  m_r\sum_{k} \frac{( 1 + |k|^2)^r c^4}{(2c^2 + \frac12|k|^2)^2} \Big(\sum_{k = k_1 + k_2} |v_{k_1}| |w_{k_2}|\Big)^2+ \tau  m_r \sum_{k} \frac{( 1 + |k|^2)^rc^2}{(2c^2 + \frac12|k|^2)^2} \Big(\sum_{k = k_1 + k_2} |v_{k_1}| |k_2| |w_{k_2}|\Big)^2
\\
& \leq  \tau  m_r\sum_{k}( 1 + |k|^2)^r  \Big(\sum_{k = k_1 + k_2} |v_{k_1}| |w_{k_2}|\Big)^2+ \tau  m_r \sum_{k} ( 1 + |k|^2)^{r-1} \Big(\sum_{k = k_1 + k_2} |v_{k_1}| |k_2| |w_{k_2}|\Big)^2
\\
& \leq \tau  m_r \Vert v \Vert_r ^2\Vert w \Vert_r^2 + \tau k_r \Vert v \Vert_r^2 \Vert \partial_x w\Vert_{r-1}^2 \leq \tau k
m_r \Vert v \Vert_r^2 \Vert w \Vert_r^2 
\end{aligned}
\end{equation}
for some constant $m_r>0$ which guarantees stability of the numerical method  built on the approximation in Lemma \ref{lemI2}.
\end{rem}
Next we need to analyze  integrals involving the highly-oscillatory function $\Psi_{c^2}$ defined in \eqref{def:psi}. The following lemma yields a uniform approximation.
\begin{lem}\label{intpsiP}
Fix $r>d/2$. Then for any polynomial $p(v)$ in $v$ and $\overline{v}$ we have that
\begin{equation}
\begin{aligned}
& \int_0^\tau \e^{i(\tau-s)\Ac} p\left(\e^{is\Ac}v\right) c\nab^{-1} \Psi_{c^2}(t_n,s,v)\dd s= \tau^2 p(v)c\nab^{-1}\vartheta_{c^2}(t_n,\tau,v) +  \mathcal{R}(t_n,\tau,v)
\end{aligned}
\end{equation}
with
\begin{equation}\label{defpsip}
\begin{aligned}
 \vartheta_{c^2}(t_n,\tau,v) : & = 
\e^{2ic^2 t_n} \frac{\varphi_1\left(2ic^2\tau\right)-1}{2i\tau c^2}  v^3\\& + 3 \e^{-2ic^2t_n}\frac{ \varphi_1\left(-2ic^2 \tau\right)-1}{-2i\tau c^2} \vert v\vert^2 \overline{v}+\e^{-4ic^2t_n}\frac{ \varphi_1\left(-4ic^2\tau\right)-1}{-4i\tau c^2} \overline{v}^3
\end{aligned}
\end{equation}
and where the remainder satisfies
\begin{equation}\label{Rintpsi}
\left \Vert  \mathcal{R}(t_n,\tau,v)\right\Vert_r \leq k_r \tau^3 \left(1+ \Vert v\Vert_{r+2}\right)^5
\end{equation}
for some constant $k_r$ independent of $c$.
\end{lem}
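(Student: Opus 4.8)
The plan is to exploit the exact integrability of the highly-oscillatory function $\Psi_{c^2}$ and to treat the two slowly varying factors $\e^{i(\tau-s)\Ac}$ and $p(\e^{is\Ac}v)$ perturbatively. First I would record the scalar identity $s\varphi_1(ilc^2 s)=\frac{\e^{ilc^2s}-1}{ilc^2}$ for $l=\pm 2,-4$, which is immediate from Definition \ref{def:phi}. Integrating each of the three monomials in $\Psi_{c^2}(t_n,s,v)$ term by term then gives, e.g. for the $l=2$ piece, $\int_0^\tau \frac{\e^{2ic^2 s}-1}{2ic^2}\dd s=\frac{\tau(\varphi_1(2ic^2\tau)-1)}{2ic^2}$, and collecting the three contributions yields the \emph{exact} identity $\int_0^\tau \Psi_{c^2}(t_n,s,v)\dd s=\tau^2\vartheta_{c^2}(t_n,\tau,v)$ with $\vartheta_{c^2}$ as in \eqref{defpsip}. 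Since $\tfrac{\varphi_1(z)-1}{z}=\varphi_2(z)$, the coefficients of $\vartheta_{c^2}$ are precisely $\varphi_2(ilc^2\tau)$ and hence bounded uniformly in $c$, so no amplification occurs in the leading term.

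With this identity in hand I would write the remainder as a sum of two errors by removing the approximations one at a time. Setting $G(s):=p(\e^{is\Ac}v)\,c\nab^{-1}\Psi_{c^2}(t_n,s,v)$ and using $\int_0^\tau p(v)c\nab^{-1}\Psi_{c^2}(t_n,s,v)\dd s=\tau^2 p(v)c\nab^{-1}\vartheta_{c^2}(t_n,\tau,v)$, this gives
\begin{equation*}
\mathcal{R}(t_n,\tau,v)=\int_0^\tau\big(\e^{i(\tau-s)\Ac}-1\big)G(s)\dd s+\int_0^\tau\big(p(\e^{is\Ac}v)-p(v)\big)c\nab^{-1}\Psi_{c^2}(t_n,s,v)\dd s.
\end{equation*}
The first integral replaces the outer propagator by the identity, the second replaces the argument of $p$ by $v$; crucially, both differences carry an explicit factor of order $s$, which is what upgrades the $\mathcal{O}(\tau^2)$ leading term to an $\mathcal{O}(\tau^3)$ remainder.

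To close the estimate the central point is the uniform-in-$c$ bound on $\Psi_{c^2}$: using $|\e^{ilc^2 s}-1|\le |l|c^2|s|$ together with $|\varphi_1(ix)|\le 1$ one gets $|s\varphi_1(ilc^2 s)|\le |s|$, whence $\|c\nab^{-1}\Psi_{c^2}(t_n,s,v)\|_{r'}\le |s|\,k_{r'}\|v\|_{r'}^3$ for $r'=r,r+2$, where I invoke $\|c\nab^{-1}\|_{r'}\le 1$ from \eqref{cnabm} and the algebra property \eqref{bili} on $H^{r'}$. For the first integral, Lemma \ref{lem:expo} bounds $\|(\e^{i(\tau-s)\Ac}-1)G(s)\|_r\le\frac12|\tau-s|\,\|G(s)\|_{r+2}$, and since $\e^{is\Ac}$ is an isometry on $H^{r+2}$, \eqref{bili} gives $\|G(s)\|_{r+2}\le |s|\,k_r(1+\|v\|_{r+2})^{\deg p+3}$; integrating $|\tau-s|\,|s|$ over $[0,\tau]$ produces the $\tau^3$ factor. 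For the second integral, factoring the polynomial difference and applying Lemma \ref{lem:expo} to each factor $\e^{is\Ac}v-v$ yields $\|p(\e^{is\Ac}v)-p(v)\|_r\le |s|\,k_r(1+\|v\|_{r+2})^{\deg p}$, which together with the $|s|$-bound on $c\nab^{-1}\Psi_{c^2}$ again integrates to order $\tau^3$. Summing the two contributions gives \eqref{Rintpsi}, the exponent $5$ reflecting the combined degree of the cubic $\Psi_{c^2}$ and the quadratic $p$ occurring in the construction. The only genuinely delicate point is this uniformity in $c$: a priori the factors $s\varphi_1(ilc^2s)$ threaten an amplification by $c^2$, and it is precisely the oscillatory structure of $\varphi_1$ with purely imaginary argument that keeps them of size $\mathcal{O}(|s|)$; everything else is a routine application of Lemma \ref{lem:expo}, the bound \eqref{cnabm} and the bilinear estimate \eqref{bili}.
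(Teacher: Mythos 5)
Your proposal is correct and follows essentially the same route as the paper: the paper's (very terse) proof likewise freezes the outer propagator and the argument of $p$ using \eqref{approx1} together with the uniform-in-$c$ bound $\Vert \Psi_{c^2}(t_n,s,v)\Vert_{r'} \leq |s|\,k_{r'}\Vert v\Vert_{r'}^3$, and then integrates $\Psi_{c^2}$ exactly to produce $\tau^2\vartheta_{c^2}$. Your explicit two-term decomposition of the remainder and the detailed estimates simply fill in what the paper leaves implicit.
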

\begin{proof}
Thanks to the approximation \eqref{approx1} and the fact that $\Psi_{c^2}(t_n,s,\ua(t_n))$ is of order one in $s$ uniformly in $c$ we have that
\begin{equation*}
\begin{aligned}
& \int_0^\tau \e^{i(\tau-s)\Ac} p\left(\e^{is\Ac}v\right) c\nab^{-1} \Psi_{c^2}(t_n,s,v)\dd s\\
& =p\left(v\right) c\nab^{-1}   \int_0^\tau\Psi_{c^2}(t_n,s,v)\dd s+ \mathcal{R}(t_n,\tau,v),
\end{aligned}
\end{equation*}
where the remainder satisfies for $r>d/2$ the bound \eqref{Rintpsi}.
\end{proof}
 Finally, we need to handle the interaction of highly-oscillatory phases $\mathrm{e}^{ilc^2 t}$ with the highly-oscillatory function  $\Psi_{c^2}$  defined in \eqref{def:psi}. 
\begin{lem}\label{lem:intPsic}
Let $c\neq 0$. Then, we have for $l \in \mathbb{N}$ that
\begin{equation}\label{def:om}
\begin{aligned}
\Omega_{c^2,l}(t_n,\tau,v)& := \frac{1}{\tau^2}\int_0^\tau \e^{i l c^2s} \Psi_{c^2}(t_n,s,v) \dd s\\&= 
\e^{2ic^2 t_n}  \frac{\varphi_1\left( (l+2)ic^2\tau\right) -\varphi_1\left(lic^2\tau\right)}{2i\tau c^2} v^3 \\&+  3  \e^{-2ic^2t_n} \frac{\varphi_1\left((l-2)ic^2\tau\right) -\varphi_1\left(lic^2\tau\right)}{-2i\tau c^2}\vert v \vert^2 \overline{v}\\
 & + \e^{-4ic^2t_n} \frac{\varphi_1\left((l-4)ic^2\tau\right)-\varphi_1\left(lic^2\tau\right)}{-4i\tau c^2} \overline{v}^3
\end{aligned}
\end{equation}
as well as that
\begin{equation*}
\int_0^\tau \e^{i lc^2s} s \dd s = \tau^2 \varphi_2( i l c^2 \tau).
\end{equation*}
\end{lem}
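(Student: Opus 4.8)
The plan is to prove both identities by direct and completely explicit integration, since no analytic estimate is needed here: everything reduces to elementary integrals of exponentials once the prefactor $s$ is absorbed.

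First I would insert the definition \eqref{psidef} of $\Psi_{c^2}$ into the integral defining $\Omega_{c^2,l}$. The key observation is that the factor $s$ multiplying each $\varphi_1$ collapses via Definition \ref{def:phi}, namely
\[
s\,\varphi_1(\mu s) = \frac{\e^{\mu s}-1}{\mu}, \qquad \mu \in \{2ic^2,\,-2ic^2,\,-4ic^2\},
\]
so that $\Psi_{c^2}(t_n,s,v)$ becomes a fixed linear combination, with $s$-independent coefficients, of the three pure exponentials $\e^{2ic^2 s}-1$, $\e^{-2ic^2 s}-1$ and $\e^{-4ic^2 s}-1$, each carrying its monomial $v^3$, $3|v|^2\overline v$, $\overline v^3$ and its phase $\e^{2ic^2 t_n}$, $\e^{-2ic^2 t_n}$, $\e^{-4ic^2 t_n}$.

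Next I would multiply by $\e^{ilc^2 s}$ and integrate term by term over $[0,\tau]$. Multiplication shifts each frequency, so that e.g. the $v^3$-contribution produces $\int_0^\tau(\e^{i(l+2)c^2 s}-\e^{ilc^2 s})\dd s$, while the other two contribute the frequencies $l-2$ and $l-4$ tested against $l$. Each elementary integral is evaluated through
\[
\int_0^\tau \e^{i\beta c^2 s}\dd s = \tau\,\varphi_1(i\beta c^2\tau),
\]
which, by Definition \ref{def:phi} together with $\varphi_1(0)=1$, holds uniformly including the resonant exponents $\beta=0$, where it reduces to $\tau$; this is precisely why the statement may be written with $\varphi_1$-differences valid for every $l\in\mathbb N$. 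Collecting the contributions, dividing by $\tau^2$ and matching coefficients with \eqref{def:om} then yields the claimed formula for $\Omega_{c^2,l}$.

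For the second identity I would use the substitution $s=\tau\theta$ to write $\int_0^\tau \e^{ilc^2 s}\,s\,\dd s = \tau^2\int_0^1 \theta\,\e^{ilc^2\tau\theta}\dd\theta$ and identify the remaining integral with $\varphi_2(ilc^2\tau)$ using the explicit form of $\varphi_2$ in Definition \ref{def:phi}; equivalently, one line of integration by parts gives $\int_0^1\theta\,\e^{z\theta}\dd\theta = \frac{(z-1)\e^z+1}{z^2}=\varphi_2(z)$. There is no genuine analytical obstacle; the only care required is the bookkeeping of the three phases and the uniform-in-$c$ treatment of the possibly resonant exponents $l,\,l\pm 2,\,l-4$ equal to zero, which is handled transparently by the $\varphi_j$-notation since these functions are entire and satisfy $\varphi_1(0)=1$.
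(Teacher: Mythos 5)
Your proposal is correct and follows essentially the same route as the paper's own (very terse) proof: collapse $s\,\varphi_1(\mu s)=(\e^{\mu s}-1)/\mu$ so that $\e^{ilc^2s}\Psi_{c^2}(t_n,s,v)$ becomes a combination of pure exponentials with shifted frequencies $l+2$, $l-2$, $l-4$ against $l$, integrate term by term via $\int_0^\tau \e^{i\beta c^2 s}\dd s=\tau\varphi_1(i\beta c^2\tau)$, and identify the second integral with $\varphi_2$ from Definition \ref{def:phi}. Your write-up is in fact more careful than the paper's, since you explicitly note that $\varphi_1(0)=1$ covers the possibly resonant exponents and you verify the explicit form $\int_0^1\theta\,\e^{z\theta}\dd\theta=\bigl((z-1)\e^z+1\bigr)/z^2=\bigl(\varphi_0(z)-\varphi_1(z)\bigr)/z$ used in the lemma.
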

\begin{proof}
Note that by Definition \ref{def:psi} we have that
\begin{align*}
 \Psi_{c^2}(t_n,s,v) & =  \e^{2 ic^2 t_n} \frac{\e^{(l+2)ic^2s}-\e^{lic^2s}}{2ic^2} v^3 + 3  \e^{-2 ic^2t_n} \frac{\e^{-2 ic^2s}-\e^{lic^2s}}{-2ic^2}\vert v \vert^2 \overline{v}\\
 & + \e^{-4ic^2t_n} \frac{\e^{(l-4)ic^2s}-\e^{lic^2s}}{-4ic^2} \overline{v}^3
\end{align*}
which implies the assertion by Definition \ref{def:phi}  of $\varphi_1$ and $\varphi_2$.
\end{proof}
 With the above lemmata at hand we can commence the construction of the second-order uniformly accurate scheme.
\subsubsection{Uniform second-order discretization of Duhamel's formula}\label{sec:USDD} Our starting point is again Duhamel's perturbation formula (see \eqref{du}) 
\begin{align*}
 \ua(t_n+\tau) &  = \e^{i \tau \Ac} \ua(t_n)\\ & - \frac{i}{8} c \nab^{-1} \int_0^\tau \e^{i(\tau-s) \Ac} \e^{-i c^2 (t_n+s)} \left(
\e^{ic^2(t_n+s)} \ua(t_n+s) + \e^{-ic^2(t_n+s)} \overline{\ua}(t_n+s)\right)^3 \mathrm{d}s\\
\end{align*}
which we split into two parts by separating the linear plus  classical cubic part $|\ua|^2\ua$ from the  terms involving $\ua^3, \overline \ua^3$ and $|\ua|^2\overline \ua$. More precisely, we set
\begin{equation}
\begin{aligned}\label{duha}
 \ua(t_n+\tau) & =  I_{\ast}(\tau,t_n,\ua) - \frac{i}{8} c \nab^{-1} I_{c^{2}}(\tau,t_n,\ua)
 \end{aligned}
 \end{equation}
 with the linear as well as classical cubic part $|\ua|^2\ua$
 \begin{equation}\label{Iast}
 \begin{aligned}
 I_{\ast}(\tau,t_n,\ua):= \e^{i \tau \Ac} \ua(t_n)  - \frac{3i}{8} c \nab^{-1}  \int_0^\tau \e^{i (\tau-s) \Ac}  \vert \ua(t_n+s)\vert^2 \ua(t_n+s) \mathrm{d}s
 \end{aligned}
\end{equation}
 and the terms involving $\ua^3, \overline \ua^3$ and $|\ua|^2\overline \ua$
 \begin{equation}\label{Ic2}
 \begin{aligned}
I_{c^{2}}(\tau,t_n,\ua):= & \int_0^\tau  \e^{i (\tau-s) \Ac} \Big( \e^{2 ic^2 (t_n+s)} \ua^3(t_n+s)  \\&\qquad\qquad + 3 \e^{-2ic^2(t_n+s)} \vert \ua(t_n+s)\vert^2 \overline{\ua}(t_n+s)+ \e^{-4ic^2 (t_n+s)}\overline{\ua}^3(t_n+s)\Big) \mathrm{d}s.
\end{aligned}
\end{equation}
 In order to obtain a second-order uniformly accurate scheme based on the decomposition \eqref{duha} we need to carefully analyze the highly-oscillatory phases in $I_{\ast}(\tau,t_n,\ua)$ and $I_{c^2}(\tau,t_n,\ua)$. We commence with the analysis of $I_{\ast}(\tau,t_n,\ua)$. \\

\emph{1.) First term $I_{\ast}(\tau,t_n,\ua)$:}   By Lemma \ref{lem:doubleInt} we have that
\begin{equation}\label{u18O}
\begin{aligned}
\ua(t_n+s) 
& =  \mathrm{e}^{i s \Ac} \ua(t_n)- \frac{3i}{8}c\nab^{-1} \int_0^s \e^{i(s-\xi)\Ac}
\left \vert \e^{i\xi\Ac}\ua(t_n) \right\vert^2 \left(\e^{i\xi\Ac}\ua(t_n) \right)
  \dd\xi
 \\& - \frac{i}{8}c\nab^{-1}  \Psi_{c^2}(t_n,s,\ua(t_n))+ \mathcal{R}_1(t_n,s, \ua)
\end{aligned}
\end{equation}
with $\Psi_{c^2}$ defined in \eqref{psidef} and where the remainder $\mathcal{R}_1$ is of order $\mathcal{O}(s^2)$ uniformly in $c$. Plugging the approximation \eqref{u18O} into $I_{\ast}(\tau,t_n,\ua)$ defined in \eqref{Iast} yields that 
\begin{equation}
\begin{aligned}\label{Iast1}
 I_{\ast}(\tau,t_n,\ua) & = \e^{i \tau \Ac} \ua(t_n)  - \frac{3i}{8} c \nab^{-1}  \int_0^\tau \e^{i (\tau-s) \Ac}  \vert \ua(t_n+s)\vert^2 \ua(t_n+s) \mathrm{d}s 
\\
 &= \e^{i \tau \Ac} \ua(t_n) - \frac{3i}{8} c \nab^{-1} I_\ast^{1}(\tau,t_n,\ua) \\& +  \frac{3i}{8} c \nab^{-1} \frac{i}{8} \int_0^\tau \e^{i(\tau-s) \Ac}  \Big\{
2  \left \vert \mathrm{e}^{i s \Ac} \ua(t_n)\right\vert^2 c \nab^{-1}  \Psi_{c^2}(t_n,s,\ua(t_n)) \\&
 - \left(\e^{is\Ac} \ua(t_n)\right)^2 c \nab^{-1} \overline{\Psi_{c^2}}(t_n,s,\ua(t_n))
 \Big \}\dd s
 \\&
+\mathcal{R}(\tau,t_n,\ua),
 \end{aligned}
\end{equation}
where we have set
\begin{equation*}\label{iast11}
\begin{aligned}
I_\ast^{1}(\tau,t_n,\ua) &:=  \int_0^\tau \e^{i(\tau-s) \Ac}    \Big\{
 \left \vert \mathrm{e}^{i s \Ac} \ua(t_n)\right\vert^2 \e^{is \Ac} \ua(t_n) \\& \qquad- \frac{3i}{4}  \left \vert \mathrm{e}^{i s \Ac} \ua(t_n)\right\vert^2 c \nab^{-1} \int_0^s \e^{i(s-\xi)\Ac} \vert  \e^{i \xi \Ac}\ua(t_n)\vert^2  \e^{i \xi \Ac}\ua(t_n) \dd \xi\\&\qquad + \frac{3i}{8}\left( \e^{is\Ac} \ua(t_n)\right)^2 c \nab^{-1} \int_0^s \e^{- i(s-\xi)\Ac} \vert  \e^{i \xi \Ac}\ua(t_n)\vert^2 \e^{-i \xi \Ac} \overline{\ua(t_n)} \dd \xi 
 \Big\} \dd s
\end{aligned}
\end{equation*}
and the remainder satisfies
\begin{equation}\label{r3}
\Vert \mathcal{R}(\tau,t_n,\ua)\Vert_r \leq \tau^3 k_{r}(M_4)
\end{equation}
for some constant $k_r(M_4)$ which depends on $M_4$, but is independent of $c$.

 Lemma \ref{intpsiP} allows us to handle the highly-oscillatory integrals involving the function $\Psi_{c^2}$ in \eqref{Iast1}. Thus, in order to obtain a uniform second-order approximation of $ I_{\ast}(\tau,t_n,\ua)$ it remains to derive a suitable second-order approximation to $I_\ast^{1}(\tau,t_n,\ua)$.  

\emph{1.1.) Approximation of  $I_\ast^{1}(\tau,t_n,\ua)$:} The midpoint rule yields the following approximation
\begin{equation}\label{Iastast}
\begin{aligned}
I_\ast^{1}(\tau,t_n,\ua) 
 &  = \tau  \e^{i\frac{\tau}{2}  \Ac} \Big\{
 \left \vert \mathrm{e}^{i \frac{\tau}{2} \Ac} \ua(t_n)\right\vert^2 \e^{i\frac{\tau}{2}  \Ac} \ua(t_n)\\& 
 \qquad - \frac{3i}{4}  \left \vert \mathrm{e}^{i \frac{\tau}{2} \Ac} \ua(t_n)\right\vert^2 c \nab^{-1} \int_0^{\tau/2} \e^{i(\frac{\tau}{2}-\xi)\Ac} \vert  \e^{i \xi \Ac}\ua(t_n)\vert^2  \e^{i \xi \Ac}\ua(t_n) \dd \xi\\&\qquad + \frac{3i}{8}\left( \e^{i\frac{\tau}{2} \Ac} \ua(t_n)\right)^2 c \nab^{-1} \int_0^{\tau/2} \e^{- i(\frac{\tau}{2}-\xi)\Ac} \vert  \e^{i \xi \Ac}\ua(t_n)\vert^2 \e^{-i \xi \Ac} \overline{\ua(t_n)} \dd \xi 
 \Big\} \\&\qquad+\mathcal{R}(\tau,t_n,\ua(t_n)),
\end{aligned}
\end{equation}
where the remainder satisfies thanks to \eqref{boundAc}  and \eqref{cnabm} that
\begin{equation}\label{r4}
\begin{aligned}
\Vert \mathcal{R}(\tau,t_n,\ua(t_n))\Vert_r \leq \tau^3
k_{r}(M_4)
\end{aligned}
\end{equation}
with $k_r$ independent of $c$.

 Next we approximate  the two remaining integrals  in \eqref{Iastast} with the right rectangular rule, i.e.,
\begin{equation}\label{Iast11}
\begin{aligned}
  \int_0^{\tau/2} \e^{ i(\frac{\tau}{2}-\xi)\Ac} \vert  \e^{i \xi \Ac}\ua(t_n)\vert^2  \e^{ i \xi \Ac}\ua(t_n) \dd \xi = \frac{\tau}{2}  \vert  \e^{i\frac{\tau}{2}\Ac}\ua(t_n)\vert^2  \e^{ i \frac{\tau}{2}\Ac}\ua(t_n)+ \mathcal{R}(\tau,t_n,\ua(t_n)),
\end{aligned}
\end{equation}
where the remainder satisfies again thanks to \eqref{boundAc}  that
\begin{equation}\label{r5}
\begin{aligned}
\Vert \mathcal{R}(\tau,t_n,\ua(t_n))\Vert_r \leq \tau^2
k_{r}(M_4)
\end{aligned}
\end{equation}
with $k_r$ independent of $c$.

Plugging \eqref{Iast11} into \eqref{Iastast} yields, with the notation
\begin{align}\label{Usplit}
\U = \e^{i \frac{\tau}{2}\Ac} \ua(t_n)
\end{align}
that
\begin{equation*}
\begin{aligned}
 I_\ast^{1}(\tau,t_n,\ua)  &  =   \e^{i\frac{\tau}{2}  \Ac} \Big\{\tau
 \left \vert \U \right\vert^2 \U\\& 
 \qquad - \frac{\tau^2}{2} \frac{3i}{4}  \left \vert \U\right\vert^2 c \nab^{-1}\vert  \U\vert^2  \U +\frac{\tau^2}{2} \frac{3i}{8}\U^2 c \nab^{-1} \vert  \U\vert^2 \overline{\U} \Big\} \\&
 \qquad+\mathcal{R}(\tau,t_n,\ua(t_n)),
 \end{aligned}
 \end{equation*}
 where thanks to \eqref{r3}, \eqref{r4} and \eqref{r5} the remainder satisfies the bound
$
\Vert \mathcal{R}(\tau,t_n,\ua(t_n))\Vert_r \leq \tau^2
k_{r}(M_4)
$
with $k_r$ independent of $c$.
  
 In order to obtain asymptotic convergence to the classical Strang splitting scheme \eqref{limitscheme2a} associated to the nonlinear Schr\"odinger limit \eqref{NLSlimit}  we add and subtract the term
 $$
  \e^{i\frac{\tau}{2}  \Ac}\frac{\tau^2}{2} \frac{3i}{8} \vert \U\vert^4 \U
 $$
 in the above approximation of  $I_\ast^{1}(\tau,t_n,\ua)$. This yields that
 
 \begin{equation}\label{midpoint}
\begin{aligned}
 & I_\ast^{1}(\tau,t_n,\ua) \\
   &  =   \e^{i\frac{\tau}{2}  \Ac} \Big\{\tau
 \left \vert \U \right\vert^2 \U - \frac{\tau^2}{2} \frac{3i}{8} \vert \U\vert^4 \U
 \\&
 \qquad - \frac{\tau^2}{2} \frac{3i}{4}  \left \vert \U\right\vert^2  \big(c \nab^{-1}-1\big)\vert  \U\vert^2  \U +\frac{\tau^2}{2} \frac{3i}{8}\U^2 \big(c \nab^{-1}-1\big) \vert  \U\vert^2 \overline{\U} \Big\} \\&
 \qquad+\mathcal{R}(\tau,t_n,\ua(t_n)).
\end{aligned}
\end{equation}
 The above decomposition allows us  a second-order approximation of $I_\ast(\tau,t_n,\ua)$ which holds uniformly in all $c$:

\emph{1.2.) Final approximation of  $I_\ast(\tau,t_n,\ua)$:} 
Plugging \eqref{midpoint} into \eqref{Iast1} yields with the aid of Lemma \ref{intpsiP} that
\begin{equation*}\label{IastFin}
\begin{aligned}
& I_\ast(\tau,t_n,\ua) = \e^{i \frac{\tau}{2}\Ac} \Big\{ \U -\frac{3i}{8} \tau
 \left \vert \U \right\vert^2 \U +\left(-\frac{3i}{8}\right)^2 \frac{\tau^2}{2} \vert \U\vert^4 \U\Big\}\\&
- \tau \frac{3i}{8} \Big(c \nab^{-1}-1\Big) \e^{i\frac{\tau}{2}  \Ac}
 \left \vert \U \right\vert^2 \U
+\tau^2 \theta_{c\nab-1}\left(t_n,\tau,\U\right)\\
  &- \tau^2 \frac{3}{32} c\nab^{-1} \left \vert  \ua(t_n)\right\vert^2c\nab^{-1} \vartheta_{c^2}(t_n,\tau,\ua(t_n)) + \tau^2\frac{3}{64}c\nab^{-1} \left( \ua(t_n)\right)^2 c\nab^{-1}\overline{\vartheta_{c^2}}(t_n,\tau,\ua(t_n))\\&
  + \mathcal{R}(\tau,t_n,\ua)
  \end{aligned}
  \end{equation*}
  
  with a remainder $\mathcal{R}$ of order $\mathcal{O}(\tau^3)$ uniformly in $c$.  The Taylor series expansion
  $
\big \vert 1+ x + \frac{x^2}{2} - \e^x \big \vert = \mathcal{O}(x^3)
$
furthermore allows us the following final representation of $I_\ast$:
  \begin{equation}\label{IastFin}
  \begin{aligned}
   I_\ast(\tau,t_n,\ua)   & = \e^{i \frac{\tau}{2}\Ac}\mathrm{exp}\left(-\frac{3i}{8} \tau \vert \U\vert^2\right)\U
\\&
- \tau \frac{3i}{8} \Big(c \nab^{-1}-1\Big) \e^{i\frac{\tau}{2}  \Ac}
 \left \vert \U \right\vert^2 \U
+\tau^2 \theta_{c\nab-1}\left(t_n,\tau,\U\right)\\
  &- \tau^2 \frac{3}{32} c\nab^{-1} \left \vert  \ua(t_n)\right\vert^2c\nab^{-1} \vartheta_{c^2}(t_n,\tau,\ua(t_n)) \\&+ \tau^2\frac{3}{64}c\nab^{-1} \left( \ua(t_n)\right)^2 c\nab^{-1}\overline{\vartheta_{c^2}}(t_n,\tau,\ua(t_n))\\&
  + \mathcal{R}(\tau,t_n,\ua)
\end{aligned}
\end{equation}
with
\begin{equation}\label{def:theta}
\begin{aligned}
& \theta_{c\nab-1}(t_n,\tau,v) : =
- \frac{1}{2} \frac{9}{64}\e^{i\frac{\tau}{2}  \Ac}  \Big(c \nab^{-1}-1\Big) \left \vert v\right\vert^4 v
\\
&
 -\frac{1}{2} \frac{9}{32}  c \nab^{-1}   \e^{i\frac{\tau}{2}  \Ac}\left \vert v\right\vert^2 \Big(c \nab^{-1} - 1 \Big) \vert v\vert^2  v
  +\frac{1}{2} \frac{9}{64}c \nab^{-1} \e^{i\frac{\tau}{2}  \Ac}v^2 \Big(c \nab^{-1} - 1 \Big) \vert  v\vert^2  \overline{v}
 \\
\end{aligned}
\end{equation}
and where $\vartheta_{c^2}$ is defined in \eqref{defpsip} and the remainder $\mathcal{R}(\tau,t_n,\ua)$ satisfies \begin{equation}\label{r6}
\Vert \mathcal{R}(\tau,t_n,\ua(t_n))\Vert_r \leq \tau^3
k_{r}(M_4)
\end{equation}
with $k_r$ independent of $c$.

The approximation of $I_\ast(\tau,t_n,\ua)$ given in \eqref{IastFin} yields the first terms in our numerical scheme. In order to obtain a full approximation to $\ua(t_n+\tau)$ in \eqref{duha} we next derive a second-order approximation to  $I_{c^2}(\tau,t_n,\ua)$.\\

\emph{2.) Second term $I_{c^2}(\tau,t_n,\ua)$:} Applying the second approximation in Lemma \ref{lem:doubleInt} yields together with Lemma \ref{lem:expo}  and by the definition of  $I_{c^2}(\tau,t_n,\ua)$ in \eqref{Ic2} that
\begin{equation*}\label{du2s}
\begin{aligned}
 I_{c^2}(\tau,t_n,\ua)  & = \int_0^\tau \e^{i(\tau-s)\Ac} \Big\{
\e^{2ic^2(t_n+s)} \left(\e^{is\Ac}\ua(t_n)\right)^3 + 3\e^{-2ic^2(t_n+s)} \left\vert\e^{is\Ac}\ua(t_n)\right\vert^2 \e^{-is\Ac}\overline{\ua}(t_n)  \\&+ \e^{-4ic^2(t_n+s)} \left(\e^{-is\Ac}\overline{\ua}(t_n)\right)^3 \Big\}\dd s
\\&+ \int_0^\tau \Big\{-  \frac{3i}{8}  \e^{2ic^2(t_n+s)}\left(\ua(t_n)\right)^2 c\nab^{-1} \Big[ 3 s \vert \ua(t_n)\vert^2 \ua(t_n) + \Psi_{c^2}(t_n,s,\ua(t_n))\Big]\\
& + 3 \e^{-2ic^2(t_n+s)}\Big(- \frac{i}{8}  \left( \overline{\ua}(t_n)\right)^2 c\nab^{-1} \Big[ 3 s \vert \ua(t_n)\vert^2 \ua(t_n) + \Psi_{c^2}(t_n,s,\ua(t_n))\Big]\\
 & + \frac{2 i}{8}  \left\vert\ua(t_n)\right\vert^2 c\nab^{-1} \Big[ 3 s \vert \ua(t_n)\vert^2 \overline{\ua}(t_n) + \overline{\Psi}_{c^2}(t_n,s,\ua(t_n))\Big]
\Big)\\
& +\frac{3i}{8}\e^{-4ic^2(t_n+s)} \left( \overline{\ua}(t_n)\right)^2 c\nab^{-1} \Big[ 3 s \vert \ua(t_n)\vert^2 \overline{\ua}(t_n) + \overline{\Psi}_{c^2}(t_n,s,\ua(t_n))\Big]\Big\} \dd s\\
& + \mathcal{R}(t_n,\tau,\ua)
\end{aligned}
\end{equation*}
with $\Psi_{c^2}$ defined in \eqref{psidef} and where thanks to Lemma \ref{lem:expo}, \ref{lem:doubleInt} and the fact that $\Psi_{c^2}$ is of order one in $s$ uniformly in $c$ the remainder satisfies
$
\Vert \mathcal{R}(\tau,t_n,\ua(t_n))\Vert_r \leq \tau^3
k_{r}(M_4)
$
with $k_r$ independent of $c$.

 Lemma \ref{lemI1}, \ref{lemI2} together with Lemma \ref{lem:intPsic} thus allow us the following expansion of $I_{c^2}$: We have
\begin{equation}\label{Ic2calc}
\begin{aligned}
& I_{c^2}(\tau,t_n,\ua)  = I^1_{c^2}(\tau,t_n,\ua) + \mathcal{R}(t_n,\tau,\ua)
\end{aligned}
\end{equation}
with the highly-oscillatory term 
\begin{equation}\label{IOkti}
\begin{aligned}
&  I^1_{c^2}(\tau,t_n,\ua): =\textstyle \tau \e^{2 i c^2 t_n} \e^{i \tau \Ac}  \varphi_1\left(i \tau ( 2c^2 - \frac{1}{2} \Delta)\right) \ua^3(t_n)\\& \textstyle+ i \tau^2 \e^{2 i c^2 t_n} e^{i \tau \Ac} \varphi_2\left(i \tau ( 2c^2 - \frac{1}{2} \Delta)\right) \Big[
(\frac12\Delta-\Ac)\ua^3(t_n) + 3 \ua^2(t_n) \Ac \ua(t_n)
\Big]\\
&+ 3\tau \e^{-2ic^2t_n} \e^{i \tau \Ac} \varphi_1(i\tau(-2c^2-\Ac))\left\vert\ua(t_n)\right\vert^2 \overline{\ua}(t_n) \\&+ 3 i \tau^2\e^{-2ic^2t_n}   \e^{i \tau \Ac}\varphi_2(i\tau(-2c^2-\Ac)) \Big[ \overline{\ua}^2(t_n)\Ac \ua(t_n) - 2 \vert \ua(t_n)\vert^2 \Ac \overline{\ua}(t_n)
\Big]\\
& + \tau  \e^{-4ic^2t_n} \e^{i \tau \Ac}\varphi_1(i\tau(-4c^2 - \Ac)) \overline{\ua}^3(t_n) - i \tau^2\e^{-4ic^2t_n} \e^{i \tau \Ac}\varphi_2(i\tau(-4c^2-\Ac)) 3 \overline{\ua}^2(t_n) \Ac \overline{\ua}(t_n)\\
& - \tau^2 \frac{3i}{8}  \e^{2ic^2t_n} \left(\ua(t_n)\right)^2 c\nab^{-1}  \Big[3\varphi_2(2ic^2\tau)\vert \ua(t_n)\vert^2 \ua(t_n) + \Omega_{c^2,2,}(t_n,\tau,\ua(t_n)) \Big]\\
& - \tau^2 \frac{3i}{8}  \e^{-2ic^2t_n}  \left( \overline{\ua}(t_n)\right)^2 c\nab^{-1} \Big[3 \varphi_2(-2ic^2\tau) \vert \ua(t_n)\vert^2 \ua(t_n) + \Omega_{c^2,-2}(t_n,\tau,\ua(t_n))\Big] \\
 & + \tau^2  \frac{6 i}{8}  \e^{-2ic^2t_n}  \left\vert \ua(t_n)\right\vert^2 c\nab^{-1}  \Big[3 \varphi_2(-2ic^2\tau) \vert \ua(t_n)\vert^2 \overline{\ua}(t_n)+ \overline{\Omega}_{c^2,-2}(t_n,\tau,\ua(t_n))
 \Big] \\
& +\tau^2 \frac{ 3 i}{8}\e^{-4ic^2t_n} \left( \overline{\ua}(t_n)\right)^2 c\nab^{-1}  \Big[3 \varphi_2(-4ic^2\tau)\vert \ua(t_n)\vert^2 \overline{\ua}(t_n) + \overline{\Omega}_{c^2,-4}(t_n,\tau,\ua(t_n))\Big]
\\
& + \mathcal{R}(t_n,\tau,\ua),\\
\end{aligned}
\end{equation}
where $\Omega_{c^2,l}$ is defined in Lemma \ref{lem:intPsic} and the remainder satisfies
\begin{equation}\label{R9}
\Vert  \mathcal{R}(t_n,\tau,\ua)\Vert_r \leq \tau^3 k_r(M_4),
\end{equation}
with $k_r$ independent of $c$.\\

\emph{3.) Final approximation of $\ua(t_n+\tau)$:}
Plugging \eqref{IastFin} as well as \eqref{Ic2calc} into \eqref{duha} builds the basis of our second-order scheme: As a numerical approximation  to the exact solution $\ua$ at time $t_{n+1}$ we take the second-order uniform accurate exponential-type integrator: $\Un = \e^{i \frac{\tau}{2}\Ac} \ua^n$ and 
\begin{equation} \label{scheme2}
\begin{aligned}
 \ua^{n+1} & =  \mathrm{e}^{i \frac{\tau}{2} \Ac}
\mathrm{e}^{- i \tau \frac{3}{8}  \left \vert \Un \right\vert^2}
\Un \\&
 - \tau \frac{3i}{8} \Big(c \nab^{-1}-1\Big) \e^{i\frac{\tau}{2}  \Ac}
\left \vert \Un\right\vert^2 \Un
+\tau^2 \theta_{c\nab-1}\left(t_n,\tau,\Un \right)\\&- \tau^2 \frac{3}{64} c\nab^{-1} \Big[2 \left \vert  \ua^n\right\vert^2c\nab^{-1} \vartheta_{c^2}(t_n,\tau,\ua^n) - \left( \ua^n\right)^2 c\nab^{-1}\overline{\vartheta_{c^2}}(t_n,\tau,\ua^n)\Big]\\
& - \frac{i}{8} c \nab^{-1} I^1_{c^2}(\tau,t_n,\ua^n),
\end{aligned}
\end{equation}
where $ I^1_{c^2}(\tau,t_n,\ua^n)$ is defined in \eqref{IOkti}  and with $\varphi_1, \varphi_2$ given in Definition \ref{def:phi}, $\theta_{c\nab-1}$ given in \eqref{def:theta}, $\vartheta_{c^2}$ in \eqref{defpsip} and $\Omega_{c^2,l}$ in \eqref{def:om}.

\subsection{Convergence analysis}\label{sec:convA2} The exponential-type integration scheme \eqref{scheme2} converges (by construction) with second-order in time uniformly with respect to $c$.

\begin{thm}[Convergence bound for the second-order scheme]\label{them:con2}
Fix $r>d/2$ and assume that
\begin{equation}\label{eq:urged2}
\Vert z(0)\Vert_{r+4} + \Vert c^{-1}\nab^{-1}z'(0)\Vert_{r+4} \leq M_4
\end{equation}
uniformly in $c$.
For $\ua^n$ defined in \eqref{scheme2} we set
\[
z^n := \frac{1}{2} \left( \e^{ic^2t_n} \ua^n + \e^{-ic^2 t_n} \overline{\ua^n}\right).
\]
Then, there exists a $T_r>0$ and $\tau_0>0$ such that for all $\tau \leq \tau_0$ and $t_n\leq T_r$ we have for all $c >0$ that
\[
\Vert z(t_n)-z^n\Vert_r \leq \tau^2 K_{1,r,M_4} \e^{t_n K_{2,r,M}} \leq \tau^2 K^\ast_{r,M,M_4,t_n},
\]
where the constants $K_{1,r,M_2},K_{2,r,M}$ and $K^\ast_{r,M,M_4,t_n}$  can be chosen independently of $c$.
\end{thm}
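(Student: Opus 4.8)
The plan is to follow the same three-step pattern (local error, stability, discrete Gronwall) employed in the proof of Theorem \ref{them:con1}, now feeding in the second-order expansion derived throughout Section \ref{sec:USDD}. First I would observe, exactly as in Theorem \ref{them:con1}, that the regularity hypothesis \eqref{eq:urged2} on the initial data propagates through Duhamel's formula \eqref{du} to give Assumption \ref{ass:reg2}, namely a bound $\sup_{0\le t\le T_r}\Vert\ua(t)\Vert_{r+4}\le k(M_4)$ uniform in $c$. Writing $\phi^\tau_{\ua}$ for the exact flow of \eqref{eq:ua} and $\Phi^\tau_{\ua}$ for the numerical flow defined by \eqref{scheme2}, I would split the global error telescopically as
\begin{equation*}
\ua(t_{n+1})-\ua^{n+1}=\bigl(\Phi^\tau_{\ua}(\ua(t_n))-\Phi^\tau_{\ua}(\ua^n)\bigr)+\bigl(\phi^\tau_{\ua}(\ua(t_n))-\Phi^\tau_{\ua}(\ua(t_n))\bigr),
\end{equation*}
isolating a stability contribution and a local truncation contribution.

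For the \emph{local error}, the entire construction of Section \ref{sec:USDD} is precisely the assertion that the exact solution satisfies the scheme \eqref{scheme2} up to a remainder of size $\mathcal{O}(\tau^3)$, uniformly in $c$. Concretely, inserting the final representation \eqref{IastFin} of $I_\ast$ and the expansion \eqref{Ic2calc}–\eqref{IOkti} of $I_{c^2}$ into the decomposition \eqref{duha}, and collecting the remainder bounds \eqref{r3}, \eqref{r6} and \eqref{R9}, yields
\begin{equation*}
\Vert\phi^\tau_{\ua}(\ua(t_n))-\Phi^\tau_{\ua}(\ua(t_n))\Vert_r\le\tau^3 k_r(M_4)
\end{equation*}
with $k_r$ independent of $c$; this is where Assumption \ref{ass:reg2} (the four extra derivatives) is consumed, through the bilinear estimate \eqref{bili} applied to the $\Ac$-terms appearing in Lemmas \ref{lemI1} and \ref{lemI2}.

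The \emph{stability bound} is the main obstacle. In contrast to the first-order scheme, \eqref{scheme2} contains the terms $I^1_{c^2}$ from \eqref{IOkti} in which the operator $\Ac$ — of size $c^2$ by \eqref{boundAc} — appears inside $\varphi_2$-weighted products such as $\tau^2\varphi_2(i\tau(\delta_jc^2-\Ac))(v\,\Ac w)$. A naive estimate of these contributions would reproduce exactly the forbidden $\mathcal{O}(\tau^2c^2)$ growth. The point is that the $\varphi_2$ prefactor carries the denominator $\delta_jc^2-\Ac=-(c^2+c\nab)$ in the non-resonant case and the tamed multiplier $2c^2-\tfrac12\Delta$ in the resonant case, so that the estimates \eqref{stab21} and \eqref{stab22} already established in Remark \ref{remidemi} and the remark following Lemma \ref{lemI2} bound each such term by $k_r\tau\Vert v\Vert_r\Vert w\Vert_r$, \emph{uniformly in $c$}. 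Combining these with the isometry property $\Vert\e^{it\Ac}\Vert_r=1$ from \eqref{approx1}, the bound $\Vert c\nab^{-1}\Vert_r\le1$ from \eqref{cnabm}, the uniform bound $\Vert\varphi_1(i\tau c^2 l)\Vert_r\le2$, and the uniform control of $\vartheta_{c^2}$, $\Omega_{c^2,l}$ and $\theta_{c\nab-1}$ provided by Lemmas \ref{intpsiP} and \ref{lem:intPsic}, I would show that $\Phi^\tau_{\ua}$ is Lipschitz on balls of $H^r$ with constant $1+\tau K_{r,M}$, i.e.
\begin{equation*}
\Vert\Phi^\tau_{\ua}(\ua(t_n))-\Phi^\tau_{\ua}(\ua^n)\Vert_r\le(1+\tau K_{r,M})\Vert\ua(t_n)-\ua^n\Vert_r,
\end{equation*}
again with $K_{r,M}$ independent of $c$, as long as $\Vert\ua^n\Vert_r\le2M$.

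Finally, inserting the local and stability bounds into the telescopic splitting and solving the resulting recursion by a discrete Gronwall / bootstrap argument — the bootstrap being needed to certify that the numerical iterates remain in the ball $\Vert\ua^n\Vert_r\le2M$ on $[0,T_r]$ for $\tau\le\tau_0$ — gives $\Vert\ua(t_n)-\ua^n\Vert_r\le\tau^2 K_{1,r,M_4}\e^{t_nK_{2,r,M}}$ uniformly in $c$. The passage back to $z$ is then immediate: since $z$ is real we have $\ua=\va$, and because $\e^{\pm ic^2t_n}$ is multiplication by a scalar of modulus one, the definition of $z^n$ gives $\Vert z(t_n)-z^n\Vert_r\le\Vert\ua(t_n)-\ua^n\Vert_r$, which completes the proof.
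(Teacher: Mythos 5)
Your proposal is correct and follows essentially the same route as the paper's proof: the same telescopic splitting into local error and stability, the local error of order $\tau^3$ read off from the construction in Section \ref{sec:USDD} via the remainder bounds \eqref{r6} and \eqref{R9}, the stability estimate $(1+\tau K_{r,M})$ obtained from \eqref{stab21}, \eqref{stab22} and the uniform bounds on $\varphi_1$, $\vartheta_{c^2}$, $\Omega_{c^2,l}$ and $\theta_{c\nab-1}$, followed by the bootstrap/Gronwall recursion and the passage back to $z$ using $\ua=\va$. Your added commentary on why the $\varphi_2$-weighted $\Ac$-terms do not produce the forbidden $\mathcal{O}(\tau^2c^2)$ growth is exactly the content of Remark \ref{remidemi} and the remark following Lemma \ref{lemI2}.
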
  
\begin{proof}
First note that the regularity assumption on the initial data in \eqref{eq:urged2} implies the regularity Assumption \ref{ass:reg2} on $\ua(t)$, i.e., there exists a $T_r>0$ such that
\[
\sup_{0 \leq t \leq T} \Vert \ua(t)\Vert_{r+4} \leq k(M_4)
\]
for some constant $k$ that depends on $M_4$ and $T_r$, but can be chosen independently of $c$.

In the following let $\phi^t$ denote the exact flow of \eqref{eq:ua}, i.e., $\ua(t_{n+1}) = \phi^\tau(\ua(t_n))$ and let $\Phi^\tau$ denote the numerical flow defined in \eqref{scheme2}, i.e.,
\[
\ua^{n+1} = \Phi^\tau(\ua^n).
\]
Taking the difference of \eqref{du} and \eqref{scheme2} yields that
\begin{equation}\label{glob02}
\begin{aligned}
\ua(t_{n+1}) - \ua^{n+1}& = \phi^\tau(\ua(t_n)) - \Phi^\tau(\ua^n)\\
&= \Phi^\tau(\ua(t_n)) - \Phi^\tau(\ua^n) + 
 \phi^\tau(\ua(t_n))
- \Phi^\tau(\ua(t_n)).
\end{aligned}
\end{equation}

\emph{Local error bound:} With the aid of the expansion  \eqref{IastFin} and \eqref{Ic2calc} we obtain by the representation of the exact solution in \eqref{duha} together with the error bounds \eqref{R9}  and \eqref{r6} that
\begin{equation}\label{local2}
\Vert  \phi^\tau(\ua(t_n))
- \Phi^\tau(\ua(t_n)) \Vert_r = \Vert \mathcal{R}(\tau,t_n,\ua)\Vert_r \leq \tau^3 k_r(M_4)
\end{equation}
for some constant $k_r$ which depends on $M_4$ and $r$, but can be chosen independently of $c$.

\emph{Stability bound:} Note that by the definition of $ \varphi_2$ in Definition \ref{def:phi}, $\theta_{c\nab-1}$ in \eqref{def:theta}, $\vartheta_{c^2}$ in \eqref{defpsip} and $\Omega_{c^2,l}$ in \eqref{def:om} we have for $l=-4,-2,2$ that
\begin{equation}\label{opB}
\begin{aligned}
\tau^2 \Big( \Vert \varphi_2(lic^2 t) (f-g)\Vert_r + \Vert \Omega_{c^2,l}(t_n,\tau,f)- \Omega_{c^2,l}(t_n,\tau,g)\Vert_r + \Vert \vartheta_{c^2}(t_n,\tau,f)-\vartheta_{c^2}(t_n,\tau,g) \Vert_r \Big)\\ \leq \tau k_r\left(\Vert f\Vert_r,\Vert g\Vert_r\right) \Vert f - g\Vert_r
\end{aligned}
\end{equation}
for some constant $k_r$ independent of $c$. Together with the bound \eqref{cnabm}, the definition of $\varphi_1$ in Definition \ref{def:phi} and the stability estimates \eqref{stab21}  and \eqref{stab22} we thus obtain as long as $\Vert \ua(t_n)\Vert_r \leq M$ and $\Vert \ua^n\Vert_R \leq 2M$ that
\begin{equation}\label{stab2}
\Vert \Phi^\tau(\ua(t_n)) - \Phi^\tau(\ua^n) \Vert_r \leq \Vert \ua(t_n) - \ua^n\Vert_r +\tau K_{r,M} \Vert \ua(t_n) - \ua^n\Vert_r,
\end{equation}
where the constant $K_{r,M}$ depends on $r$ and $M$, but can be chosen independently of $c$.

\emph{Global error bound:} Plugging the stability bound \eqref{stab2}  as well as the local error bound \eqref{local2} into \eqref{glob02} yields by a bootstrap argument that
\begin{align}\label{conus2}
\left \Vert \ua(t_{n}) - \ua^{n} \right\Vert_r \leq \tau^2 K_{1,r,M_4} \mathrm{e}^{t_n K_{2,r,M}},
\end{align}
where the constants are uniform in $c$. Note that as $u = v$ we have by \eqref{eq:zuv} and \eqref{psi} that
\begin{align*}
\Vert z(t_n) - z^n\Vert_r & = \textstyle \left \Vert\frac12 \big( u(t_n) + \overline{u(t_n)}\big) -\frac12 \big(\e^{ic^2 t_n} \ua^n + \e^{-ic^2t_n} \overline{\ua^n}\big)\right\Vert\\
&  \leq \Vert \e^{ic^2t_n} (\ua(t_n)-\ua^n)\Vert_r   =  \Vert \ua(t_n)-\ua^n\Vert_r  .
\end{align*}
Together with the bound in \eqref{conus2} this completes the proof.
\end{proof}
\begin{rem}[Fractional convergence and convergence in $L^2$]\label{remFrac}
A fractional convergence result as Theorem \ref{them:con1Frac} for the first-order scheme also holds for the second-order exponential-type integrator \eqref{scheme2}: Fix $r>d/2$ and let $0\leq \gamma \leq 1$. Assume that
\[
\Vert z(0)\Vert_{r+2+2\gamma} + \Vert c^{-1}\nab^{-1}z'(0)\Vert_{r+2+2\gamma} \leq M_{2+2\gamma}.
\]
Then, the scheme \eqref{scheme2} is convergent of order $\tau^{1+\gamma}$ in $H^r$ uniformly with respect to $c$.

Furthermore, for initial values satisfying
\[
\Vert z(0)\Vert_{4} + \Vert c^{-1}\nab^{-1}z'(0)\Vert_{4} \leq M_{4,0}
\]
the  exponential-type integration scheme \eqref{scheme2}  is second-order convergent in $L^2$ uniformly with respect to $c$ by the strategy presented in \cite{Lubich08}.
\end{rem}

In analogy to Remark \ref{rem:limit1} we make the following observation: For sufficiently smooth solutions the exponential-type integration scheme \eqref{scheme2} converges in the limit $c \to \infty$ to the classical Strang splitting of  the corresponding nonlinear Schr\"odinger limit equation \eqref{NLSlimit}.

\begin{rem}[Approximation in the non relativistic limit $c \to \infty$]
The exponential-type integration scheme \eqref{scheme2} corresponds for sufficiently smooth solutions in the limit $\ua^n\stackrel{c\to \infty}{\longrightarrow} u_{\ast,\infty}^n$, essentially to the Strang Splitting (\cite{Lubich08,Faou12})
\begin{equation}\label{limitStrang}
\begin{aligned}
u_{\ast, \infty}^{n+1}  &=\e^{-i \frac{\tau}{2} \frac{\Delta}{2}}  \mathrm{e}^{-i \tau \frac{3}{8} \vert \e^{-i \frac{\tau}{2} \frac{\Delta}{2}}u_{\ast, \infty}^n\vert^2}\e^{-i \frac{\tau}{2} \frac{\Delta}{2}}u_{\ast, \infty}^n,\qquad u_{\ast,\infty}^0 = \varphi - i \gamma,
\end{aligned}
\end{equation}
for the cubic nonlinear Schr\"odinger limit system \eqref{NLSlimit}.

More precisely, the following Lemma holds.
\end{rem}

\begin{lem}\label{rem:limit2} 
Fix $r>d/2$. Assume that
\begin{equation*}
\Vert z(0) \Vert_{r+3} +\Vert c^{-1}\nab^{-1} z'(0)\Vert_{r+3} \leq M_{3}
\end{equation*}
for some $\varepsilon>0$ uniformly in $c$ and let the initial value approximation (there exist functions $\varphi,\gamma$ such that)

\begin{align*}
\Vert z(0)- \gamma\Vert_r + \Vert c^{-1}\nab^{-1} z'(0) - \varphi\Vert_r \leq k_r c^{-1}
\end{align*}
hold for some constant $k_r$ independent of $c$.

Then, there exists a $T>0$ and $\tau_0>0$ such that for all $\tau \leq \tau_0$ the difference of the second-order scheme \eqref{scheme2} for system \eqref{eq:ua} and the Strang splitting \eqref{limitStrang} for the limit Schr\"odinger equation \eqref{NLSlimit} satisfies for $ t_n \leq T$ and all $c >0$ with
\begin{equation*}
\tau c \geq 1
\end{equation*}
that
\[
\Vert \ua^n- u_{\ast, \infty}^{n} \Vert_r  \leq c^{-1} k_r(M_{3},T)
\]
for some constant $k_{r}$ that depends on $M_{3}$ and $T$, but is independent of $c$.
\end{lem}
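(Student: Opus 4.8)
The plan is to mirror the proof of Lemma \ref{rem:limit1}, replacing the Lie map by the Strang map \eqref{limitStrang} and tracking that the additional second-order terms in \eqref{scheme2} all vanish as $c\to\infty$. First I would establish the initial-value approximation $\Vert\ua^0-u_{\ast,\infty}^0\Vert_r\leq k_rc^{-1}$, which follows immediately from the definition of $\ua^0$ in \eqref{scheme2}, of $u_{\ast,\infty}^0$ in \eqref{limitStrang}, and the hypothesis $\Vert z(0)-\gamma\Vert_r+\Vert c^{-1}\nab^{-1}z'(0)-\varphi\Vert_r\leq k_rc^{-1}$. Next I would record the two a priori regularity bounds: the numerical iterates of the full scheme stay bounded in $H^{r+3}$ uniformly in $c$ (by the stability of \eqref{scheme2} in $H^{r+3}$, equivalently the regularity propagation underlying Theorem \ref{them:con2}, using $z(0)\in H^{r+3}$), while the Strang iterates $u_{\ast,\infty}^n$ stay bounded in $H^r$ (by the standard second-order convergence and stability of Strang splitting for \eqref{NLSlimit}). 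As in Lemma \ref{rem:limit1} it is essential that the high-regularity bound is needed only on the numerical branch $\ua^n$, the limit solution entering solely through $H^r$-Lipschitz estimates.

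The heart of the argument is to rewrite one step of \eqref{scheme2} as
\begin{equation*}
\ua^{n+1}=\e^{i\frac{\tau}{2}\Ac}\,\mathrm{e}^{-i\tau\frac{3}{8}|\Un|^2}\,\Un+\mathcal{E}_c(\tau,t_n,\ua^n),\qquad \Un=\e^{i\frac{\tau}{2}\Ac}\ua^n,
\end{equation*}
where $\mathcal{E}_c$ collects every remaining contribution: the $(c\nab^{-1}-1)$-correction, the terms $\tau^2\theta_{c\nab-1}$, the $\vartheta_{c^2}$-terms, and the full highly-oscillatory part $-\tfrac{i}{8}c\nab^{-1}I^1_{c^2}$. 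I would show $\Vert\mathcal{E}_c(\tau,t_n,\ua^n)\Vert_r\leq\tau c^{-1}k_r(\Vert\ua^n\Vert_{r+3})$ term by term, using: $\Vert(c\nab^{-1}-1)f\Vert_r\leq\tfrac12 c^{-2}\Vert f\Vert_{r+2}$ together with the definition \eqref{def:theta} (every summand of $\theta_{c\nab-1}$ carries a factor $c\nab^{-1}-1$); the operator bound $\Vert\tau\varphi_1(ilc^2\tau)\Vert_r\leq 2c^{-2}$ for the $\varphi_1$-parts of $I^1_{c^2}$ as well as for $\vartheta_{c^2}$ and $\Omega_{c^2,l}$; and, for the genuinely delicate $\varphi_2$-terms of $I^1_{c^2}$ that still carry a factor $\Ac$, the arithmetic-geometric bound $\bigl(2c^2+\tfrac12|k|^2\bigr)^{-1}\leq(2c|k|)^{-1}$, which trades one power of $c^{-1}$ against one derivative and yields an $\mathcal{O}(\tau c^{-1})$ estimate through the bilinear inequality \eqref{bili}, exactly as in \eqref{stab21}--\eqref{stab22}. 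Throughout, the constraint $\tau c\geq1$ is used to absorb the stronger $c^{-2}$-contributions into $\tau c^{-1}$ via $c^{-1}\leq\tau$.

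It then remains to compare the main term with the Strang map $\Sigma^\tau(w):=\e^{-i\frac{\tau}{2}\frac{\Delta}{2}}\,\mathrm{e}^{-i\tau\frac{3}{8}|\e^{-i\frac{\tau}{2}\frac{\Delta}{2}}w|^2}\,\e^{-i\frac{\tau}{2}\frac{\Delta}{2}}w$. Since the two maps differ only in their linear half-flows, a telescoping over the three factors together with the bound
\begin{equation*}
\bigl\Vert\bigl(\e^{i\frac{\tau}{2}\Ac}-\e^{-i\frac{\tau}{2}\frac{\Delta}{2}}\bigr)f\bigr\Vert_r\leq\tfrac{\tau}{2}\bigl\Vert(\Ac+\tfrac{\Delta}{2})f\bigr\Vert_r\leq\tfrac{\tau}{2}c^{-1}\Vert f\Vert_{r+3},
\end{equation*}
obtained from \eqref{kater} with $\gamma=\tfrac32$, and the local Lipschitz continuity of the nonlinear exponential, gives $\Vert\e^{i\frac{\tau}{2}\Ac}\mathrm{e}^{-i\tau\frac38|\Un|^2}\Un-\Sigma^\tau(\ua^n)\Vert_r\leq\tau c^{-1}k_r(\Vert\ua^n\Vert_{r+3})$. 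This is precisely the step that forces the regularity level $H^{r+3}$ and hence the appearance of $M_3$. Writing $e^n:=\ua^n-u_{\ast,\infty}^n$ and inserting the stability of $\Sigma^\tau$ in $H^r$ (Lipschitz constant $1+\tau k$ on the relevant ball) yields the recursion $\Vert e^{n+1}\Vert_r\leq(1+\tau k)\Vert e^n\Vert_r+\tau c^{-1}K$; solving it with $\Vert e^0\Vert_r\leq k_rc^{-1}$ produces $\Vert\ua^n-u_{\ast,\infty}^n\Vert_r\leq c^{-1}k_r(M_3,T)$ for $t_n\leq T$, which is the claim.

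The main obstacle I anticipate is the uniform control of $-\tfrac{i}{8}c\nab^{-1}I^1_{c^2}$: the summands carrying a factor $\Ac$ inside $\varphi_2\bigl(i\tau(2c^2-\tfrac12\Delta)\bigr)$ are only $\mathcal{O}(\tau)$-bounded by the raw stability estimate \eqref{stab22}, and extracting the extra $c^{-1}$ requires the arithmetic-geometric refinement above at the cost of one derivative, which must then be reconciled against the $H^{r+3}$ budget already consumed by the comparison of the linear half-flows. Verifying that no single term of $\mathcal{E}_c$ simultaneously exceeds the derivative budget $H^{r+3}$ and decays like $c^{-1}$ is the crux of the computation.
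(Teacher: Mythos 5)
Your proposal is correct and follows essentially the same route as the paper, which simply refers back to the argument of Lemma \ref{rem:limit1} together with the operator decay bounds $\tau\Vert\varphi_j(i\tau(lc^2-\Ac))\Vert_r + \tau\Vert\varphi_j(nic^2\tau)\Vert_r \leq k_r c^{-2}$ and the use of $\tau c\geq 1$ to absorb $c^{-2}$ into $\tau c^{-1}$. Your additional care with the $\varphi_2$-terms carrying a factor $\Ac$ (trading $c^{-1}$ against one derivative as in \eqref{stab21}--\eqref{stab22}) and the flow comparison via $\Vert(\Ac+\tfrac{\Delta}{2})f\Vert_r\leq c^{-1}\Vert f\Vert_{r+3}$ is exactly the bookkeeping the paper leaves implicit, and correctly accounts for the regularity level $M_3$.
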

\begin{proof}
The proof follows the line of argumentation to the proof of Lemma \ref{rem:limit1} by noting that for $l=-4,-2$ and $n=-4,-2,2$
\[
\tau  \Big( \Vert \varphi_j(2i \tau \nab^2) \Vert_r + \Vert \varphi_j\big( i \tau (lc^2 - \Ac)\big)\Vert_r + \Vert \varphi_j( n i c^2 \tau) \Vert_r \Big) \leq k_r c^{-2}
\]
for some constant $k_r$ independent of $c$.
\end{proof}

\subsection{Simplifications in the ``weakly to strongly  non-relativistic limit regime''}\label{sec:limit2}
In the ``weakly to strongly non-relativistic limit regime'', i.e., for large values of $c$, we may again (substantially) simplify the second-order scheme \eqref{scheme2} and nevertheless obtain a well suited, second-order approximation to $\ua(t_n)$ in \eqref{eq:ua}.

\begin{rem}[Limit scheme \cite{FS13}]\label{remarkStrangOk19}
For sufficiently large values of $c$ and sufficiently smooth solutions, more precisely, if \[
 \Vert z(0) \Vert_{r+4} + \Vert c^{-1}\nab^{-1} z'(0)\Vert_{r+4}\leq M_{4}\quad \text{and}\quad \tau c > 1
\]
we may take  instead of \eqref{scheme2}  the classical Strang splitting (see  \cite{Lubich08,Faou12}) for the nonlinear Schr\"odinger limit equation \eqref{NLSlimit}, namely,
\begin{equation}\label{limitscheme2}
\begin{aligned}
u_{\ast,\infty}^{n+1}  &=\e^{-i \frac{\tau}{2} \frac{\Delta}{2} }  \mathrm{e}^{-i \tau \frac{3}{8}\vert \e^{-i \frac{\tau}{2} \frac{\Delta}{2} }  u_{\ast,\infty}^n\vert^2  }\e^{-i \frac{\tau}{2} \frac{\Delta}{2} }  u_{\ast,\infty}^n\\
\end{aligned}
\end{equation}
as a second-order numerical approximation to $\ua(t_n)$ in \eqref{eq:ua}. The assertion follows from \cite{FS13} thanks to the approximation
\begin{align*}
\Vert \ua(t_n) -  u_{\ast,\infty}^{n} \Vert_r & \leq \Vert \ua(t_n) - u_{\ast,\infty}(t_n)\Vert_r  + \Vert u_{\ast,\infty}(t_n) - u_{\ast,\infty}^{n} \Vert_r = \mathcal{O}\big( c^{-2}+\tau^2\big).
\end{align*}
\end{rem}

\section{Numerical experiments}
In this section we numerically confirm first-, respectively, second-order convergence uniformly in $c$ of the exponential-type integration  schemes \eqref{scheme1}  and \eqref{scheme2}. In the numerical experiments we use a standard Fourier pseudospectral method for the space discretization with the largest Fourier mode $K = 2^{10}$ (i.e., the spatial mesh size $\Delta x =  0.0061$) and integrate up to $T = 0.1$. In Figure \ref{fig1} we plot (double logarithmic) the time-step size versus the error measured in a discrete $H^1$ norm of the first-order scheme  \eqref{scheme1} and the second-order scheme \eqref{scheme2} with initial values
\begin{align*}
 & z(0,x) = \frac12 \frac{ \mathrm{cos}(3 x)^2 \mathrm{sin}(2x)}{2-\mathrm{cos}(x)},\qquad \partial_t z(0,x) = c^2 \frac12 \frac{\mathrm{sin}(x)\mathrm{cos}(2x)}{2-\mathrm{cos}(x)}
\end{align*}
for different values $c = 1, 5, 10,  50, 100, 500, 1000, 5000, 10000 .$

\begin{figure}[h!]
\centering
\includegraphics[width=0.5\linewidth]{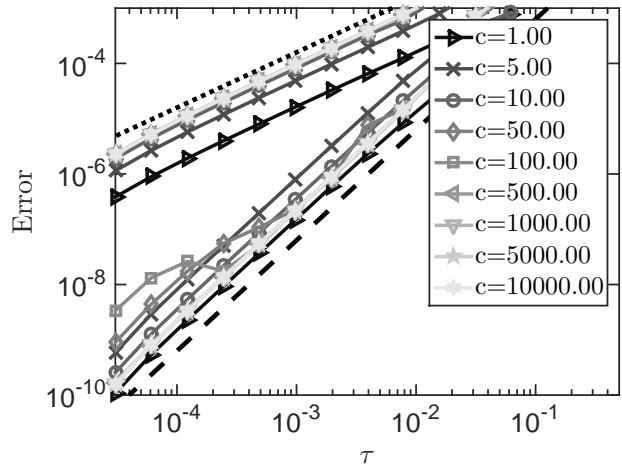}
\caption{Error of the first-, respectively, second-order exponential-type integration scheme \eqref{scheme1} and \eqref{scheme2}. The slope of the dotted and dashed line is one and two, respectively.}\label{fig1}
\end{figure}

\section*{Acknowledgement}
The authors gratefully acknowledge financial support by the Deutsche Forschungsgemeinschaft (DFG) through CRC 1173. This work was also partly supported by the ERC Starting Grant Project GEOPARDI No 279389.


\begin{thebibliography}{99}

\bibitem{BaoZ}
{\rm W. Bao, Y. Cai, X. Zhao},
{\em A uniformly accurate multiscale time integrator pseudospectral method for the Klein-Gordon equation in the nonrelativistic limit regime}, SIAM J. Numer. Anal. 52:2488-2511 (2014) 

\bibitem{BDZ14}
{\rm W. Bao, X. Dong, and X. Zhao}, 
{\em Uniformly accurate multiscale time integrators for highly
oscillatory second order differential equations}, J. Math. Study 47:111--150 (2014)

\bibitem{BG}
{\rm W. Bao, X. Dong},
{\em Analysis and comparison of numerical methods for the Klein-Gordon equation in the nonrelativistic limit regime}, Numer. Math. 120:189--229 (2012)

\bibitem{Bour93}
{\rm J. Bourgain},
{\em Fourier transform restriction phenomena for certain lattice subsets and applications to nonlinear evolution equations. Part I: Schr\"odinger equations.} Geom. Funct. Anal. 3:209--262 (1993).


\bibitem{ChC}
{\rm P. Chartier, N. Crouseilles, M. Lemou and F. M\'ehats},
{\em Uniformly accurate numerical schemes for highly oscillatory Klein-Gordon and nonlinear Schr\"odingier equations}, Numer. Math. 129:211-250 (2015)

\bibitem{CoHaLu03}
{D. Cohen, E. Hairer, C. Lubich}, 
{\em Modulated Fourier expansions of highly oscillatory differential equations}, Foundations of Comput. Math. 3:327--345 (2003)

\bibitem{EFHI09}
{\rm B. Engquist, A. Fokas, E. Hairer, A. Iserles},
{\em Highly Oscillatory Problems}. Cambridge University Press 2009

\bibitem{Faou12}
{\rm E. Faou},
{\em Geometric numerical integration and Schr\"odinger equations}, European Math. Soc 2012

\bibitem{FS13}
{\rm E. Faou, K. Schratz},
{\em Asymptotic preserving schemes for the Klein-Gordon equation in the non-relativistic limit regime},	Numer. Math. 126:441-469 (2014)

\bibitem{Gau15}
{\rm L. Gauckler},
{\em Error analysis of trigonometric integrators for seimilinear wave equations},
SIAM J. Numer. Anal. 53:1082--1106 (2015)

\bibitem{HLW}
{\rm E. Hairer, C. Lubich, G. Wanner},
{\em Geometric Numerical Integration. Structure-Preserving Algorithms for Ordinary Differential Equations}. Second Edition. Springer 2006

\bibitem{HoLu99}
{\rm M. Hochbruck, C. Lubich},
{\em A Gautschi-type method for oscillatory second-order differential equations}, Numer. Math. 83:403--426 (1999)

\bibitem{HochOst10}
{\rm M. Hochbruck, A. Ostermann},
{\em Exponential integrators.}  Acta Numer. 19:209--286 (2010)

\bibitem{HS16}
{\rm M. Hofmanova, K. Schratz},
{\em An exponential-type integrator for the KdV equation.} arxiv.org:1601.05311 (Preprint 2016)


\bibitem{Law67}
{\rm J. D. Lawson},
{\em Generalized Runge--Kutta processes for stable systems with large Lipschitz constants.} SIAM J. Numer. Anal. 4:372--380 (1967).

\bibitem{LJL05}
{\rm K. Lorenz, T. Jahnke, C. Lubich},
{\em Adiabatic integrators for highly-oscillatory second-order linear differential equations with time-varying eigendecomposition}, BIT 45:91--115 (2005)


\bibitem{Lubich08}
{\rm C. Lubich}, {\em On splitting methods for {S}chr\"odinger-{P}oisson and cubic nonlinear {S}chr\"odinger
equations}, Math. Comp. 77:2141-2153 (2008)

\bibitem{MaNak02}
{\rm N. Masmoudi, K. Nakanishi},
{\em From nonlinear Klein-Gordon equation to a system of coupled nonlinear Schr\"odinger equations}, Math. Ann. 324: 359--389 (2002)

\bibitem{OS16}
{\rm A. Ostermann, K. Schratz},
{\em Low regularity exponential-type integrators for semilinear Schr\"odinger equations in the energy space}, arxiv.org:1603.07746  (Preprint 2016)


\bibitem{StVaz78}
{\rm W. Strauss, L. Vazquez},
{\em Numerical solution of a nonlinear Klein-Gordon equation}, Journal of Computational Physics 28:271--278 (1978)

\end{thebibliography}
\end{document}